\numberwithin{equation}{section}
\newcommand{\lVERT}{\lvert\kern-0.25ex\lvert\kern-0.25ex\lvert}
\newcommand{\rVERT}{\rvert\kern-0.25ex\rvert\kern-0.25ex\rvert}
\DeclareMathSymbol{\Gamma}{\mathalpha}{letters}{"00}
\DeclareMathSymbol{\Delta}{\mathalpha}{letters}{"01}
\DeclareMathSymbol{\Theta}{\mathalpha}{letters}{"02}
\DeclareMathSymbol{\Lambda}{\mathalpha}{letters}{"03}
\DeclareMathSymbol{\Xi}{\mathalpha}{letters}{"04}
\DeclareMathSymbol{\Pi}{\mathalpha}{letters}{"05}
\DeclareMathSymbol{\Sigma}{\mathalpha}{letters}{"06}
\DeclareMathSymbol{\Upsilon}{\mathalpha}{letters}{"07}
\DeclareMathSymbol{\Phi}{\mathalpha}{letters}{"08}
\DeclareMathSymbol{\Psi}{\mathalpha}{letters}{"09}
\DeclareMathSymbol{\Omega}{\mathalpha}{letters}{"0A}
\DeclareMathSymbol{\varGamma}{\mathalpha}{operators}{"00}
\DeclareMathSymbol{\varDelta}{\mathalpha}{operators}{"01}
\DeclareMathSymbol{\varTheta}{\mathalpha}{operators}{"02}
\DeclareMathSymbol{\varLambda}{\mathalpha}{operators}{"03}
\DeclareMathSymbol{\varXi}{\mathalpha}{operators}{"04}
\DeclareMathSymbol{\varPi}{\mathalpha}{operators}{"05}
\DeclareMathSymbol{\varSigma}{\mathalpha}{operators}{"06}
\DeclareMathSymbol{\varUpsilon}{\mathalpha}{operators}{"07}
\DeclareMathSymbol{\varPhi}{\mathalpha}{operators}{"08}
\DeclareMathSymbol{\varPsi}{\mathalpha}{operators}{"09}
\DeclareMathSymbol{\varOmega}{\mathalpha}{operators}{"0A}
\newcommand{\allmodesymb}[2]{\relax\ifmmode{\mathchoice
{\mbox{\fontsize{\tf@size}{\tf@size}#1{#2}}}
{\mbox{\fontsize{\tf@size}{\tf@size}#1{#2}}}
{\mbox{\fontsize{\sf@size}{\sf@size}#1{#2}}}
{\mbox{\fontsize{\ssf@size}{\ssf@size}#1{#2}}}}
\else
\mbox{#1{#2}}\fi}
\renewcommand{\Delta}{\varDelta}
\renewcommand{\Lambda}{\varLambda}
\renewcommand{\Omega}{\varOmega}
\renewcommand{\Pi}{\varPi}
\renewcommand{\Phi}{\varPhi}
\renewcommand{\a}{\alpha}
\newcommand{\bo}[1]{\mathbf{#1}} %% BOLD
\renewcommand{\rm}[1]{\mathrm{#1}} %% ROMAN
\newcommand{\llb}{\llbracket}
\newcommand{\rrb}{\rrbracket}
\newcommand{\lla}{\lbrace}
\newcommand{\rra}{\rbrace}
\newcommand{\llc}{\left\langle}
\newcommand{\rrc}[1][n]{\right\rangle_{#1}}
\newcommand{\lld}{\llparenthesis}
\newcommand{\rrd}[1][n]{\rrparenthesis_{#1}}
\renewcommand{\d}{\rm{d}}
\newcommand{\e}{\rm{e}}
\newcommand{\im}{\rm{i}}
\newcommand{\eps}{\varepsilon}
\newcommand{\abs}[1]{\lvert#1\rvert} %% ABSOLUTE VALUES
\newcommand{\tends}{\rightarrow}
\newcommand{\norm}[1]{\lVert#1\rVert} %% NORMS
\newcommand{\p}{\partial}
\newcommand{\half}{\tfrac{1}{2}}
\DeclareMathOperator{\diam}{diam}
\DeclareMathOperator{\card}{card}
\DeclareMathOperator{\Div}{div}
\DeclareMathOperator{\Trace}{Tr}
\DeclareMathOperator{\Dim}{dim}
\DeclareMathOperator{\DivT}{\Div_{\rm T}}
\DeclareMathOperator{\nablaT}{\nabla_{\rm T}}
\DeclareMathOperator{\supp}{supp}
\newcommand{\eval}[2]{ #1\rvert_{#2}}
\newcommand{\pair}[2]{\langle #1,#2 \rangle}
\renewcommand{\dim}{d}
\newcommand{\Rd}{\mathbb{R}^{\dim}}
\newcommand{\R}{\mathbb{R}}
\newcommand{\N}{\mathbb{N}}
\newcommand{\Z}{\mathbb{Z}}
\newcommand{\C}{\mathbb{C}}
\newcommand{\calA}{\mathcal{A}}
\newcommand{\calF}{\mathcal{F}}
\newcommand{\calT}{\mathcal{T}}
\newcommand{\calP}{\mathcal{P}}
\newcommand{\calQ}{\mathcal{Q}}
\newcommand{\It}{\mathcal{J}_\tau}
\newcommand{\Ld}{\Lambda}
\newcommand{\Om}{\Omega}
\newcommand{\DO}{\partial\Om}
\newcommand{\Ob}{\overline{\Omega}}
\newcommand{\Jo}{J_h}
\newcommand{\Blambda}{B_{h,*}}
\newcommand{\Bdgr}{B_{h,\theta}}
\newcommand{\Bdg}{B_{h,1/2}}
\newcommand{\normdgr}[1]{\norm{#1}_{h,\theta}}
\newcommand{\normah}[1]{\norm{#1}_{a_h}}
\newcommand{\cstab}{c_{\rm{s}}}
\newcommand{\norme}[1]{\norm{#1}_{h}}
\newcommand{\normx}[1]{{\lVERT#1\rVERT}_h}
\newcommand{\absJ}[1]{\abs{#1}_{\rm{J}}}
\newcommand{\normH}[1]{\norm{#1}_{H}}
\newcommand{\ch}{c_{\calT}}
\newcommand{\cp}{c_{\calP}}
\newcommand{\Adg}{A_{h}}
\newcommand{\Vh}{V_{h,\bo p}}
\newcommand{\Vt}{V^{\tau,\bo q}}
\newcommand{\Vth}{V_{h,\bo p}^{\tau, \bo q}}
\newcommand{\HIO}{H(I;\Om)}
\newcommand{\calFhib}{\calF_h^{i,b}}
\newcommand{\Fg}{F_\gamma}
\newcommand{\Ll}{L_\lambda}
\newcommand{\Lw}{L_\omega}
\newcommand{\Cdg}{C_h}
\newcommand{\SIn}{\sum_{n=1}^N \int_{I_n}}
\newcommand{\SIN}{\sum_{n=1}^N\int\limits_{I_n}}
\newcommand{\SK}{\sum_{K\in\calT_h}}
\newcommand{\CFh}{C^{\calF}_h}
\newcommand{\pj}{\Pi} %Projector command.
\newcommand{\pmp}{\pj^{{\bo m, \bo p}}_h}
\newcommand{\php}{\pj^{\bo p}_h}
\newcommand{\prq}{\pj^{{\bo r, \bo q}}_{\tau}}
\newcommand{\ptau}{\pj^{r,q}_\tau}
\newcommand{\pq}{\pj^{\bo q}_{\tau}}
\newcommand{\hf}{\phi} %Hat function commands
\newcommand{\Xell}{X_\ell}
\newcommand{\dofx}{\mathrm{DoF}_x}
\newcommand{\doft}{\mathrm{DoF}_\tau}
\title{Discontinuous Galerkin finite element methods for time-dependent Hamilton--Jacobi--Bellman equations with Cordes coefficients}
\author{Iain~Smears \and Endre~S\"{u}li}
\institute{Mathematical Institute, University of Oxford, \email{smears@maths.ox.ac.uk}, \and  \email{suli@maths.ox.ac.uk}}
\date{\today}
\titlerunning{DGFEM FOR PARABOLIC HJB EQUATIONS}
\authorrunning{I.~SMEARS \& E.~S\"ULI}
\begin{document}

\maketitle 

\begin{abstract}
We propose and analyse a fully-discrete discontinuous Galerkin time-stepping method for parabolic Hamilton--Jacobi--Bellman equations with Cordes coefficients. The method is consistent and unconditionally stable on rather general unstructured meshes and time-partitions. Error bounds are obtained for both rough and regular solutions, and it is shown that for sufficiently smooth solutions, the method is arbitrarily high-order with optimal convergence rates with respect to the mesh size, time-interval length and temporal polynomial degree, and possibly suboptimal by an order and a half in the spatial polynomial degree. Numerical experiments on problems with strongly anisotropic diffusion coefficients and early-time singularities demonstrate the accuracy and computational efficiency of the method, with exponential convergence rates under combined $hp$- and $\tau q$-refinement.
\keywords{Fully nonlinear partial differential equations \and Hamilton--Jacobi--Bellman equations \and $hp$-version discontinuous Galerkin methods \and Cordes condition}
\subclass{65N30 \and 65N12 \and 65N15 \and 35K10 \and 35K55 \and 35D35}
\end{abstract}

\section{Introduction}\label{sec:introduction}

We consider the numerical analysis of the Cauchy--Dirichlet problem for Hamilton--Jacobi--Bellman (HJB) equations of the form
\begin{equation}\label{eq:HJB_intro}
\p_t w - \sup_{\a\in\Ld}[L^\a u - f^\a]=0\qquad \text{in }\Om\times I,
\end{equation}
where $\Om\subset \R^\dim$ is a bounded convex domain, $I=(0,T)$, $\Ld$ is a compact metric space, and where the $L^\a$ are nondivergence form elliptic operators given by
\begin{equation}\label{eq:Lalpha_intro}
L^\a v \coloneqq  a^\a:D^2 v + b^\a\cdot \nabla v - c^\a v,\qquad \a\in\Ld.
\end{equation}
HJB equations of the form \eqref{eq:HJB_intro} arise from problems of optimal control of stochastic processes over a finite-time horizon \cite{Fleming2006}. Note that the specific form of the HJB equation in \eqref{eq:HJB_intro} is obtained after reversing the time variable of the control problem, and thus it will be considered along with an initial-time Cauchy condition and a lateral Dirichlet boundary condition. 
\hyphenation{multi-dimensional}
We are interested in consistent, stable and high-order methods for multidimensional HJB equations with uniformly elliptic but possibly strongly anisotropic diffusion coefficients. Moreover, the results of this work are applicable to other forms of HJB equations, such as the case where the supremum is replaced by an infimum in \eqref{eq:HJB_intro}, and also to equations of Bellman--Isaacs type from stochastic differential games.

Monotone schemes, which conserve the maximum principle in the discrete setting, represent a significant class of numerical methods for \eqref{eq:HJB_intro} and are supported by a general convergence theory by Barles and Souganidis \cite{Barles1991}. Since the history and early literature of these methods is discussed for example in \cite{Fleming2006,Kushner1990} or in the introduction of \cite{Jensen2013}, we mention here only some recent developments. Building on earlier works such as \cite{Camilli1995,Crandall1996}, Debrabant and Jakobsen developed in \cite{Debrabant2013} a semi-Lagrangian framework for constructing wide-stencil monotone finite difference schemes for HJB and Bellman--Isaacs equations. Uniform convergence to the viscosity solution of monotone finite element methods was shown by Jensen and the first author in \cite{Jensen2013} through an extension of the Barles--Souganidis framework, along with strong convergence results in $L^2(H^1)$ under nondegeneracy assumptions.
 
An alternative approach to the numerical solution of HJB equations was proposed in \cite{Smears2013,Smears2014}, based on the Cordes condition which comes from the study of nondivergence form elliptic and parabolic equations with discontinuous coefficients \cite{Cordes1956,Maugeri2000}. The Cordes condition is an algebraic assumption on the coefficients of the operators $L^\a$; it is well-suited for numerical analysis since the techniques of analysis of the continuous problem can be extended to the discrete setting.
Moreover, HJB equations are connected to the Cordes condition through the fact that linearisations of the nonlinear operator are nondivergence form operators with discontinuous coefficients.
Unlike their divergence form counterparts, linear nondivergence form equations with discontinuous coefficients are generally ill-posed, even under uniform ellipticity or parabolicity conditions \cite{Gilbarg2001,Maugeri2000}; however, well-posedness is recovered under the Cordes condition \cite{Maugeri2000}.
In fact, as first shown in \cite{Smears2014}, the Cordes condition permits a straightforward proof of existence and uniqueness in $H^2$ of the solution of a fully nonlinear elliptic HJB equation.

The discretisation of linear nondivergence form elliptic equations by $hp$-version discontinuous Galerkin finite element methods (DGFEM) was first considered in \cite{Smears2013}.
There, the stability of the numerical method was achieved through the Cordes condition and the key ideas of testing the equation with $\Delta v_h$, where $v_h$ is a test function from the finite element space, and of weakly enforcing an important integration by parts identity connected to the Miranda--Talenti Inequality.
An $hp$-version DGFEM for elliptic HJB equations was then proposed in \cite{Smears2014} along with a full theoretical analysis in terms of consistency, stability and convergence. The accuracy and efficiency of the method was demonstrated through numerical experiments for a range of challenging problems, including boundary layers, corner singularities and strongly anisotropic diffusion coefficients.

This work extends our previous results to parabolic HJB equations by combining the spatial discretisation of \cite{Smears2014} with a discontinuous Galerkin (DG) time-stepping scheme \cite{Thomee2006}. The resulting method is consistent, unconditionally stable and arbitrarily high-order, whilst permitting rather general unstructured meshes and time partitions.
Although other time-stepping schemes could be considered, Sch\"otzau and Schwab showed in \cite{Schotzau2000} that a key feature of DG time-stepping methods is the potential for exponential convergence rates, even for solutions with limited regularity; our numerical experiments below show that our method retains this quality.

In order to treat the nonlinearity of the HJB operator, the time-stepping scheme is nonstandard and leads to strong control of a discrete $H^1(L^2)\cap L^2(H^2)$-type norm.
The consistency and good stability properties of the resulting method lead to optimal convergence rates in terms of the mesh size $h$, time-interval length $\tau$, and temporal polynomial degrees $q$. The rates in the spatial polynomial degrees $p$ are possibly suboptimal by an order and a half, as is common for DGFEM that are stable in discrete $H^2$-norms, such as DGFEM for biharmonic equations \cite{Mozolevski2007}. In addition to error bounds for regular solutions, we use Cl\'ement-type projection operators to obtain bounds under very weak regularity assumptions that are in particular applicable to problems with early-time singularities induced by the initial datum.

The contributions of this paper are as follows. In section~\ref{sec:problem_analysis}, we define the problem under consideration and show its well-posedness. Then, in section~\ref{sec:temporal}, we introduce the essential ideas of the time-stepping scheme in a semidiscrete context and show its stability. Full discretisation in space and time is considered in sections~\ref{sec:fem} and \ref{sec:numerical_scheme}, where we show the method's consistency. Stability and well-posedness of the scheme are then obtained in section~\ref{sec:stability} and error bounds are derived in section~\ref{sec:error_analysis}. The results of numerical experiments are reported in section~\ref{sec:numexp}.

\section{Analysis of the problem}\label{sec:problem_analysis}
Let $\Om$ be a bounded convex polytopal open set in $\Rd$, $\dim\geq 2$, let $\Ld$ be a compact metric space, and let $I\coloneqq (0,T)$, with $T>0$. It is assumed that $\Om$ and $\Ld$ are non-empty. Convexity of $\Om$ implies that the boundary $\DO$ of $\Om$ is Lipschitz \cite{Grisvard2011}. Let the real-valued functions $a_{ij}$, $b_i$, $c$ and $f$ belong to $C\left(\Ob\times\overline{I}\times\Ld\right)$ for each $i,\,j \in \{1,\dots,\dim\}$. For each $\a\in\Ld$, define the functions $a_{ij}^\a\colon (x,t)\mapsto a_{ij}(x,t,\a)$, where $(x,t)\in\Ob\times\overline{I}$ and $i,\,j \in \{1,\dots,\dim\}$; the functions $b_i^\a$, $c^\a$ and $f^\a$ are similarly defined. We introduce the matrix functions $a^\a\coloneqq (a_{ij}^\a)$ and the vector functions $b^\a\coloneqq (b^\a_i)$ for notational convenience. The operators $L^\a \colon L^2(I;H^2(\Om))\tends L^2(I;L^2(\Om))$ are given by
\begin{equation}
L^\a v \coloneqq  a^\a:D^2 v + b^\a\cdot \nabla v - c^\a v, \quad v\in L^2(I;H^2(\Om)),\;\a\in\Ld,
\end{equation}
where $D^2 v$ denotes the Hessian matrix of $v$.
Compactness of $\Ld$ and continuity of the functions $a$, $b$, $c$ and $f$ imply that the fully nonlinear operator $F$, given by
\begin{equation}
F\colon v\mapsto F[v]\coloneqq \p_t v - \sup_{\a\in\Ld}\left[ L^\a v - f^\a \right] = \inf_{\a\in\Ld}\left[\p_t v - L^\a v + f^\a \right],
\end{equation}
is well-defined as a mapping from $\HIO\coloneqq L^2(I;H^2(\Om)\cap H^1_0(\Om))\cap H^1(I;L^2(\Om))$ into $L^2(I;L^2(\Om))$.
The problem considered is to find a function $u\in \HIO$ that is a strong solution of the parabolic HJB equation subject to  Cauchy--Dirichlet boundary conditions:
\begin{equation}\label{eq:HJB}
\begin{aligned}
F[u] &= 0 & &\text{in }\Om\times I,
\\ u &=0 & & \text{on }\DO\times I,
\\ u &=u_0 & &\text{on }\Om\times \{0\},
\end{aligned}
\end{equation}
where $u_0\in H^1_0(\Om)$. Note that the lateral condition $u=0$ on $\DO\times I$ is incorporated in the function space $\HIO$.
Well-posedness of \eqref{eq:HJB} is established in section~\ref{sec:well_posedness} under the following hypotheses. The function $c$ is nonnegative and there exist positive constants $\nu\leq \overline{\nu}$ such that
\begin{equation}\label{eq:uniform_ellipticity}
\nu \abs{\xi}^2 \leq \xi^\top  a^\a(x,t)\; \xi \leq \bar{\nu} \abs{\xi}^2 \quad\forall\,\xi\in\Rd,\;\forall\,(x,t)\in\Om\times I,\;\forall\,\a\in\Ld.
\end{equation}
We assume the Cordes condition \cite{Smears2013,Smears2014}: there exist $\eps\in (0,1]$, $\lambda>0$ and $\omega>0$ such that
\begin{equation}\label{eq:cordes_condition}
\frac{\abs{a^\a}^2 + 1/\lambda^2 + 1/\omega^2}{(\Trace a^\a + 1/\lambda + 1/\omega)^2} \leq \frac{1}{\dim+1+\eps} \quad\text{in }\Ob\times\overline{I},\,\;\forall\,\a\in\Ld,
\end{equation}
where $\abs{a^\a}$ denotes the Frobenius norm of the matrix $a^\a$.
In the special case where $b\equiv 0$ and $c \equiv 0$, we set $\lambda = 0 $ and assume that there exist $\eps\in (0,1]$ and $\omega >0$ such that
\begin{equation}\label{eq:cordes_condition2}
\frac{\abs{a^\a}^2 + 1/\omega^2}{\left( \Trace a^\a + 1/\omega \right)^2} \leq \frac{1}{\dim+\eps} \quad\text{in }\Ob\times\overline{I},\;\forall \,\a\in\Ld.
\end{equation}
As explained in \cite{Smears2014}, $\lambda$ and $\omega$ serve to make the Cordes condition invariant under rescaling of the spatial and temporal domains. In the case of elliptic equations in two dimensions without lower order terms, the Cordes condition is equivalent to uniform ellipticity.
Given \eqref{eq:cordes_condition}, by considering transformations of the unknown of the type $u = \mathrm{e}^{\mu t}\tilde{u}$, we can assume without loss of generality that
\begin{equation}\label{eq:cordes_condition3}
\frac{\abs{a^\a}^2 + \abs{b^\a}^2/2\lambda +  (c^\a/\lambda)^2 + 1/\omega^2}{(\Trace a^\a + c^\a/\lambda + 1/\omega)^2} \leq \frac{1}{\dim+1+\eps} \quad\text{in }\Ob\times\overline{I}\,\;\forall\,\a\in\Ld.
\end{equation}
The relevance of \eqref{eq:cordes_condition} is to show that the Cordes condition is essentially independent of the lower order terms $b^\a$ and $c^\a$, although it will be simpler to work with \eqref{eq:cordes_condition3}.
Define the strictly positive function $\gamma\colon \Om\times I\times \Ld\tends \R_{>0}$ by
\begin{equation}
\gamma(x,t,\a)\coloneqq \frac{\Trace a^\a(x,t) + c^\a/\lambda + 1/\omega}{\abs{a^\a(x,t)}^2 + \abs{b^\a}^2/2\lambda + (c^\a/\lambda)^2 +1/\omega^2}.
\end{equation}
In the case of $b\equiv 0$ and $c\equiv 0$, the function $\gamma$ is defined by
\begin{equation}
\gamma(x,t,\a) \coloneqq \frac{\Trace a^\a(x,t) + 1/\omega}{\abs{a^\a(x,t)}^2 +1/\omega^2}.
\end{equation}
Continuity of the data implies that $\gamma \in C(\Ob\times\overline{I}\times\Ld)$, and it follows from \eqref{eq:uniform_ellipticity} that there exists a positive constant $\gamma_0 >0$ such that $\gamma\geq \gamma_0$ on $\Ob\times\overline{I}\times\Ld$. For each $\a\in\Ld$, define $\gamma^\a\colon (x,t)\mapsto \gamma(x,t,\a)$, and define the operator $\Fg\colon \HIO\tends L^2(I;L^2(\Om))$ by
\begin{equation}
\Fg[v]\coloneqq \inf_{\a\in\Ld}\left[ \gamma^\a \left(\p_t v - L^\a v + f^\a\right)\right].
\end{equation}
For $\omega$ and $\lambda$ as in \eqref{eq:cordes_condition3}, we introduce the operators $\Ll$ and $\Lw$ defined by
\begin{align}\label{eq:Ll_Lw_operators}
\Ll v &\coloneqq \Delta v - \lambda v &  \Lw v &\coloneqq \omega \,\p_t v - \Ll v.
\end{align}
The following result is similar to \cite[Lemma~1]{Smears2014}, so the proof is omitted here.
\begin{lemma}\label{lem:cordes_bound}
Let $\Om$ be a bounded open subset of $\Rd$, let $I=(0,T)$, and suppose that \eqref{eq:cordes_condition3} holds, or that \eqref{eq:cordes_condition2} holds if $b\equiv 0$ and $c\equiv 0$. Let $U\subset \Om$ be an open set, let $J\subset I$ be an open interval, and let the functions $u$, $v\in L^2(J;H^2(U))\cap H^1(J;L^2(U))$, and set $w\coloneqq u-v$. Then, the following inequality holds a.e.\ in $U$, for a.e.\ $t\in J$:
\begin{equation}
\abs{\Fg[u]-\Fg[v]-\Lw w} \leq \sqrt{1-\eps}\left(\omega^2 \abs{\p_t w}^2+\abs{D^2 w}^2 + 2\lambda \abs{\nabla w}^2 + \lambda^2 \abs{w}^2\right)^{1/2},
\end{equation}
with $\lambda =0$ if $b\equiv 0$ and $c\equiv 0$.
\end{lemma}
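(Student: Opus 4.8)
The plan is to reduce the claimed estimate, which involves the nonlinear operator $\Fg$, to a purely pointwise linear-algebraic inequality, and then to close the latter by a Cauchy--Schwarz argument in which the Cordes condition \eqref{eq:cordes_condition3} enters as an exact numerical bound. The first step is to strip away the infimum. Writing $P^\a\coloneqq\gamma^\a(\p_t u-L^\a u+f^\a)$ and $Q^\a\coloneqq\gamma^\a(\p_t v-L^\a v+f^\a)$, so that $\Fg[u]=\inf_\a P^\a$ and $\Fg[v]=\inf_\a Q^\a$, I would use the elementary two-sided bound $\inf_\a(P^\a-Q^\a)\leq\inf_\a P^\a-\inf_\a Q^\a\leq\sup_\a(P^\a-Q^\a)$, valid for any bounded families. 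Since $\Lw w$ is independent of $\a$ it may be pulled inside both the infimum and the supremum, and the datum $f^\a$ cancels in $P^\a-Q^\a=\gamma^\a(\p_t w-L^\a w)$, giving
\[
\inf_\a G^\a \;\leq\; \Fg[u]-\Fg[v]-\Lw w \;\leq\; \sup_\a G^\a,\qquad G^\a\coloneqq\gamma^\a(\p_t w-L^\a w)-\Lw w .
\]
It therefore suffices to bound $\abs{G^\a}$ uniformly in $\a$, pointwise a.e., which is now a statement about fixed coefficients.

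The second step is to represent $G^\a$ as a Euclidean inner product. I would introduce the derivative vector
\[
X\coloneqq\bigl(\omega\,\p_t w,\; D^2 w,\; \sqrt{2\lambda}\,\nabla w,\; \lambda w\bigr)
\]
in $\R\times\R^{\dim\times\dim}\times\R^\dim\times\R$, with the Frobenius inner product on the matrix slot, chosen precisely so that $\abs{X}^2$ equals the quantity under the square root on the right-hand side. Matching coefficients term by term gives $\gamma^\a(\p_t w-L^\a w)=\langle\gamma^\a\mathbf{A}^\a,X\rangle$ with $\mathbf{A}^\a=(1/\omega,\,-a^\a,\,-b^\a/\sqrt{2\lambda},\,c^\a/\lambda)$, while $\Lw w=\langle\mathbf{B},X\rangle$ with $\mathbf{B}=(1,\,-I,\,0,\,1)$, where $I$ is the identity matrix. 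Hence $G^\a=\langle\gamma^\a\mathbf{A}^\a-\mathbf{B},X\rangle$, and Cauchy--Schwarz yields $\abs{G^\a}\leq\abs{\gamma^\a\mathbf{A}^\a-\mathbf{B}}\,\abs{X}$.

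The crux is then the bound $\abs{\gamma^\a\mathbf{A}^\a-\mathbf{B}}\leq\sqrt{1-\eps}$. Setting $T\coloneqq\Trace a^\a+c^\a/\lambda+1/\omega$ and $S\coloneqq\abs{a^\a}^2+\abs{b^\a}^2/2\lambda+(c^\a/\lambda)^2+1/\omega^2$, a direct computation gives $\abs{\mathbf{A}^\a}^2=S$, $\langle\mathbf{A}^\a,\mathbf{B}\rangle=T$ and $\abs{\mathbf{B}}^2=\dim+2$ (the $\dim$ arising from $\abs{I}^2=\Trace I$). Since $\gamma^\a=T/S$, expanding the square collapses the cross term and leaves
\[
\abs{\gamma^\a\mathbf{A}^\a-\mathbf{B}}^2=\gamma^2 S-2\gamma T+(\dim+2)=(\dim+2)-\frac{T^2}{S}.
\]
The Cordes condition \eqref{eq:cordes_condition3} reads exactly $S/T^2\leq 1/(\dim+1+\eps)$, i.e.\ $T^2/S\geq\dim+1+\eps$, whence $\abs{\gamma^\a\mathbf{A}^\a-\mathbf{B}}^2\leq 1-\eps$; combining the three steps proves the claim. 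The degenerate case $b\equiv0$, $c\equiv0$, $\lambda=0$ is handled identically after deleting the gradient and zero-order slots, so that $\abs{\mathbf{B}}^2=\dim+1$ and \eqref{eq:cordes_condition2} gives $T^2/S\geq\dim+\eps$, again yielding $1-\eps$.

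I expect the only real obstacle to be bookkeeping: choosing the scalings in $X$, $\mathbf{A}^\a$ and $\mathbf{B}$ so that \emph{simultaneously} $\abs{X}^2$ matches the right-hand side and $\abs{\mathbf{A}^\a}^2$ reproduces the denominator $S$ of $\gamma^\a$. Once these normalisations are aligned, the identity $\abs{\gamma^\a\mathbf{A}^\a-\mathbf{B}}^2=(\dim+2)-T^2/S$ and the Cordes inequality do all the work, and there is no analytic subtlety beyond the a.e.\ pointwise validity of the derivative identities, which is guaranteed by the assumed $L^2(J;H^2(U))\cap H^1(J;L^2(U))$ regularity of $w$.
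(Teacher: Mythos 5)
Your proof is correct, and it is essentially the argument the paper has in mind: the paper omits the proof by deferring to \cite[Lemma~1]{Smears2014}, whose proof is exactly this pointwise reduction (sandwiching the difference of infima, rewriting $\gamma^\a(\p_t w - L^\a w)-\Lw w$ as a Euclidean inner product against the derivative vector, and applying Cauchy--Schwarz so that the Cordes condition yields the factor $\sqrt{1-\eps}$), here augmented with the extra slot for $\omega\,\p_t w$. All the normalisations in your $X$, $\mathbf{A}^\a$ and $\mathbf{B}$ check out, including $\abs{\mathbf{B}}^2=\dim+2$ (resp.\ $\dim+1$ in the degenerate case), so the computation closes as claimed.
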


In the following analysis, we shall write $a \lesssim b$ for $a,b \in \R$ to signify that there exists a constant $C$ such that $a \leq C\,b$, where $C$ is independent of discretisation parameters such as the element sizes of the meshes and the polynomial degrees of the finite element spaces used below, but otherwise possibly dependent on other fixed quantities, such as, for example, the constants in \eqref{eq:uniform_ellipticity} and \eqref{eq:cordes_condition} or the shape-regularity parameters of the mesh.
\subsection{Well-posedness}\label{sec:well_posedness}
For a bounded convex domain $\Om\subset\R^\dim$, the Miranda--Talenti Inequality \cite{Grisvard2011,Maugeri2000} states that $\abs{v}_{H^2(\Om)}\leq \norm{\Delta v}_{L^2(\Om)}$ for all $v\in H^2(\Om)\cap H^1_0(\Om)$. Along with the Poincar\'{e} Inequality, it implies that $H\coloneqq H^2(\Om)\cap H^1_0(\Om)$ is a Hilbert space when equipped with the inner-product
$ \pair{u}{v}_{\Delta}\coloneqq \pair{\Ll u}{\Ll v}_{L^2(\Om)}$, where $\Ll$ is from \eqref{eq:Ll_Lw_operators} and $\lambda\geq 0$ is from \eqref{eq:cordes_condition3}. It is possible to identify $H^*$, the dual space of $H$, with $L^2(\Om)$ through the duality pairing 
\begin{equation}\label{eq:h2_l2_duality_pairing}
\begin{aligned}
&\pair{f}{v}_{L^2\times H}\coloneqq \int_\Om  f (-\Ll v)\, \d x, & &f\in L^2(\Om),\; v\in H.
\end{aligned}
\end{equation}
Indeed, we clearly have $L^2(\Om) \hookrightarrow H^*$, and $H^2$-regularity of solutions of Poisson's equation in convex domains \cite{Grisvard2011} shows that this embedding is an isometry: for any $f\in L^2(\Om)$, we have $\norm{f}_{L^2(\Om)} = \norm{f}_{H^*}$. If $\varphi\in H^*$, then the Riesz Representation Theorem implies that there is a unique $w\in H$ such that $\pair{w}{v}_{\Delta}=\varphi(v)$ for all $v\in H$. Then $f=-\Ll w\in L^2(\Om)$ satisfies $\pair{f}{v}_{L^2\times H}=\varphi(v)$ for all $v\in H$.

The space $H^1_0(\Om)$ may be equipped with the inner-product $\pair{u}{v}_{H^1_0}\coloneqq\int_\Om\nabla u\cdot \nabla v + \lambda uv\, \d x$ with associated norm $\norm{\cdot}_{H^1_0}$; we note that the Poincar\'e Inequality implies positive definiteness of $\pair{\cdot}{\cdot}_{H^1_0}$ in the case of $\lambda=0$.

The relevance of these choices of duality pairing and inner-products is that the spaces $H$, $H^1_0(\Om)$ and $L^2(\Om)$ form a Gelfand triple as a result of the following integration by parts identity: for any $w\in H^1_0(\Om)$ and $v\in H$, we have
\begin{equation}
\pair{w}{v}_{L^2\times H} =\int_{\Om} w (-\Ll v)\,\d x = \int_{\Om} \nabla w\cdot \nabla v + \lambda wv \, \d x = \pair{w}{v}_{H^1_0}.
\end{equation}
Recall that $\HIO\coloneqq L^2(I;H^2(\Om)\cap H^1_0(\Om))\cap H^1(I;L^2(\Om))$. The general theory of Bochner spaces, see for instance \cite{Wloka1987}, yields the following result.
\begin{lemma}\label{lem:gelfand_triple}
Let $\Om\subset \Rd$ be a bounded convex domain and let $I=(0,T)$. Then,
\begin{equation*}
H=H^2(\Om)\cap H^1_0(\Om) \hookrightarrow H^1_0(\Om) \hookrightarrow L^2(\Om)
\end{equation*}
form a Gelfand triple \emph{\cite{Wloka1987}} under the inner product $\pair{\cdot}{\cdot}_{H^1_0}$ and the duality pairing $\pair{\cdot}{\cdot}_{L^2\times H}$.
The space $\HIO$ is continuously embedded in $C(\overline{I};H^1_0(\Om))$, and for every $u$, $v\in \HIO$ and any $t\in \overline{I}$, we have
\begin{equation}\label{eq:bochner}
\pair{u(t)}{v(t)}_{H^1_0} = 
\pair{u(0)}{v(0)}_{H^1_0}
+ \int_0^t \pair{\p_t u}{v}_{L^2\times H} + \pair{\p_t v}{u}_{L^2\times H} \,\d s.
\end{equation}
\end{lemma}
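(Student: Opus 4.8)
The plan is to recognise \eqref{eq:bochner} as the standard energy identity for a Gelfand triple and to feed the facts established just above into the general Bochner-space theory of \cite{Wloka1987}. First I would confirm that the three spaces do form a Gelfand triple with pivot $H^1_0(\Om)$. The embeddings $H \hookrightarrow H^1_0(\Om) \hookrightarrow L^2(\Om)$ are continuous by the Miranda--Talenti and Poincar\'e inequalities, and dense since $C_c^\infty(\Om)\subset H$ is already dense in $H^1_0(\Om)$, while $H^1_0(\Om)$ is dense in $L^2(\Om)$. The preceding discussion identifies $H^*$ isometrically with $L^2(\Om)$ through $\pair{\cdot}{\cdot}_{L^2\times H}$, and the integration-by-parts identity $\pair{w}{v}_{L^2\times H}=\pair{w}{v}_{H^1_0}$ for $w\in H^1_0(\Om)$, $v\in H$ shows that the embedding $H^1_0(\Om)\hookrightarrow L^2(\Om)\cong H^*$ is exactly the canonical one induced by the pivot inner product $\pair{\cdot}{\cdot}_{H^1_0}$. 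This is precisely the compatibility condition demanded of a Gelfand triple: the $V^*\times V$ duality restricts on the pivot to its own inner product.

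Next I would match $\HIO$ to the hypotheses of the abstract theory. Writing $V=H$, pivot $\mathcal H=H^1_0(\Om)$ and $V^*=H^*\cong L^2(\Om)$, a function $u\in\HIO$ satisfies $u\in L^2(I;V)$ and $\p_t u\in L^2(I;L^2(\Om))=L^2(I;V^*)$, the last equality using the isometry $L^2(\Om)\cong H^*$. Thus $u$ lies in the regularity class $L^2(I;V)\cap H^1(I;V^*)$ for which the general theory guarantees a continuous representative in $C(\overline I;\mathcal H)$; this yields both the continuous embedding $\HIO\hookrightarrow C(\overline I;H^1_0(\Om))$ and the absolute continuity of $t\mapsto\half\norm{u(t)}_{H^1_0}^2$, with $\tfrac{\d}{\d t}\half\norm{u(t)}_{H^1_0}^2=\pair{\p_t u}{u}_{L^2\times H}$ for a.e.\ $t\in I$.

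Finally I would obtain the bilinear identity \eqref{eq:bochner} by polarisation. Applying the scalar energy identity to $u+v$ and subtracting the identities for $u$ and $v$ separately (equivalently, differentiating $t\mapsto\pair{u(t)}{v(t)}_{H^1_0}$ and using bilinearity of both $\pair{\cdot}{\cdot}_{H^1_0}$ and $\pair{\cdot}{\cdot}_{L^2\times H}$) gives $\tfrac{\d}{\d t}\pair{u(t)}{v(t)}_{H^1_0}=\pair{\p_t u}{v}_{L^2\times H}+\pair{\p_t v}{u}_{L^2\times H}$ for a.e.\ $t\in I$. Integrating from $0$ to $t$ and invoking the continuity of $t\mapsto\pair{u(t)}{v(t)}_{H^1_0}$ then produces \eqref{eq:bochner} for every $t\in\overline I$.

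I expect the only genuine subtlety — the step I would check most carefully — to be the consistency of identifications rather than any hard estimate: one must ensure that the pairing $\pair{\cdot}{\cdot}_{L^2\times H}$ in \eqref{eq:bochner} is literally the $V^*\times V$ duality of the triple, so that $\p_t u\in L^2(I;L^2(\Om))$ may legitimately be read as the time-derivative in $L^2(I;V^*)$. Because the pivot here is $H^1_0(\Om)$ rather than the more usual $L^2(\Om)$, this compatibility rests entirely on the nonstandard isometry $H^*\cong L^2(\Om)$ together with the integration-by-parts identity established above; once these are in hand, the embedding and the energy identity follow directly from \cite{Wloka1987}.
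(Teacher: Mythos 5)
Your proposal is correct and follows exactly the route the paper intends: the paper gives no proof of this lemma, deferring entirely to the general theory of Bochner spaces in \cite{Wloka1987}, and your argument simply makes explicit the verification of that theory's hypotheses (dense continuous embeddings, the identification $H^*\cong L^2(\Om)$ via $\pair{\cdot}{\cdot}_{L^2\times H}$, and the compatibility of this duality pairing with the pivot inner product $\pair{\cdot}{\cdot}_{H^1_0}$ through the integration-by-parts identity) before invoking the standard embedding $L^2(I;V)\cap H^1(I;V^*)\hookrightarrow C(\overline{I};\mathcal{H})$ and the associated energy identity. The subtlety you flag --- that $\p_t u\in L^2(I;L^2(\Om))$ must be read as the $V^*$-valued derivative under the nonstandard isometry --- is indeed the one point that needs care, and you handle it correctly.
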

Define the norms $\normH{\cdot}$ on $H$ and $\norm{\cdot}_{\HIO}$ on $\HIO$ by
\begin{align}
&\normH{v}^2 \coloneqq \abs{v}^2_{H^2(\Om)} + 2\lambda \abs{v}_{H^1(\Om)}^2 + \lambda^2 \norm{v}_{L^2(\Om)}^2, & & v \in H,\\
&\norm{v}_{\HIO}^2 \coloneqq \int_0^T \omega^2 \norm{\p_t v}_{L^2(\Om)}^2 + \normH{v}^2\,\d t,& & v\in \HIO.\label{eq:HIO_norm}
\end{align}
We will make use of the following solvability result for the Cauchy--Dirichlet problem associated to the linear operator $\Lw$ from \eqref{eq:Ll_Lw_operators}.
\begin{theorem}\label{thm:heat_equation}
Let $\Om\subset \Rd$ be a bounded convex domain and let $I=(0,T)$. For each $g\in L^2(I;L^2(\Om))$ and $v_0\in H^1_0(\Om)$, there exists a unique $v\in \HIO$ such that
\begin{equation}\label{eq:heat_equation}
\begin{aligned}
\Lw v &= g & &\text{\emph{ a.e.\ in }} \Om,\text{\emph{ for a.e.\ }} t\in I,
\\ v(0)&=v_0 & &\text{\emph{ in }} \Om.
\end{aligned}
\end{equation}
Moreover, the function $v$ satisfies
\begin{equation}\label{eq:heat_bound}
\norm{v}_{\HIO}^2  + \omega \norm{v(T)}_{H^1_0}^2 \leq  \norm{g}_{L^2(I;L^2(\Om))}^2 + \omega \norm{v_0}_{H^1_0}^2.
\end{equation}
\end{theorem}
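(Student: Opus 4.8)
The plan is to establish the a priori energy identity first---it gives uniqueness at once and furnishes the bound needed for existence---and then to construct the solution by a spectral (Faedo--Galerkin) expansion in the Dirichlet eigenbasis of $\Om$. The estimate \eqref{eq:heat_bound} rests on one algebraic identity combined with the Miranda--Talenti inequality. Writing $g=\Lw v=\omega\p_t v-\Ll v$ and expanding the square pointwise in time gives
\begin{equation*}
\norm{g}_{L^2(\Om)}^2 = \omega^2\norm{\p_t v}_{L^2(\Om)}^2 + \norm{\Ll v}_{L^2(\Om)}^2 - 2\omega\int_\Om \p_t v\,\Ll v\,\d x.
\end{equation*}
By the duality pairing \eqref{eq:h2_l2_duality_pairing} the cross term equals $2\omega\,\pair{\p_t v}{v}_{L^2\times H}$, and integrating over $I$ and applying the integration-by-parts formula \eqref{eq:bochner} of Lemma~\ref{lem:gelfand_triple} with $u=v$ turns it into $\omega\bigl(\norm{v(T)}_{H^1_0}^2-\norm{v_0}_{H^1_0}^2\bigr)$. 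Integrating the identity over $I$ then yields the exact equality
\begin{equation*}
\int_0^T \omega^2\norm{\p_t v}_{L^2(\Om)}^2 + \norm{\Ll v}_{L^2(\Om)}^2\,\d t + \omega\norm{v(T)}_{H^1_0}^2 = \norm{g}_{L^2(I;L^2(\Om))}^2 + \omega\norm{v_0}_{H^1_0}^2.
\end{equation*}
Finally, integrating by parts in space and using $v(t)\in H^1_0(\Om)$ gives $\norm{\Ll v}_{L^2(\Om)}^2 = \norm{\Delta v}_{L^2(\Om)}^2 + 2\lambda\abs{v}_{H^1(\Om)}^2 + \lambda^2\norm{v}_{L^2(\Om)}^2$, so the Miranda--Talenti inequality $\abs{v}_{H^2(\Om)}\leq\norm{\Delta v}_{L^2(\Om)}$ produces $\normH{v}^2\leq\norm{\Ll v}_{L^2(\Om)}^2$; recalling \eqref{eq:HIO_norm} this is precisely \eqref{eq:heat_bound}. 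Taking $g=0$, $v_0=0$ gives uniqueness.

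For existence I would take the $L^2(\Om)$-orthonormal eigenbasis $\{\phi_k\}_{k\in\N}$ of the Dirichlet Laplacian on $\Om$, with eigenvalues $0<\mu_1\leq\mu_2\leq\cdots\tends\infty$; convexity of $\Om$ ensures, via $H^2$-regularity of the Poisson problem, that each $\phi_k\in H=H^2(\Om)\cap H^1_0(\Om)$, and the relation $\pair{\phi_j}{\phi_k}_{H^1_0}=(\mu_k+\lambda)\delta_{jk}$ shows they are orthogonal in $\pair{\cdot}{\cdot}_{H^1_0}$ too. Expanding $g(t)=\sum_k g_k(t)\phi_k$ and $v_0=\sum_k v_{0,k}\phi_k$, the problem \eqref{eq:heat_equation} diagonalises into the scalar ODEs $\omega\,v_k'+(\mu_k+\lambda)v_k=g_k$ with $v_k(0)=v_{0,k}$, each solved by variation of constants. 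The partial sums $v^N\coloneqq\sum_{k\leq N}v_k\phi_k$ lie in $\HIO$ and solve \eqref{eq:heat_equation} with data $P_N g$ and $P_N v_0$, so the bound \eqref{eq:heat_bound} applies to every difference $v^N-v^M$; since $P_N g\tends g$ in $L^2(I;L^2(\Om))$ and $P_N v_0\tends v_0$ in $H^1_0(\Om)$, the right-hand side shows $\{v^N\}$ is Cauchy in $\HIO$ and converges to some $v\in\HIO$.

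It then remains to verify that $v$ solves \eqref{eq:heat_equation}: strong convergence in $\HIO$ gives $\Lw v^N\tends\Lw v$ in $L^2(I;L^2(\Om))$, whence $\Lw v=g$, while the embedding $\HIO\hookrightarrow C(\overline{I};H^1_0(\Om))$ from Lemma~\ref{lem:gelfand_triple} gives $v(0)=\lim_N v^N(0)=\lim_N P_N v_0=v_0$ in $H^1_0(\Om)$. The estimate \eqref{eq:heat_bound} for $v$ follows by passing to the limit, or equivalently by rerunning the energy argument on $v$ itself. The one genuinely delicate point is the $L^2(I;H^2)$ regularity: here convexity of $\Om$ is indispensable, entering both through the Miranda--Talenti step that bounds the full Hessian seminorm by $\norm{\Delta v}_{L^2(\Om)}$ and through the $H^2$-regularity that places the eigenfunctions in $H$; without it one recovers only the weaker $L^2(I;H^1_0(\Om))\cap H^1(I;H^{-1}(\Om))$ solution of standard parabolic theory. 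It is the identity \eqref{eq:bochner} that makes the whole energy computation rigorous in the Bochner setting, so Lemma~\ref{lem:gelfand_triple} is central throughout.
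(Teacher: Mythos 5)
Your proposal is correct and follows essentially the same route as the paper, whose proof is only a sketch: it invokes the general Galerkin theory for parabolic equations (with the Gelfand triple $H\hookrightarrow H^1_0(\Om)\hookrightarrow L^2(\Om)$ of Lemma~\ref{lem:gelfand_triple}) for existence and uniqueness, and obtains \eqref{eq:heat_bound} by exactly the combination of \eqref{eq:bochner}, integration by parts and the Miranda--Talenti Inequality that you carry out explicitly. Your spectral expansion in the Dirichlet eigenbasis is simply a concrete instantiation of that Galerkin argument, and all the steps (the energy identity, the treatment of the cross term via the duality pairing, and the Cauchy argument for the partial sums) check out.
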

In Theorem~\ref{thm:heat_equation}, well-posedness of \eqref{eq:heat_equation} is simply a special case of the general theory of Galerkin's method for parabolic equations, see \cite{Wloka1987}. The bound \eqref{eq:heat_bound} is obtained by combining \eqref{eq:bochner}, integration by parts and the Miranda--Talenti Inequality.
\begin{theorem}\label{thm:hjb_well_posed}
Let $\Om\subset \Rd$ be a bounded convex domain, let $I=(0,T)$, and let $\Ld$ be a compact metric space. Let the data $a$, $b$, $c$ and $f$ be continuous on $\Ob\times \overline{I}\times \Ld$ and satisfy \eqref{eq:uniform_ellipticity} and \eqref{eq:cordes_condition3}, or alternatively \eqref{eq:cordes_condition2} in the case where $b\equiv 0$ and $c\equiv 0$. Then, there exists a unique strong solution $u\in\HIO$ of the HJB equation \eqref{eq:HJB}. Moreover, $u$ is also the unique solution of $\Fg[u]=0$ in $\Om\times I$, $u=0$ on $\DO \times I$ and $u=u_0$ on $\Om\times \left\{0\right\}$.
\end{theorem}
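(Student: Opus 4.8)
The plan is to recast the renormalised problem $\Fg[u]=0$ as a fixed-point equation for the linear parabolic operator $\Lw$ and to solve it by the Banach fixed-point theorem, then transfer the conclusion back to $F$. By Lemma~\ref{lem:gelfand_triple}, $\HIO\hookrightarrow C(\overline{I};H^1_0(\Om))$, so the evaluation $v(0)\in H^1_0(\Om)$ is well-defined and the affine set $X\coloneqq\{v\in\HIO : v(0)=u_0\}$ is a nonempty (e.g.\ by Theorem~\ref{thm:heat_equation} with $g=0$) closed subset of $\HIO$, hence a complete metric space under the $\norm{\cdot}_{\HIO}$ metric. Given $v\in X$, the datum $g\coloneqq \Lw v - \Fg[v]$ lies in $L^2(I;L^2(\Om))$, so Theorem~\ref{thm:heat_equation} furnishes a unique $u=T(v)\in\HIO$ solving $\Lw u = g$ with $u(0)=u_0$; since $u(0)=u_0$, the map $T$ sends $X$ into itself. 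A fixed point $u=T(u)$ satisfies $\Lw u = \Lw u - \Fg[u]$, that is $\Fg[u]=0$ a.e., together with $u(0)=u_0$, so the fixed points of $T$ are exactly the strong solutions of the renormalised problem.

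Next I would verify that $T$ is a contraction. For $v_1,v_2\in X$, set $u_i=T(v_i)$ and $w=v_1-v_2$; then $u_1-u_2$ solves $\Lw(u_1-u_2) = \Lw w - (\Fg[v_1]-\Fg[v_2])$ with vanishing initial datum. The right-hand side is precisely the quantity controlled by Lemma~\ref{lem:cordes_bound}, which gives, pointwise a.e.,
\[
\abs{\Lw w - (\Fg[v_1]-\Fg[v_2])} \leq \sqrt{1-\eps}\,\bigl(\omega^2\abs{\p_t w}^2 + \abs{D^2 w}^2 + 2\lambda\abs{\nabla w}^2 + \lambda^2\abs{w}^2\bigr)^{1/2}.
\]
Squaring and integrating over $\Om\times I$, the right-hand side becomes exactly $(1-\eps)\norm{w}_{\HIO}^2$ by the definitions of $\normH{\cdot}$ and $\norm{\cdot}_{\HIO}$. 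Applying the stability bound \eqref{eq:heat_bound} to $u_1-u_2$ with zero initial datum then yields $\norm{u_1-u_2}_{\HIO}^2 \leq \norm{\Lw w - (\Fg[v_1]-\Fg[v_2])}_{L^2(I;L^2(\Om))}^2 \leq (1-\eps)\norm{w}_{\HIO}^2$, so $T$ is a contraction on $X$ with constant $\sqrt{1-\eps}<1$. The Banach fixed-point theorem then provides a unique $u\in X$ with $\Fg[u]=0$, $u=0$ on $\DO\times I$, and $u=u_0$ on $\Om\times\{0\}$.

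It remains to show $\Fg[u]=0\iff F[u]=0$ a.e., which is a pointwise statement at each $(x,t)$ where $u$ is twice differentiable. Writing $\Phi^\a \coloneqq \p_t u - L^\a u + f^\a$, we have $F[u]=\inf_{\a\in\Ld}\Phi^\a$ and $\Fg[u]=\inf_{\a\in\Ld}\gamma^\a\Phi^\a$. By continuity of the data and compactness of $\Ld$ both infima are attained; since $\gamma^\a>0$, for each $\a$ one has $\Phi^\a\geq 0\iff\gamma^\a\Phi^\a\geq 0$ and $\Phi^\a=0\iff\gamma^\a\Phi^\a=0$, whence $\inf_\a\Phi^\a=0\iff\inf_\a\gamma^\a\Phi^\a=0$. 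Consequently the unique fixed point $u$ is also the unique strong solution of the HJB equation \eqref{eq:HJB}, which establishes the theorem.

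The main obstacle is the contraction estimate, whose success hinges entirely on the Cordes condition: the renormalisation by $\gamma^\a$ is chosen precisely so that the linearisation of $\Fg$ is consistent with $\Lw$ in the sense of Lemma~\ref{lem:cordes_bound}, producing the gain factor $\sqrt{1-\eps}$, and this must combine with the sharp constant-one stability bound \eqref{eq:heat_bound} to close the argument; a looser constant in either ingredient would destroy the contraction. The remaining steps — completeness of $X$ and the pointwise equivalence of $F$ and $\Fg$ — are routine given the Gelfand-triple structure of Lemma~\ref{lem:gelfand_triple} and the strict positivity of $\gamma$.
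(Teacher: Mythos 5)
Your argument is correct, but it takes a genuinely different route from the paper. The paper recasts the problem as $\calA(u)=0$ for the nonlinear operator $\calA\colon\HIO\tends\HIO^*$ defined in \eqref{eq:calA_definition}, proves that $\calA$ is Lipschitz continuous and strongly monotone (the latter via exactly the same two ingredients you use: Lemma~\ref{lem:cordes_bound} and the sharp energy bound \eqref{eq:heat_bound}), and invokes the Browder--Minty Theorem; it then recovers $\Fg[u]=0$ and $u(0)=u_0$ from $\calA(u)=0$ by using the surjectivity of $v\mapsto(\Lw v, v(0))$ furnished by Theorem~\ref{thm:heat_equation}. You instead run the classical contraction-mapping proof of solvability for Lipschitz strongly monotone problems: precondition by $\Lw^{-1}$ (with the initial condition built into the affine domain $X$), and show the resulting map $T$ has Lipschitz constant $\sqrt{1-\eps}<1$, since \eqref{eq:heat_bound} with zero initial datum has constant exactly one and Lemma~\ref{lem:cordes_bound} integrates to precisely $(1-\eps)\norm{w}_{\HIO}^2$ by the definition \eqref{eq:HIO_norm}. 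Your version is more elementary and constructive (it avoids the duality pairing on $\HIO^*$ and shows that the Picard iteration converges geometrically), and it handles the initial condition by restriction to $X$ rather than by the penalisation term $\omega\pair{u(0)-u_0}{v(0)}_{H^1_0}$ in $\calA$. What the paper's monotonicity formulation buys is that it is the exact template reused for the semi-discrete and fully-discrete schemes in sections~\ref{sec:temporal} and~\ref{sec:stability}, where the addition--subtraction of $\pair{\Lw w}{\Lw w}$ and the resulting strong monotonicity of $A_\tau$ and $\Adg$ are the discrete counterparts of \eqref{eq:calA_strong_monotonicity}. Your concluding equivalence of $\Fg[u]=0$ and $F[u]=0$ via positivity of $\gamma^\a$ and attainment of the infima over the compact set $\Ld$ coincides with the paper's final step.
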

\begin{proof}
The proof consists of establishing the equivalence of \eqref{eq:HJB} with the problem of solving the equation $\Fg[u]=0$ and $u(0)=u_0$, which can be analysed with the Browder--Minty Theorem. Let the operator $\calA\colon \HIO\tends \HIO^{*}$ be defined by
\begin{equation}\label{eq:calA_definition}
\pair{\calA(u)}{v}\coloneqq\int_I\int_\Om \Fg[u]\,\Lw v\,\d x\,\d t + \omega \pair{u(0)-u_0}{v(0)}_{H^1_0}.
\end{equation}
Compactness of $\Ld$ and continuity of the data imply that $\calA$ is Lipschitz continuous. Indeed, letting $u$, $v$ and $z\in \HIO$, we find that
\begin{multline}\label{eq:calA_Lipschitz_continuity}
\abs{\pair{\calA(u)-\calA(v)}{z}} \leq\norm{\Fg[u]-\Fg[v]} _{L^2(I;L^2(\Om))}\norm{\Lw z}_{L^2(I;L^2(\Om))} \\ + \omega\norm{u(0)-v(0)}_{H^1_0}\norm{z(0)}_{H^1_0}
\leq C\norm{u-v}_{\HIO}\norm{z}_{\HIO},
\end{multline}
where the constant $C$ depends only on the dimension $d$, $\omega$, $T$, and on the supremum norms of $a$, $b$, $c$ and $f$ and $\gamma$ over $\Ob\times \overline{I}\times\Ld$.
We also claim that $\calA$ is strongly monotone. Define $w\coloneqq u-v$. Addition and substraction of $\int_{I_n}\pair{\Lw w}{\Lw w}_{L^2}\,\d t$ shows that
\begin{multline*}
\pair{\calA(u)-\calA(v)}{w} = \norm{\Lw w}^2_{L^2(I;L^2(\Om))}+ \omega \norm{w(0)}_{H^1_0}^2 \\ +\int_I\int_\Om \left( \Fg[u]-\Fg[v]-\Lw w\right)\Lw w\,\d x\, \d t .
\end{multline*}
Lemma~\ref{lem:cordes_bound}, the bound \eqref{eq:heat_bound} and the Cauchy--Schwarz Inequality show that
\begin{equation}\label{eq:calA_strong_monotonicity}
\begin{aligned}
\pair{\calA(u)-\calA(v)}{w} &\geq \frac{1}{2} \norm{\Lw w}_{L^2(I;L^2(\Om))}^2 + \omega\norm{w(0)}_{H^1_0}^2  - \frac{1-\eps}{2}\norm{w}_{\HIO}^2 \\
& \geq \frac{\eps}{2} \norm{w}_{\HIO}^2 + \frac{\omega}{2}\norm{w(T)}_{H^1_0}^2 + \frac{\omega}{2}\norm{w(0)}_{H^1_0}^2.
\end{aligned}
\end{equation}
The inequalities \eqref{eq:calA_Lipschitz_continuity} and \eqref{eq:calA_strong_monotonicity} imply that $\calA$ is a bounded, continuous, coercive and strongly monotone operator, so the Browder--Minty~Theorem \cite{Renardy2004} shows that there exists a unique $u\in\HIO$ such that $\calA(u)=0$.

Theorem~\ref{thm:heat_equation} shows that for each $g\in L^2(I;L^2(\Om))$, there exists a $v\in\HIO$ such that $\Lw v = g$ and $v(0)=0$. So, $\calA(u)=0$ implies that $\int_I\int_\Om \Fg[u]\,g\,\d x \, \d t =0$ for all $g\in L^2(I;L^2(\Om))$, and since $\Fg[u] \in L^2(I;L^2(\Om))$, we obtain $\Fg[u]=0$. Theorem~\ref{thm:heat_equation} also shows that $\pair{u(0)}{v}_{H^1_0} = \pair{u_0}{v}_{H^1_0}$ for all $v\in H^1_0(\Om)$, hence $u(0)=u_0$.

We claim that $u\in\HIO$ solves $\Fg[u]=0$ if and only if $u$ solves \eqref{eq:HJB}. Since $\gamma^\a$ is positive, $\gamma^\a(\p_t u-L^\a u+ f^\a)\geq 0$ for all $\a\in\Ld$ is equivalent to $\p_t u - L^\a u+f^\a \geq 0$ for all $\a\in \Ld$, so $\Fg[u]\geq 0$ is equivalent to $F[u]\geq 0$. Compactness of $\Ld$ and continuity of the data imply that for a.e.\ $t\in I$, for a.e.\ point of $\Om$, the extrema in the definitions of $\Fg[u]$ and $F[u]$ are attained by some elements of $\Ld$, thereby giving $\Fg[u]\leq 0$ if and only if $F[u]\leq 0$. Therefore, existence and uniqueness in $\HIO$ of a solution of $\Fg[u]=0$ is equivalent to existence and uniqueness of a solution of \eqref{eq:HJB}.\qed
\end{proof}

\section{Temporal semi-discretisation}\label{sec:temporal}%
In this section, we explore some of the general principles underlying the numerical scheme for the parabolic problem \eqref{eq:HJB}.
Before presenting the fully-discrete scheme in section~\ref{sec:numerical_scheme}, we briefly consider in this section the temporal semi-discretisation of parabolic HJB equations, so as to highlight some key ideas in the derivation and analysis of a stable method. The fully-discrete scheme will then combine these ideas with the methods from \cite{Smears2014} used to discretise space.

The proof of Theorem~\ref{thm:hjb_well_posed} indicates that we should discretise the operator appearing in \eqref{eq:calA_definition}, and find stability in a norm that is analogous to $\norm{\cdot}_{\HIO}$ from \eqref{eq:HIO_norm}.
Although \eqref{eq:calA_definition} expresses the global space-time problem, we will employ a temporal discontinuous Galerkin method, thus leading to a time-stepping scheme.

Let $\left\{\It\right\}_\tau$ be a sequence of partitions of $(0,T)$ into half-intervals $I_n\coloneqq (t_{n-1},t_n] \in \It$, with $1\leq n \leq N=N(\tau)$. We say that $\It$ is regular provided that
\begin{equation}
[0,T]=\bigcup_{I_n\in\It}\overline{I_n},\quad 0=t_0 \leq t_{n-1} < t_n \leq t_N = T,\quad\forall\, n\leq N,\; \forall \,\tau.
\end{equation}
For each interval $I_n \in \It$, let $\tau_n \coloneqq \abs{t_n-t_{n-1}}$. It is assumed that $\tau = \max_{1\leq n \leq N} \tau_n$.
For each $\tau$, let $\bo q=\left(q_1,\dots,q_N\right)$ be a vector of positive integers, so $q_n\geq 1$ for all $I_n\in\It$. For a vector space $V$ and $I_n\in\It$, let $\calQ_{q_n}\left(V\right)$ denote the space of $V$-valued univariate polynomials of degree at most $q_n$. Recalling that $H\coloneqq H^2(\Om)\cap H^1_0(\Om)$, we define the semi-discrete DG finite element space $\Vt$ by
\begin{equation}\label{eq:vt_definition}
\Vt \coloneqq \left\{v\in L^2(I;H),\; \eval{v}{I_n}\in \calQ_{q_n}(H)\;\;\forall\,I_n\in\It \right\}.
\end{equation}
Functions from $\Vt$ are taken to be left-continuous, but are generally discontinuous at the partition points $\{t_n\}_{n=1}^{N-1}$.
We denote the right-limit of $v\in\Vt$ at $t_n$ by $v(t_n^+)$, where $0\leq n < N$.
The jump operators $\lld \cdot \rrd$ and average operators $\llc \cdot \rrc$, $0\leq n \leq N$, are defined by
\begin{equation}
\begin{aligned}
\lld v \rrd &\coloneqq - v(0^+), & \llc v \rrc &\coloneqq v(0^+), & &\text{if }n=0,\\
\lld v \rrd &\coloneqq v(t_n)-v(t_n^+), & \llc v \rrc &\coloneqq\half v(t_n)+\half v(t_n^+), &
&\text{if }1\leq n < N,\\
\lld v \rrd &\coloneqq  v(T), &  \llc v \rrc &\coloneqq v(T), &
&\text{if } n=N.
\end{aligned}
\end{equation}
Define the nonlinear form $A_\tau\colon\Vt\times \Vt \tends \R$  by
\begin{multline}
A_\tau(u_\tau;v_\tau) \coloneqq \SIn \pair{\Fg[u_\tau]}{\Lw v_\tau}_{L^2(\Om)} \, \d t \\ - \omega \sum_{n=0}^{N-1} \pair{\lld u_\tau \rrd}{\llc v_\tau \rrc}_{H^1_0}
+\frac{\omega}{2} \sum_{n=1}^{N-1} \pair{\lld u_\tau \rrd}{\lld v_\tau \rrd}_{H^1_0}.
\end{multline}
We note that $ \half \lld v \rrd -\llc v \rrc  = v(t_n^+)$ for $1\leq n < N$.
The semi-discrete scheme consists of finding a $u_\tau\in\Vt$ such that
\begin{equation}\label{eq:semitime_scheme}
A_\tau(u_\tau;v_\tau) = \omega \pair{u_0}{v_\tau(0^+)}_{H^1_0}
\qquad\forall\,v_\tau\in\Vt.
\end{equation}
Since the solution $u \in \HIO$ of \eqref{eq:HJB} belongs to $C(\overline{I};H^1_0(\Om))$, it is clear that $A_\tau(u;v_\tau)=\omega\pair{u_0}{v_\tau(0^+)}_{H^1_0}$ for all $v_\tau\in\Vt$, so the scheme is consistent. By considering test functions $v_\tau$ that have support on successive intervals $\overline{I_n}\in\It$, it is easily seen that $\eval{u_\tau}{I_n}$ is determined only by the data and by $u(t_{n-1})$, thus \eqref{eq:semitime_scheme} is a time-stepping scheme.
The main ingredients required to show that the above scheme is stable are as follows.
We introduce the bilinear form $C_\tau\colon \Vt\times\Vt\tends \R$ defined by
\begin{multline}\label{eq:c_tau_definition_2}
C_\tau(u_\tau,v_\tau) \coloneqq \SIn \pair{ \Lw u_\tau}{\Lw v_\tau}_{L^2(\Om)}\, \d t
\\ - \omega \sum_{n=0}^{N-1} \pair{\lld u_\tau \rrd}{\llc v_\tau \rrc}_{H^1_0}
 +\frac{\omega}{2} \sum_{n=1}^{N-1} \pair{\lld u_\tau \rrd}{\lld v_\tau \rrd}_{H^1_0}.
\end{multline}
Integration by parts shows that for any $u_\tau$, $v_\tau\in \Vt$, we have
\begin{multline}\label{eq:c_tau_definition_1}
C_\tau(u_\tau,v_\tau) 
=
\SIn \omega^2 \pair{\p_t u_\tau}{\p_t v_\tau}_{L^2(\Om)} + \pair{\Ll u_\tau}{\Ll v_\tau}_{L^2(\Om)}\, \d t
\\
+\omega \sum_{n=1}^{N} \pair{\llc u_\tau \rrc}{\lld v_\tau \rrd}_{H^1_0} +\frac{\omega}{2} \sum_{n=1}^{N-1} \pair{\lld u_\tau \rrd}{\lld v_\tau \rrd}_{H^1_0}.
\end{multline}
Combining \eqref{eq:c_tau_definition_2} and \eqref{eq:c_tau_definition_1} reveals the stability properties of $C_\tau$ when re-written as
\begin{multline}\label{eq:c_tau_definition_3}
C_\tau(u_\tau,v_\tau)= \frac{1}{2} \SIn \omega^2 \pair{\p_t u_\tau}{\p_t v_\tau}_{L^2} + \pair{\Ll u_\tau}{\Ll v_\tau}_{L^2} + \pair{\Lw u_\tau}{\Lw v_\tau}_{L^2}\, \d t
\\ +\frac{\omega}{2} \sum_{n=1}^{N} \pair{\llc u_\tau \rrc}{\lld v_\tau \rrd}_{H^1_0} - \frac{\omega}{2}\sum_{n=0}^{N-1} \pair{\lld u_\tau \rrd}{\llc v_\tau \rrc}_{H^1_0}+\frac{\omega}{2} \sum_{n=1}^{N-1} \pair{\lld u_\tau \rrd}{\lld v_\tau \rrd}_{H^1_0}.
\end{multline}
Indeed, it follows from \eqref{eq:c_tau_definition_3} and the Miranda--Talenti Inequality that, for any $u_\tau \in \Vt$,
\begin{multline}\label{eq:c_tau_stability}
C_\tau(u_\tau,u_\tau)\geq
\frac{1}{2} \SIn \omega^2  \norm{\p_t u}_{L^2(\Om)}^2+\norm{u_\tau}_{H}^2 + \norm{\Lw u_\tau }_{L^2(\Om)}^2  \d t
\\  + \frac{\omega}{2} \norm{u_\tau(T)}_{H^1_0}^2 +  \frac{\omega}{2}\norm{u_\tau(0^+)}_{H^1_0}^2 +\frac{\omega}{2} \sum_{n=1}^{N-1} \norm{\lld u_\tau \rrd }_{H^1_0}^2.
\end{multline}
The key observation here is that the antisymmetric terms in \eqref{eq:c_tau_definition_3} cancel in $C_\tau(u_\tau,u_\tau)$, and this technique will be used again in section~\ref{sec:stability} for the analysis of stability of the fully-discrete scheme.
The above considerations imply stability of the scheme as follows: \eqref{eq:c_tau_definition_2} implies that 
\begin{equation*}
A_\tau(u_\tau;v_\tau) = \SIn \pair{\Fg[u_\tau]-\Lw u_\tau}{\Lw v_\tau}_{L^2(\Om)} \,\d t + C_\tau(u_\tau,v_\tau) \quad\forall\,u_\tau,\,v_\tau\in\Vt;
\end{equation*}
which mirrors the addition-substraction step of the proof of Theorem~\ref{thm:hjb_well_posed}. Then, we use \eqref{eq:c_tau_stability} to show that $A_\tau$ is strongly monotone: for any $u_\tau$, $v_\tau\in\Vt$, $w_\tau \coloneqq u_\tau - v_\tau$, we have
\[
A_\tau(u_\tau;w_\tau)-A_\tau(v_\tau;w_\tau)
\geq \frac{\eps}{2}\SIn \omega^2 \norm{\p_t w_\tau}_{L^2(\Om)}^2 + \norm{w_\tau}_{H}^2  \d t +\frac{\omega}{2} \sum_{n=0}^{N} \norm{\lld w_\tau \rrd }_{H^1_0}^2.
\]
Therefore, the well-posedness of the semi-discrete scheme can be shown by an induction argument, based on the Browder--Minty Theorem, that is similar to the one given in the proof of Theorem~\ref{thm:dg_stability} below, concerning the well-posedness of the fully-discrete scheme.
Instead of pursuing the analysis of the semi-discrete scheme further, we now turn towards the fully-discrete method.

\section{Finite element spaces}
\label{sec:fem}%
Let $\{\calT_h\}_h$ be a sequence of shape-regular meshes on $\Om$, such that each element $K\in\calT_h$ is a simplex or a parallelepiped. Let $h_K \coloneqq \diam K$ for each $K\in\calT_h$. It is assumed that $h = \max_{K\in\calT_h} h_K$ for each mesh $\calT_h$.
Let $\calF^i_h$ denote the set of interior faces of the mesh $\calT_h$ and let $\calF_h^b$ denote the set of boundary faces.
The set of all faces of $\calT_h$ is denoted by $\calFhib \coloneqq \calF_h^i\cup\calF_h^b$. Since each element has piecewise flat boundary, the faces may be chosen to be flat.

\paragraph{Mesh conditions}
The meshes are allowed to be irregular, i.e.\ there may be hanging nodes. We assume that there is a uniform upper bound on the number of faces composing the boundary of any given element; in other words, there is a $c_{\calF}>0$, independent of $h$, such that
\begin{equation}\label{eq:card_F_bound}
\max_{K \in \calT_h} \card \{F \in \calFhib \colon F \subset \p K \} \leq c_{\calF}.
\end{equation}
It is also assumed that any two elements sharing a face have commensurate diameters, i.e.\ there is a $\ch \geq 1$, independent of $h$, such that, for any $K$, $K^{\prime}$ that share a face,
\begin{equation}\label{eq:c_h_bound}
 \max ( h_K, h_{K^{\prime}} ) \leq \ch \min( h_K, h_{K^{\prime}} ).
\end{equation}
For each $h$, let $\bo p = \left( p_K ;\; K\in \calT_h\right)$ be a vector of positive integers, such that there is a $\cp \geq 1$, independent of $h$, such that, for any $K$, $K^{\prime}$ that share a face,
\begin{equation}\label{eq:c_p_bound}
 \max( p_K, p_{K^\prime} ) \leq \cp \min( p_K, p_{K^\prime} ).
\end{equation}

\paragraph{Function spaces}
For each $K \in \calT_h$, let $\calP_{p_K}$ be the space of all real-valued polynomials in $\R^\dim$ with either total or partial degree at most $p_K$. In particular, we allow the combination of spaces of polynomials of fixed total degree on some parts of the mesh with spaces of polynomials of fixed partial degree on the remainder. We also allow the use of the space of polynomials of total degree at most $p_K$ even when $K$ is a parallelepiped.
The spatial discontinuous Galerkin finite element space $V_{h,\bo p}$ is defined by
\begin{equation}\label{eq:cordes_fem_space}
 V_{h,\bo p} \coloneqq \left\{v \in L^2(\Om),\; \eval{v}{K} \in \calP_{p_K}\;\;\forall\, K \in \calT_h \right\}.
\end{equation}
For $\It$ a regular partition of $I$, the space-time discontinuous Galerkin finite element space $\Vth$ is defined by
\begin{equation}\label{eq:spacetime_fem_space}
\Vth \coloneqq \left\{ v\in L^2\left(I;\Vh\right),\; \eval{v}{I_n}\in \calQ_{q_n}\left(\Vh\right)\;\;\forall \,I_n\in \It\right\}.
\end{equation}
As in section~\ref{sec:temporal}, we take functions from $\Vth$ to be left-continuous.
The support of a function $v_h\in\Vth$, denoted by $\supp v_h$, is a subset of $\overline{I}$, and is understood to be the support of $v_h\colon I\tends \Vh$, i.e.\ when viewing $v_h$ as a mapping from $I$ into $\Vh$. 

For $\bo s  \coloneqq \left( s_K \colon K\in \calT_h\right)$ a vector of nonnegative real numbers, and $r\in[1,\infty]$, define the broken Sobolev space $W^{\bo s}_r(\Om;\calT_h) \coloneqq \left\{ v \in L^r(\Om),\;\eval{v}{K} \in W^{s_K}_r(K)\;\forall K\in\calT_h \right\}$.
For shorthand, define $H^{\bo s}(\Om;\calT_h)\coloneqq W^{\bo s}_2(\Om;\calT_h)$, and, for $s\geq 0$, set $W^{s}_r(\Om;\calT_h) \coloneqq W^{\bo s}_r(\Om;\calT_h)$, where $s_K = s$ for all $K\in\calT_h$.
Define the norm $\norm{\cdot}_{W^{\bo s}_r(\Om;\calT_h)}$ on $W^{\bo s}_r(\Om;\calT_h)$ by $\norm{v}_{W^{\bo s}_r(\Om;\calT_h)}^r \coloneqq \sum_{K\in\calT_h}\norm{v}_{W^{s_K}_r(K)}^r$, with the usual modification when $r=\infty$.

\paragraph{Spatial jump, average, and tangential operators}
For each face $F$, let $n_F \in \Rd$ denote a fixed choice of a unit normal vector to $F$. Since each face $F$ is flat, the normal $n_F$ is constant. For an element $K\in\calT_h$ and a face $F\subset \p K$, let $\tau_F\colon H^{s}(K)\tends H^{s-1/2}(F)$, $s>1/2$, denote the trace operator from $K$ to $F$. The trace operator $\tau_F$ is extended componentwise to vector-valued functions.
Define the jump operator $\llb \cdot \rrb$ and the average operator $\lla \cdot \rra$ by
\begin{align*}
  \llb \phi \rrb  &\coloneqq \tau_F \left( \eval{\phi}{K_{\rm{ext}}} \right) - \tau_F \left( \eval{\phi}{K_{\rm{int}}} \right),
& \lla \phi \rra &\coloneqq  \half\tau_F \left( \eval{\phi}{K_{\rm{ext}}} \right) + \half \tau_F \left( \eval{\phi}{K_{\rm{int}}} \right),
&\text{if }F\in\calF^i_h,
\\
\llb \phi \rrb  &\coloneqq \tau_F \left(\eval{\phi}{K_{\rm{ext}}}\right),
&  \lla \phi \rra &\coloneqq  \tau_F \left(\eval{\phi}{K_{\rm{ext}}} \right),
&\text{if }F\in\calF_h^b,
\end{align*}
where $\phi$ is a sufficiently regular scalar or vector-valued function, and $K_{\rm{ext}}$ and $K_{\rm{int}}$ are the elements to which $F$ is a face, i.e.\ $F= \p K_{\rm{ext}} \cap \p K_{\rm{int}}$.
Here, the labelling is chosen so that $n_F$ is outward pointing for $K_{\rm{ext}}$ and inward pointing for $K_{\rm{int}}$.
Using this notation, the jump and average of scalar-valued functions, resp.\ vector-valued, are scalar-valued, resp.\ vector-valued.
For a face $F$, let $\nabla_{\rm T}$ and $\Div_{\rm T}$ denote respectively the tangential gradient and tangential divergence operators on $F$; see \cite{Grisvard2011,Smears2013} for further details.

\section{Numerical Scheme}\label{sec:numerical_scheme}
The definition of the numerical scheme requires the following bilinear forms, which were first introduced in the analysis of elliptic HJB equations in \cite{Smears2014}.
First, for $\lambda\geq0$ as in section~\ref{sec:problem_analysis}, the symmetric bilinear form $\Blambda\colon \Vh \times \Vh \tends \R$ is defined by
\begin{equation*}
\begin{split}
\Blambda (u_h, v_h) &\coloneqq \sum_{K\in\calT_h}\left[\pair{D^2u_h}{D^2v_h}_K + 2\lambda \pair{\nabla u_h}{\nabla v_h}_K + \lambda^2 \pair{u_h}{v_h}_K\right]\\
&+ \sum_{F \in \calF^i_h} \bigl[\pair{\DivT\nablaT \lla u_h \rra}{ \llb \nabla v_h \cdot n_F \rrb }_F + \pair{\DivT\nablaT \lla v_h \rra}{ \llb \nabla u_h \cdot n_F \rrb }_F \bigr] \\
&-\sum_{F \in \calFhib} \bigl[ \pair{\nablaT \lla \nabla u_h \cdot n_F \rra}{\llb \nablaT v_h \rrb}_F + \pair{\nablaT \lla \nabla v_h \cdot n_F \rra}{\llb \nablaT u_h \rrb}_F\bigr]
\\ &  - \lambda \sum_{F\in\calFhib} \left[ \pair{\lla \nabla u_h \cdot n_F\rra}{\llb v_h \rrb}_F+\pair{\lla \nabla v_h \cdot n_F\rra}{\llb u_h \rrb}_F \right]
\\ & - \lambda \sum_{F\in\calF^i_h} \left[ \pair{\lla u_h \rra}{\llb \nabla v_h\cdot n_F \rrb}_F+\pair{\lla v_h\rra}{\llb \nabla u_h\cdot n_F\rrb}_F \right],
\end{split}
\end{equation*}
Then, for face-dependent quantities $\mu_F>0$ and $\eta_F>0$, to be specified later, let the jump stabilisation term $\Jo\colon\Vh\times\Vh\tends\R$ be defined by
\begin{multline}
 \Jo(u_h,v_h) \coloneqq \sum_{F\in\calF^i_h} \mu_F \pair{\llb \nabla u_h\cdot n_F \rrb}{\llb \nabla v_h\cdot n_F \rrb}_F
\\+ \sum_{F \in\calFhib}\bigl[ \mu_F \pair{\llb \nablaT u_h \rrb}{\llb \nablaT v_h \rrb}_F
+  \eta_F \pair{\llb u_h \rrb}{\llb v_h \rrb}_F\bigr].
\end{multline}
Recalling that $\Ll v \coloneqq \Delta v - \lambda v$, we introduce the one-parameter family of bilinear forms $\Bdgr \colon \Vh \times \Vh \tends \R$, where $\theta\in[0,1]$, defined by
\begin{equation}\label{eq:cordes_bdgr}
 \Bdgr(u_h,v_h) \coloneqq \theta B_{h,*}(u_h,v_h) + (1-\theta) \sum_{K\in\calT_h}\pair{\Ll u_h}{\Ll v_h}_K + \Jo(u_h,v_h).
\end{equation}
Define the bilinear form $a_h\colon \Vh\times\Vh\tends\R$ by
\begin{multline}
a_h(u_h,v_h) \coloneqq \sum_{K\in\calT_h}\pair{\nabla u_h}{\nabla v_h}_K + \lambda \pair{u_h}{v_h}_K - \sum_{F\in\calFhib}\pair{\lla \nabla u_h\cdot n_F \rra}{\llb v_h \rrb}_F 
\\ -\sum_{F\in\calFhib}\pair{\lla \nabla v_h\cdot n_F \rra}{\llb u_h \rrb}_F + \sum_{F\in\calFhib} \mu_F \pair{\llb u_h\rrb}{\llb v_h \rrb}_F.
\end{multline}
Observe that the bilinear form $a_h$ corresponds precisely to the standard symmetric interior penalty discretisation of the operator $-\Ll$, and its symmetry plays an imporant role in the subsequent analysis.

Define the bilinear forms $\CFh$ and $\Cdg \colon \Vth \times \Vth \tends \R$ by
\begin{align}
\CFh(u_h,v_h) &\coloneqq \omega \SIn \sum_{F\in\calF_h^i}\pair{\llb \nabla u_h\cdot n_F\rrb}{\lla \p_t v_h \rra }_F\, \d t
\\& +\omega\SIn \sum_{F\in\calFhib}\left[ \mu_F \pair{\llb u_h\rrb}{\llb \p_t v_h \rrb}_F - \pair{\llb u_h\rrb}{\lla \nabla \p_t v_h \cdot n_F\rra}_F\right]  \d t ,\nonumber
\\
\Cdg(u_h,v_h) &\coloneqq \SIn \SK \pair{\Lw u_h}{\Lw v_h}_K \, \d t + \CFh(u_h,v_h) \label{eq:cdg_definition}
\\ &\qquad+ \SIn \Bdg(u_h,v_h)-\sum_{K\in\calT_h}\pair{\Ll u_h}{\Ll v_h}_K\,\d t  \nonumber
\\&\qquad- \omega \sum_{n=0}^{N-1} a_h(\lld u_h \rrd,\llc v_h \rrc) + \frac{\omega}{2} \sum_{n=1}^{N-1} a_h(\lld u_h \rrd,\lld v_h \rrd).\nonumber
\end{align}
Define the nonlinear form $\Adg\colon\Vth\times\Vth\tends\R$ by
\begin{equation}\label{eq:adg_definition}
\Adg(u_h;v_h)\coloneqq\sum_{n=1}^N\int_{I_n}\sum_{K\in\calT_h}\left[\pair{\Fg[u_h]}{\Lw v_h}_K - \pair{\Lw u_h}{\Lw v_h}_K\right] \d t+ \Cdg(u_h,v_h).
\end{equation}
The form $\Adg$ is linear in its second argument, but it is nonlinear in its first argument.
Supposing that $u_0$ is sufficiently regular, such as $u_0\in H^s(\Om;\calT_h)$, with $s>3/2$, the numerical scheme is to find $u_h\in\Vth$ such that
\begin{equation}\label{eq:numerical_scheme}
\Adg(u_h;v_h)= \omega\, a_h(u_0,v_h(0^+)) \qquad\forall\, v_h\in\Vth.
\end{equation}
If $u_0$ fails to be sufficiently regular, we can replace $u_0$ in the right-hand side of \eqref{eq:numerical_scheme} with a suitable projection into $\Vh$, at the expense of introducing a consistency error that vanishes in the limit.
By testing with functions $v_h\in\Vth$ that are supported on $\overline{I_n}$, it is found that \eqref{eq:numerical_scheme} is equivalent to finding $u_h\in\Vth$ such that
\begin{multline}\label{eq:numerical_scheme_time-stepping}
\int_{I_n} \SK \pair{\Fg[u_h]}{\Lw v_h}_K + \Bdg(u_h,v_h)-\SK \pair{\Ll u_h}{\Ll v_h}_K\,\d t
\\ + \omega \int_{I_n} \sum_{F\in\calF_h^i}\pair{\llb \nabla u_h\cdot n_F\rrb}{\lla \p_t v_h \rra }_F+ \sum_{F\in\calFhib} \mu_F \pair{\llb u_h\rrb}{\llb \p_t v_h \rrb}_F \,\d t
 \\ - \omega \int_{I_n} \sum_{F\in\calFhib}\pair{\llb u_h\rrb}{\lla \nabla \p_t v_h \cdot n_F\rra}_F \, \d t
+ \omega \,a_h(u_h(t_{n-1}^+),v_h(t_{n-1}^+)) 
\\ = \omega\, a_h(u_h(t_{n-1}),v_h(t_{n-1}^+)) ,
\end{multline}
for all $v_h\in \calQ_{q_n}(\Vh)$, with the convention $u_h(t_0)\coloneqq u_0$.
Therefore, \eqref{eq:numerical_scheme} defines a time-stepping scheme, and in practice it is \eqref{eq:numerical_scheme_time-stepping} that is used for computations.

\paragraph{Consistency}
The following result is shown in \cite{Smears2013,Smears2014}.
\begin{lemma}\label{lem:broken_identity}
Let $\Om$ be a bounded Lipschitz polytopal domain and let $\calT_h$ be a simplicial or parallelepipedal mesh on $\Om$. Let $w\in H^s(\Om;\calT_h) \cap H^2(\Om)\cap H^1_0(\Om)$, with $s>5/2$. Then, for every $v_h \in \Vh$, we have the identities
\begin{equation}\label{eq:consistency_B_lambda}
 \Blambda(w,v_h)=\sum_{K\in\calT_h}\pair{\Ll w}{\Ll v_h}_K \quad\text{and}\quad \Jo(w,v_h)=0.
\end{equation}
\end{lemma}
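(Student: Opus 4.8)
The plan is to treat the two identities separately, deriving both from the regularity of $w$ together with elementwise integration by parts, and then reassembling the resulting element-boundary integrals into face integrals.

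For the jump identity, I would first note that the hypotheses $w\in H^s(\Om;\calT_h)\cap H^2(\Om)\cap H^1_0(\Om)$ with $s>5/2$ guarantee that all the traces appearing in $\Jo$ are well-defined and single-valued. Since $w\in H^2(\Om)$, the gradient $\nabla w$ lies in $H^1(\Om)$ and therefore has a single-valued trace across every interior face, so that $\llb \nabla w\cdot n_F\rrb=0$ and $\llb \nablaT w\rrb=0$ for all $F\in\calF^i_h$, while $\llb w\rrb=0$ on interior faces because $w$ itself is continuous. On boundary faces, the condition $w\in H^1_0(\Om)$ forces $w=0$ on $\DO$, whence both $\llb w\rrb=0$ and $\llb \nablaT w\rrb=0$ there. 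Substituting these into the definition of $\Jo$ makes every summand vanish, giving $\Jo(w,v_h)=0$.

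For the $\Blambda$ identity I would proceed in two stages. First, using exactly the same vanishing-jump observations, every face term in $\Blambda(w,v_h)$ carrying a factor $\llb \nabla w\cdot n_F\rrb$, $\llb \nablaT w\rrb$ or $\llb w\rrb$ drops out, and each average $\lla\cdot\rra$ of $w$ reduces to the single-valued trace of $w$; what remains is the volume contribution $\sum_{K}[\pair{D^2 w}{D^2 v_h}_K + 2\lambda\pair{\nabla w}{\nabla v_h}_K + \lambda^2\pair{w}{v_h}_K]$ together with the face terms in which $w$, $\nablaT w$ or $\nabla w\cdot n_F$ appears as a single-valued trace paired against a jump of $v_h$. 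Second, I would apply an elementwise integration-by-parts identity of Miranda--Talenti type on each $K$ to rewrite $\int_K D^2 w:D^2 v_h$ as $\int_K \Delta w\,\Delta v_h$ plus $\p K$ boundary integrals involving only tangential derivatives, and to convert $2\lambda\int_K\nabla w\cdot\nabla v_h$ into $-\lambda\int_K(\Delta w\,v_h + w\,\Delta v_h)$ plus boundary integrals. Summed over $K\in\calT_h$, the volume parts then combine with $\lambda^2\pair{w}{v_h}_K$ to give $\sum_K\pair{\Ll w}{\Ll v_h}_K$, while the $\p K$ integrals, by single-valuedness of the traces of $w$, reassemble into precisely the surviving face terms identified in the first stage.

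The principal obstacle is this second stage: establishing the correct elementwise Hessian identity and bookkeeping the reassembly into face integrals. One must split $\nabla v_h|_{\p K}$ into its normal and tangential components and integrate by parts tangentially on each flat face $F$ via the surface divergence theorem, moving tangential derivatives onto the single-valued traces of $w$, while tracking the orientation of $n_F$ so that the two one-sided contributions to each interior face combine (with the trace of $w$ acting as the average) into a jump $\llb\cdot\rrb$ of $v_h$. Boundary faces require separate attention: there $\llb \nablaT v_h\rrb$ and $\llb v_h\rrb$ are one-sided traces, and the conditions $w=0$, $\nablaT w=0$ on $\DO$ suppress the contributions carrying $\llb\nablaT w\rrb$ and $\llb w\rrb$, but the normal-derivative traces $\lla\nabla w\cdot n_F\rra$ do not vanish and must be matched against the boundary-face integrals produced by the integration by parts. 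The regularity $s>5/2$ is exactly what ensures that the second-order traces entering these surface integrals exist, so that every manipulation above is justified.
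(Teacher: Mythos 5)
The paper does not prove this lemma; it defers to \cite{Smears2013,Smears2014}, and your outline follows essentially the same route as the argument given there: the jumps of $w$, $\nabla w\cdot n_F$ and $\nablaT w$ vanish by the $H^2(\Om)\cap H^1_0(\Om)$ regularity and the flatness of the faces, and the remaining volume terms are converted via elementwise integration by parts (including the Miranda--Talenti-type identity for $\int_K D^2w:D^2v_h$) into $\sum_K\pair{\Ll w}{\Ll v_h}_K$ plus face integrals that reassemble into the surviving terms of $\Blambda$. Your sketch correctly identifies the one genuinely nontrivial step, namely the elementwise Hessian identity with tangential surface operators and the orientation bookkeeping, and the role of $s>5/2$ in giving meaning to the second-order traces; carrying that step out in full is exactly the content of the cited references.
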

\begin{lemma}\label{lem:cdg_consistency}
Let $\Om$ be a bounded Lipschitz polytopal domain, let $\calT_h$ be a simplicial or parallelepipedal mesh on $\Om$. Let $I=(0,T)$ and let $\It=\{I_n\}_{n=1}^N$ be a regular partition of $I$. Suppose that $u_0 \in H^1_0(\Om) \cap H^{r}(\Om;\calT_h)$ with $r>3/2$. Then, for any $w\in \HIO \cap L^2(I;H^s(\Om;\calT_h))$, with $s>5/2$, such that $w(0)=u_0$, we have
\begin{equation}\label{eq:cdg_consistency}
\Cdg(w,v_h) = \SIn \SK \pair{\Lw w }{\Lw v_h}_K\, \d t + \omega\,a_h(u_0,v_h(0^+)) \qquad\forall\,v_h\in\Vth.
\end{equation}
\end{lemma}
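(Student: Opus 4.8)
The plan is to exploit the regularity of $w$ to show that every term in the definition \eqref{eq:cdg_definition} of $\Cdg(w,v_h)$ that carries a spatial or temporal jump of the first argument vanishes, leaving exactly the two terms on the right-hand side of \eqref{eq:cdg_consistency}. I would work through the definition term by term, separating the spatial contributions (the $\Bdg$ block and $\CFh$) from the two temporal face sums over the partition points.

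First I would dispose of the spatial terms. Since $w\in\HIO\cap L^2(I;H^s(\Om;\calT_h))$ with $s>5/2$, for a.e.\ $t\in I$ the time-slice $w(t)$ lies in $H^s(\Om;\calT_h)\cap H^2(\Om)\cap H^1_0(\Om)$, which is precisely the hypothesis of Lemma~\ref{lem:broken_identity}. Applying that lemma pointwise in time gives $\Blambda(w,v_h)=\SK\pair{\Ll w}{\Ll v_h}_K$ and $\Jo(w,v_h)=0$ for a.e.\ $t$; inserting these into the definition \eqref{eq:cordes_bdgr} of $\Bdg$ (the case $\theta=\half$) collapses $\Bdg(w,v_h)$ to $\SK\pair{\Ll w}{\Ll v_h}_K$, so the whole line $\SIn\Bdg(w,v_h)-\SK\pair{\Ll w}{\Ll v_h}_K\,\d t$ integrates to zero. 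For $\CFh(w,v_h)$ I would note that $w(t)\in H^2(\Om)$ has a single-valued normal-gradient trace across interior faces, so $\llb\nabla w\cdot n_F\rrb=0$ on $\calF_h^i$, while $w(t)\in H^1_0(\Om)$ forces $\llb w\rrb=0$ on every face (continuity across interior faces together with the zero boundary condition on boundary faces). Each summand of $\CFh(w,v_h)$ carries one of these two jumps, so $\CFh(w,v_h)=0$.

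It remains to evaluate the two temporal face sums. By Lemma~\ref{lem:gelfand_triple} we have $\HIO\hookrightarrow C(\overline{I};H^1_0(\Om))$, so $w$ is continuous in time with values in $H^1_0(\Om)$ and all interior temporal jumps vanish: $\lld w\rrd=w(t_n)-w(t_n^+)=0$ for $1\le n<N$. This annihilates the sum $\tfrac{\omega}{2}\sum_{n=1}^{N-1}a_h(\lld w\rrd,\lld v_h\rrd)$ entirely and, in $-\omega\sum_{n=0}^{N-1}a_h(\lld w\rrd,\llc v_h\rrc)$, leaves only the $n=0$ contribution. There the endpoint conventions give $\lld w\rrd[0]=-w(0^+)=-w(0)=-u_0$ and $\llc v_h\rrc[0]=v_h(0^+)$, so this term equals $\omega\,a_h(u_0,v_h(0^+))$. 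Collecting the surviving pieces then yields \eqref{eq:cdg_consistency}.

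I do not expect a deep obstacle: the argument is essentially bookkeeping once Lemma~\ref{lem:broken_identity} is available. The points needing care are the pointwise-in-time invocation of the spatial identity (justified by the a.e.\ membership of $w(t)$ in the required space) and the sign and one-sided-limit conventions of the temporal jump and average operators at the endpoints $n=0$ and $n=N$, which must be tracked precisely to recover the correct sign on $\omega\,a_h(u_0,v_h(0^+))$. One should also observe that, although $a_h$ was defined on $\Vh\times\Vh$, it extends to the argument $u_0\in H^r(\Om;\calT_h)$ with $r>3/2$, since the only terms in $a_h$ requiring extra regularity are the normal-gradient face traces, which remain well defined under this hypothesis.
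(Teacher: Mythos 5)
Your proposal is correct and follows essentially the same route as the paper's proof: Lemma~\ref{lem:broken_identity} collapses the $\Bdg$ block, the $H^2(\Om)\cap H^1_0(\Om)$ regularity of the time-slices kills every term of $\CFh(w,v_h)$, and the embedding $\HIO\hookrightarrow C(\overline{I};H^1_0(\Om))$ removes all interior temporal jumps, leaving only the $n=0$ contribution $\omega\,a_h(u_0,v_h(0^+))$. Your additional remarks on the endpoint sign conventions and on the well-definedness of $a_h(u_0,\cdot)$ for $u_0\in H^r(\Om;\calT_h)$, $r>3/2$, are accurate refinements of the same argument.
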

\begin{proof}
Let the function $w$ be as above, so that $w(t) \in H^2(\Om)\cap H^1_0(\Om)\cap H^s(\Om;\calT_h)$ for a.e. $t\in I$. Lemma~\ref{lem:broken_identity} shows that
$\int_{I_n} \Bdg(w,v_h) \, \d t = \int_{I_n} \SK \pair{\Ll w}{\Ll v_h}_K \, \d t$ for all $I_n\in\It$ and all $ v_h\in\Vth$.
The spatial regularity of $w$ also implies that $\llb \nabla w(t) \cdot n_F \rrb$ vanishes for all $F\in\calF_h^i$ and a.e.\ $t\in I$, whilst $\llb w(t) \rrb$ and $\llb \nablaT w(t) \rrb$ vanish for all $F\in\calFhib$ and a.e.\ $t\in I$. Therefore we have
$\CFh(w,v) = 0$ for all $v_h\in\Vth$.
Finally, since $\HIO \hookrightarrow C(\overline{I};H^1_0(\Om))$ by Lemma~\ref{lem:gelfand_triple}, the jump $\lld w \rrd = 0$ for each $0< n < N$, and thus $a_h(\lld w \rrd , v_h ) = 0$ for all $v_h\in \Vh$, $0 < n < N$.
The above identities and the definition of $\Cdg$ in \eqref{eq:cdg_definition} imply \eqref{eq:cdg_consistency}.
\qed
\end{proof}

Lemma~\ref{lem:cdg_consistency} and the definition of the nonlinear form $\Adg$ in \eqref{eq:adg_definition} immediately imply the following consistency result for the numerical scheme.
\begin{corollary}\label{cor:scheme_consistency}
Under the hypotheses of Lemma~\ref{lem:cdg_consistency}, suppose that the solution $u\in \HIO$ of \eqref{eq:HJB} belongs to $L^2(I;H^s(\Om;\calT_h))$, with $s>5/2$. Then, $u$ satisfies
\begin{equation}
\Adg(u;v_h) = \omega\,a_h(u_0,v_h(0^+)) \qquad\forall\,v_h\in\Vth.
\end{equation}
\end{corollary}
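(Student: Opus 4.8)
The plan is to substitute the exact solution $u$ directly into the definition \eqref{eq:adg_definition} of $\Adg$ and invoke the consistency of $\Cdg$ already established in Lemma~\ref{lem:cdg_consistency}; the statement then follows by an exact cancellation together with the governing equation $\Fg[u]=0$.

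First I would check that $u$ is an admissible argument for Lemma~\ref{lem:cdg_consistency}. By Theorem~\ref{thm:hjb_well_posed} the strong solution $u$ lies in $\HIO$ and satisfies the Cauchy condition $u(0)=u_0$, while the corollary additionally supposes $u\in L^2(I;H^s(\Om;\calT_h))$ with $s>5/2$, and the ambient hypotheses of Lemma~\ref{lem:cdg_consistency} furnish $u_0\in H^1_0(\Om)\cap H^r(\Om;\calT_h)$ with $r>3/2$. Taking $w=u$ in Lemma~\ref{lem:cdg_consistency} therefore yields
\[
\Cdg(u,v_h)=\SIn\SK\pair{\Lw u}{\Lw v_h}_K\,\d t+\omega\,a_h(u_0,v_h(0^+))\qquad\forall\,v_h\in\Vth.
\]

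Next I would expand $\Adg(u;v_h)$ using \eqref{eq:adg_definition} and insert this identity for $\Cdg(u,v_h)$. The term $\SIn\SK\pair{\Lw u}{\Lw v_h}_K\,\d t$ that appears with a minus sign inside \eqref{eq:adg_definition} cancels exactly against the identical term supplied by Lemma~\ref{lem:cdg_consistency}, leaving
\[
\Adg(u;v_h)=\SIn\SK\pair{\Fg[u]}{\Lw v_h}_K\,\d t+\omega\,a_h(u_0,v_h(0^+)).
\]
Finally, since $u$ solves \eqref{eq:HJB}, Theorem~\ref{thm:hjb_well_posed} guarantees $\Fg[u]=0$ a.e.\ in $\Om\times I$, so the remaining integral vanishes and the claimed identity holds.

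I do not expect any genuine obstacle: all of the analytical substance is contained in Lemma~\ref{lem:cdg_consistency} (the weak enforcement of the broken integration-by-parts identities and the vanishing of the face and jump contributions for a solution of the stated regularity) and in the characterisation $\Fg[u]=0$ from Theorem~\ref{thm:hjb_well_posed}. The corollary merely assembles these facts, and the key cancellation is by design: the $\pair{\Lw u}{\Lw v_h}_K$ terms were inserted into $\Adg$ precisely so as to annihilate those produced by $\Cdg$ on the exact solution, mirroring the addition--subtraction device used in the proof of Theorem~\ref{thm:hjb_well_posed}.
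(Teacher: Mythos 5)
Your argument is correct and coincides with the paper's own reasoning: the paper states that Lemma~\ref{lem:cdg_consistency} and the definition \eqref{eq:adg_definition} of $\Adg$ immediately imply the corollary, which is precisely the cancellation of the $\pair{\Lw u}{\Lw v_h}_K$ terms followed by $\Fg[u]=0$ that you spell out. No gaps.
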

\section{Stability}\label{sec:stability}
It will be seen below that, for $\mu_F$ appropriately chosen, the symmetric bilinear form $a_h$ is coercive on $\Vh$, and thus defines an inner-product on $\Vh$, with associated norm $\normah{v_h}^2 \coloneqq a_h(v_h,v_h)$ for $v_h\in\Vh$. Define the functionals 
\begin{align}
&\abs{v_h}_{H^2(K),\lambda}^2 \coloneqq \abs{v_h}_{H^2(K)}^2 + 2\lambda \abs{v_h}_{H^1(K)}^2 + \lambda^2\norm{v_h}_{L^2(K)}^2, & &v_h\in \Vh,\;K\in\calT_h,\\
&\absJ{v_h}^2 \coloneqq \Jo(v_h,v_h), & & v_h\in\Vh.
\end{align}
For each $\theta \in [0,1]$, we introduce the functional $\normdgr{\cdot}\colon\Vth\tends\R$ defined by
\begin{multline}
\normdgr{v_h}^2 \coloneqq \SIn \SK \theta  \left[\omega^2\norm{\p_t v_h}_{L^2(K)}^2 + \abs{v_h}_{H^2(K),\lambda}^2 \right] + \absJ{v_h}^2\, \d t 
\\ +  \SIn \SK \left(1-\theta\right) \norm{\Lw v_h}_{L^2(K)}^2  \d t
+ \omega \sum_{n=0}^{N} \normah{\lld v_h \rrd}^2.
\end{multline}
It is shown below that, for an appropriate choice of $\mu_F$, $\normdgr{\cdot}$ defines a norm on $\Vth$ for each $\theta\in [0,1]$.
For each face $F \in \calFhib$, define
\begin{equation}\label{eq:def_tilde_ph}
\tilde{h}_F \coloneqq
  \begin{cases}
  \min(h_K,h_{K^\prime}), &\text{if }F \in \calF^i_h, \\
  h_K, &\text{if }F \in \calF^b_h,
  \end{cases}
  \qquad
   \tilde{p}_F \coloneqq
  \begin{cases}
  \max(p_K,p_{K^\prime}), &\text{if }F \in \calF^i_h, \\
  p_K, &\text{if }F \in \calF^b_h,
  \end{cases}
\end{equation}
where $K$ and $K^\prime$ are such that $F = \p K \cap \p K^\prime$ if $F\in\calF^i_h$ or $F \subset \p K \cap \DO$ if $F\in\calF^b_h$.
The following result is from \cite[Lemma 6]{Smears2014}.
\begin{lemma}\label{lem:space_dg_coercivity}
Let $\Om$ be a bounded convex polytopal domain and let $\{\calT_h\}_h$ be a shape-regular sequence of simplicial or parallelepipedal meshes satisfying \eqref{eq:card_F_bound}. Then, for each constant $\kappa >1$, there exists a positive constant $\cstab$, independent of $h$, $\bo p$ and $\theta$, such that, for any $v_h\in\Vh$ and any $\theta\in[0,1]$, we have
\begin{equation}\label{eq:dg_coercivity}
\Bdgr(v_h,v_h) \geq \SK \left[ \frac{\theta}{\kappa}\abs{v_h}_{H^2(K),\lambda}^2+\left(1-\theta\right)\norm{\Ll v_h}_{L^2(K)}^2\right]+\frac{1}{2}\absJ{v_h}^2,
\end{equation}
whenever, for any fixed constant $\sigma\geq 1$,
\begin{equation}\label{eq:dg_stabilisation}
 \mu_F = \sigma \cstab \frac{\tilde{p}_F^2}{\tilde{h}_F} \quad\text{and}\quad
\eta_F > \sigma \lambda\, \cstab \frac{\tilde{p}_F^2}{\tilde{h}_F}.
\end{equation}
\end{lemma}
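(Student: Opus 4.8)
The plan is to set $u_h=v_h$ in the definition \eqref{eq:cordes_bdgr} of $\Bdgr$ and reduce the claim to an absorption argument for the face terms of $\Blambda$. With $u_h=v_h$, the summand $(1-\theta)\SK\pair{\Ll v_h}{\Ll v_h}_K$ reproduces exactly the $(1-\theta)\norm{\Ll v_h}_{L^2(K)}^2$ contribution on the right of \eqref{eq:dg_coercivity}, and $\Jo(v_h,v_h)=\absJ{v_h}^2$. Moreover the element (volume) integrals of $\Blambda(v_h,v_h)$ collapse to $\SK\abs{v_h}_{H^2(K),\lambda}^2$ by the definition of $\abs{\cdot}_{H^2(K),\lambda}$. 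Hence, writing $R(v_h)\coloneqq \Blambda(v_h,v_h)-\SK\abs{v_h}_{H^2(K),\lambda}^2$ for the collection of face integrals, it suffices to prove
\[
\theta R(v_h) + \tfrac{1}{2}\absJ{v_h}^2 \;\geq\; -\,\theta\Bigl(1-\tfrac{1}{\kappa}\Bigr)\SK\abs{v_h}_{H^2(K),\lambda}^2,
\]
since this leaves $\theta\,\kappa^{-1}\SK\abs{v_h}_{H^2(K),\lambda}^2+\tfrac12\absJ{v_h}^2$ on the right, as required.

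Next I would estimate the four families of face integrals in $R(v_h)$. Each is an $L^2(F)$ pairing of an \emph{average} of a derivative of $v_h$ with a \emph{jump}: $\DivT\nablaT\lla v_h\rra$ and $\lambda\lla v_h\rra$ pair with $\llb\nabla v_h\cdot n_F\rrb$, while $\nablaT\lla\nabla v_h\cdot n_F\rra$ and $\lambda\lla\nabla v_h\cdot n_F\rra$ pair with $\llb\nablaT v_h\rrb$ and $\llb v_h\rrb$ respectively. To each I would apply Cauchy--Schwarz on $F$ and a weighted Young inequality with parameter $\delta>0$, assigning weight $\tilde{h}_F/\tilde{p}_F^2$ to the average factor and $\tilde{p}_F^2/\tilde{h}_F$ to the jump factor, together with a discrete trace (inverse) inequality $\norm{w}_{L^2(F)}^2\leq \ctr\,\tilde{p}_F^2\tilde{h}_F^{-1}\norm{w}_{L^2(K)}^2$ that bounds the average factor by an element seminorm (legitimate because $\DivT\nablaT v_h$ and $\nabla v_h$ are polynomials, so the correct scaling $\tilde{p}_F^2/\tilde{h}_F$ appears). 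The delicate bookkeeping concerns the powers of $\lambda$. For $\lambda\lla v_h\rra\cdot\llb\nabla v_h\cdot n_F\rrb$ I would keep the full $\lambda$ on the average, so its trace bound lands in $\lambda^2\norm{v_h}_{L^2(K)}^2$ and its jump is penalised by $\mu_F$; whereas for $\lambda\lla\nabla v_h\cdot n_F\rra\cdot\llb v_h\rrb$ I would split $\lambda=\sqrt{\lambda}\cdot\sqrt{\lambda}$, so the average lands in $2\lambda\abs{v_h}_{H^1(K)}^2$ while the jump $\sqrt{\lambda}\,\llb v_h\rrb$ is penalised by $\eta_F$---precisely why $\eta_F$ must carry the extra factor $\lambda$. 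The two pure second-order families are bounded analogously against $\abs{v_h}_{H^2(K)}^2$ and $\mu_F$.

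Summing the face estimates back over elements, I would use the mesh-regularity hypotheses to keep the constants uniform: \eqref{eq:card_F_bound} bounds the number of faces per element, so each element seminorm is charged at most $\cf$ times, while \eqref{eq:c_h_bound} and \eqref{eq:c_p_bound} guarantee that $\tilde{h}_F$ and $\tilde{p}_F$ are comparable to the local $h_K$ and $p_K$ on both sides of an interior face, so the trace weights telescope correctly against $\mu_F$ and $\eta_F$. Choosing $\delta$ small enough that the accumulated average contributions do not exceed $\theta(1-\kappa^{-1})\SK\abs{v_h}_{H^2(K),\lambda}^2$ (possible since $\kappa>1$ leaves genuine slack), and then choosing $\cstab$ large enough---depending on $\kappa$, on $\ctr$, and on $\cf,\ch,\cp$, but crucially \emph{not} on $\theta$, $h$ or $\bo p$---that the accumulated jump contributions total at most $\tfrac12\absJ{v_h}^2$, completes the absorption and yields \eqref{eq:dg_coercivity}. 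Since $\sigma\geq1$ only strengthens $\mu_F$ and $\eta_F$, the same $\cstab$ serves for every admissible $\sigma$.

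I expect the main obstacle to be making all constants simultaneously uniform in $\theta\in[0,1]$ and in the discretisation parameters. Two points are delicate. First, the face integrals enter $\Bdgr$ multiplied by $\theta$ while $\Jo$ does not, so the half-strength penalty $\tfrac12\absJ{v_h}^2$ must absorb jump contributions of size $\theta/(\sigma\cstab)$ uniformly from $\theta=0$ (trivial) up to $\theta=1$. Second, the two-sided Young splitting must be calibrated so that a \emph{single} choice of $\delta$ and $\cstab$ controls both the pure-second-order terms and the two $\lambda$-weighted cross terms, whose $\lambda$-powers are distributed differently as above; getting the trace constant $\ctr$ to interact correctly with the penalty scaling $\tilde{p}_F^2/\tilde{h}_F$ under the mesh-regularity bounds is the technical heart of the estimate.
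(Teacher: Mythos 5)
Your proposal is correct and follows essentially the argument of the proof this paper relies on: the paper does not reprove this lemma but imports it from \cite{Smears2014}, where the coercivity is established exactly by the decomposition you describe (isolate the face terms of $\Blambda$, apply Cauchy--Schwarz and $\tilde{h}_F/\tilde{p}_F^2$-weighted Young inequalities with polynomial trace--inverse estimates, absorb the averages into $\theta(1-\kappa^{-1})\SK\abs{v_h}_{H^2(K),\lambda}^2$ and the jumps into $\tfrac{1}{2}\absJ{v_h}^2$ by choosing $\cstab$ large, with the $\sqrt{\lambda}$-splitting explaining the $\lambda$-scaling of $\eta_F$). The only cosmetic remark is that the one-sided bounds $\tilde{h}_F\leq h_K$ and $\tilde{p}_F\geq p_K$ already follow from the definitions in \eqref{eq:def_tilde_ph}, so \eqref{eq:c_h_bound} and \eqref{eq:c_p_bound} are not actually needed here, consistent with the lemma assuming only \eqref{eq:card_F_bound}.
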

We note that $\mu_F$ may be chosen as in Lemma~\ref{lem:space_dg_coercivity} whilst also guaranteeing the standard discrete Poincar\'{e} Inequality:
\begin{equation}\label{eq:a_h_coercivity}
\sum_{K\in\calT_h} \norm{v_h}_{H^1(K)}^2 + \sum_{F\in\calFhib}\mu_F\norm{\llb v_h\rrb}_{L^2(F)}^2 \lesssim a_h(v_h,v_h) = \normah{v_h}^2 \qquad \forall\, v_h\in\Vh.
\end{equation}
In the subsequent analysis, we shall choose $\mu_F$ and $\eta_F$ to be given by
\begin{align}\label{eq:mu_eta}
\mu_F &\coloneqq \sigma \, \cstab \frac{\tilde{p}_F^2}{\tilde{h}_F}, 
& \eta_F &\coloneqq \sigma \max(1,\lambda)\, \cstab \frac{\tilde{p}_F^6}{\tilde{h}_F^3},
\end{align}
where $\cstab$ is chosen so that Lemma~\ref{lem:space_dg_coercivity} holds for $\kappa < (1-\eps)^{-1}$, and where $\sigma\geq 1$ is a fixed constant chosen such that \eqref{eq:a_h_coercivity} also holds. Note that these orders of penalisation are the strongest that remain consistent with the discrete $H^2$-type norm appearing in the analysis of this work; see \cite{Mozolevski2007} for an example of a scheme for the biharmonic equation using the same penalisation orders.

To verify that the functional $\normdgr{\cdot}$ defines a norm on $\Vth$, suppose that $\normdgr{v_h}=0$ for some $v_h\in\Vth$. Then, the jumps of $v_h$ vanish across the mesh faces and across time intervals and, therefore, $v_h\in\HIO$ with $v_h(0)=0$. The fact that the volume terms in $\normdgr{v_h}$ also vanish shows that $\Lw v_h=0$, so it follows from \eqref{eq:heat_bound} that $v_h \equiv 0$. Hence, the functional $\normdgr{\cdot}$ defines a norm on $\Vth$.
\begin{lemma}\label{lem:cdg_coercivity}
Under the hypotheses of Lemma~\ref{lem:space_dg_coercivity}, let $I=(0,T)$ and $\{\It\}_\tau$ be a sequence of regular partitions of $I$. Let $\mu_F$ and $\eta_F$ satisfy \eqref{eq:mu_eta} for each face $F$, so that Lemma~\ref{lem:space_dg_coercivity} holds for a given $\kappa >1$. Then, for every $v_h\in\Vth$, we have
\begin{multline}\label{eq:cdg_coercivity}
\Cdg(v_h,v_h) \geq \frac{1}{2} \SIn \SK \omega^2 \norm{\p_t v_h}_{L^2(K)}^2 + \kappa^{-1} \abs{v_h}_{H^2(K),\lambda}^2 + \absJ{v_h}^2\, \d t
\\  + \frac{1}{2}\SIn \SK \norm{\Lw v_h}_{L^2(K)}^2  \d t
+ \frac{\omega}{2} \sum_{n=0}^{N} \normah{\lld v_h \rrd}^2.
\end{multline}
\end{lemma}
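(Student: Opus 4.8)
The plan is to reproduce at the fully-discrete level the symmetrisation argument that leads from the definition \eqref{eq:c_tau_definition_2} of the semi-discrete form $C_\tau$ to its symmetric representation \eqref{eq:c_tau_definition_3} and then to the coercivity bound \eqref{eq:c_tau_stability}. In the present setting the symmetric interior penalty form $a_h$ plays the role of the $H^1_0$-inner product, and the face form $\CFh$ plays the role of the integration-by-parts identity for $\Ll$, being designed precisely to absorb the inter-element contributions that arise because functions in $\Vth$ are broken across $\calF^i_h$. I would set $v_h=u_h$ from the outset and expand, element by element, $\norm{\Lw u_h}_{L^2(K)}^2 = \omega^2\norm{\p_t u_h}_{L^2(K)}^2 - 2\omega\pair{\p_t u_h}{\Ll u_h}_K + \norm{\Ll u_h}_{L^2(K)}^2$ in the volume term of \eqref{eq:cdg_definition}. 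The $\SK\norm{\Ll u_h}_{L^2(K)}^2$ so produced cancels against the $-\SK\pair{\Ll u_h}{\Ll v_h}_K$ carried in the $\Bdg$-line of $\Cdg$, leaving $\SIn\SK[\omega^2\norm{\p_t u_h}_{L^2(K)}^2 - 2\omega\pair{\p_t u_h}{\Ll u_h}_K]\,\d t + \SIn\Bdg(u_h,u_h)\,\d t$ together with $\CFh(u_h,u_h)$ and the temporal-jump line. I then split the cross term $-2\omega\pair{\p_t u_h}{\Ll u_h}_K$ into two equal halves: one is retained in the volume and, via the identity $\omega^2\norm{\p_t u_h}_{L^2(K)}^2 - \omega\pair{\p_t u_h}{\Ll u_h}_K = \tfrac12\omega^2\norm{\p_t u_h}_{L^2(K)}^2 + \tfrac12\norm{\Lw u_h}_{L^2(K)}^2 - \tfrac12\norm{\Ll u_h}_{L^2(K)}^2$, reconstitutes $\tfrac12\omega^2\norm{\p_t u_h}_{L^2(K)}^2 + \tfrac12\norm{\Lw u_h}_{L^2(K)}^2$ up to the artificial term $-\tfrac12\SK\norm{\Ll u_h}_{L^2(K)}^2$; the other half is kept for integration by parts.

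The crux of the argument is the integration by parts of the remaining copy $-\omega\SIn\SK\pair{\p_t u_h}{\Ll u_h}_K\,\d t$. Using Green's identity on each $K$ and $\Ll=\Delta-\lambda$, one writes $\pair{\p_t u_h}{\Ll u_h}_K = -\tfrac12\tfrac{\d}{\d t}\bigl[\abs{u_h}_{H^1(K)}^2 + \lambda\norm{u_h}_{L^2(K)}^2\bigr] + \int_{\p K}\p_t u_h\,(\nabla u_h\cdot n_K)$. The broken $H^1$ time-derivatives telescope across the intervals $I_n$, while the element-boundary integrals $\SK\int_{\p K}\p_t u_h\,(\nabla u_h\cdot n_K)$ reassemble, through the standard conversion of element-boundary sums into face sums of jumps and averages, into precisely the face terms carried by $\CFh(u_h,u_h)$ together with further face terms that are themselves integrated by parts in time. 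Collecting the volume- and face-telescoping contributions at each node $t_n$, and combining them with the temporal-jump line of \eqref{eq:cdg_definition}, one recognises the symmetric-plus-antisymmetric combination
\begin{equation*}
\tfrac{\omega}{2}\sum_{n=1}^{N} a_h(\llc u_h\rrc,\lld u_h\rrd) - \tfrac{\omega}{2}\sum_{n=0}^{N-1} a_h(\lld u_h\rrd,\llc u_h\rrc) + \tfrac{\omega}{2}\sum_{n=1}^{N-1} a_h(\lld u_h\rrd,\lld u_h\rrd),
\end{equation*}
in exact analogy with the temporal terms of \eqref{eq:c_tau_definition_3}.

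The remaining bookkeeping is identical to the passage from \eqref{eq:c_tau_definition_3} to \eqref{eq:c_tau_stability}: by symmetry of $a_h$ the first two sums telescope, and with the conventions $\lld u_h\rrd=-u_h(0^+)$, $\llc u_h\rrc=u_h(0^+)$ at $n=0$ and $\lld u_h\rrd=\llc u_h\rrc=u_h(T)$ at $n=N$, the antisymmetric part collapses to $\tfrac{\omega}{2}\normah{u_h(T)}^2+\tfrac{\omega}{2}\normah{u_h(0^+)}^2$, which together with the interior jump-squares yields $\tfrac{\omega}{2}\sum_{n=0}^{N}\normah{\lld u_h\rrd}^2$. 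This establishes the identity
\begin{multline*}
\Cdg(u_h,u_h) = \SIn\Bigl[\tfrac12\SK\bigl(\omega^2\norm{\p_t u_h}_{L^2(K)}^2 + \norm{\Lw u_h}_{L^2(K)}^2\bigr)\\ + \Bdg(u_h,u_h) - \tfrac12\SK\norm{\Ll u_h}_{L^2(K)}^2\Bigr]\,\d t + \tfrac{\omega}{2}\sum_{n=0}^{N}\normah{\lld u_h\rrd}^2.
\end{multline*}

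Finally, I apply Lemma~\ref{lem:space_dg_coercivity} with $\theta=\tfrac12$: the bound \eqref{eq:dg_coercivity} gives $\Bdg(u_h,u_h)-\tfrac12\SK\norm{\Ll u_h}_{L^2(K)}^2 \geq \tfrac{1}{2\kappa}\SK\abs{u_h}_{H^2(K),\lambda}^2 + \tfrac12\absJ{u_h}^2$, with the $\tfrac12\norm{\Ll u_h}_{L^2(K)}^2$ produced by the lemma cancelling the artificial term exactly, and this delivers \eqref{eq:cdg_coercivity}. The main obstacle is the second step: verifying that the element-boundary integrals generated by the spatial integration by parts, after their conversion to face sums and the subsequent integration by parts in time, reproduce $\CFh$ and the $a_h$-jump terms term by term, with the signs correctly dictated by the orientation convention for $n_F$ and by the definitions of $\lld\cdot\rrd$ and $\llc\cdot\rrc$ at the endpoints $n=0$ and $n=N$.
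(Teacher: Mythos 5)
Your proposal is correct and follows essentially the same route as the paper: the paper first derives the polarised identity $\Cdg(u_h,v_h)$ via elementwise integration by parts in space (producing $\omega\frac{\d}{\d t}a_h(u_h,v_h)$ and the face terms absorbed by $\CFh$), averages it with the definition of $\Cdg$, sets $u_h=v_h$ so the $\CFh$ terms cancel and the temporal $a_h$-terms telescope to $\frac{\omega}{2}\sum_{n=0}^N\normah{\lld v_h\rrd}^2$, and then invokes Lemma~\ref{lem:space_dg_coercivity}. Your only deviations are cosmetic — you work on the diagonal from the outset rather than through the polarised identity, and you apply the coercivity lemma with $\theta=\tfrac12$ to $\Bdg-\tfrac12\SK\norm{\Ll v_h}_{L^2(K)}^2$ instead of rewriting that combination as $\tfrac12 B_{h,1}+\tfrac12 J_h$ and using $\theta=1$; both give the identical bound, and your intermediate identity is exactly the paper's \eqref{eq:cdg_stability} restricted to the diagonal.
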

\begin{proof}
We begin by showing that, for any $u_h$, $v_h\in\Vth$, the bilinear form $\Cdg$ satisfies the following identity:
\begin{multline}\label{eq:cdg_symmetric}
\Cdg(u_h,v_h) = \SIn \SK \omega^2 \pair{\p_t u_h}{\p_t v_h}_K + \Bdg(u_h,v_h) \, \d t - \CFh(v_h, u_h)
\\ + \omega \sum_{n=1}^N a_h(\llc u_h \rrc, \lld v_h \rrd) + \frac{\omega}{2} \sum_{n=1}^{N-1} a_h(\lld u_h \rrd, \lld v_h \rrd).
\end{multline}
The first step in deriving \eqref{eq:cdg_symmetric} is to show  that for any $u_h$, $v_h\in \Vth$, we have
\begin{multline}\label{eq:discrete_bochner}
\SIn \SK \pair{ \omega\, \p_t u_h}{-\Ll v_h}_K + \pair{\omega \,\p_t v_h}{-\Ll u_h}_K \,\d t
\\= \omega\sum_{n=1}^N a_h(\llc u_h \rrc,\lld v_h \rrd) + \omega\sum_{n=0}^{N-1} a_h(\lld u_h \rrd, \llc v_h \rrc) - \CFh(u_h,v_h) - \CFh(v_h,u_h).
\end{multline}
Indeed, integration by parts over $\calT_h$ shows that, for any $I_n\in\It$ and a.e.\ $t\in I_n$,
\begin{multline}
\SK \pair{\omega\, \p_t u_h}{-\Ll v_h}_K
= \omega \SK \pair{\nabla \p_t u_h}{\nabla v_h}_K + \lambda \pair{\p_t u_h}{v_h}_K
\\ - \omega \sum_{F\in\calF_h^i}\pair{\lla \p_t u_h \rra}{\llb \nabla v_h\cdot n_F\rrb}_F  - \omega \sum_{F\in\calFhib} \pair{\llb \p_t u_h \rrb}{\lla \nabla v_h \cdot n_F \rra}_F.
\end{multline}
Therefore, it is found that, for any $I_n\in\It$ and a.e.\ $t\in I_n$,
\begin{multline}\label{eq:cdg_symmetric_1}
\SK   \pair{ \omega\, \p_t u_h}{-\Ll v_h}_K  + \pair{\omega \,\p_t v_h}{-\Ll u_h}_K 
\\  = \omega \frac{\d }{\d t}\, a_h(u_h,v_h) - \omega\sum_{F\in\calFhib}\mu_F \left[\pair{\llb \p_t u_h\rrb}{\llb v_h\rrb}_F  + \pair{\llb u_h\rrb}{\llb \p_t v_h\rrb}_F\right]
\\  - \omega \sum_{F\in\calF_h^i}\left[\pair{\lla \p_t u_h \rra}{\llb \nabla v_h\cdot n_F\rrb}_F 
+ \pair{\lla \p_t v_h \rra}{\llb \nabla u_h\cdot n_F\rrb}_F \right]
\\  + \omega \sum_{F\in\calFhib} \left[
\pair{\llb v_h \rrb}{\lla \nabla \p_t u_h \cdot n_F \rra}_F+
\pair{\llb u_h \rrb}{\lla \nabla \p_t v_h \cdot n_F \rra}_F 
\right].
\end{multline}
We obtain \eqref{eq:discrete_bochner} upon integration and summation of \eqref{eq:cdg_symmetric_1} over all time intervals.
So, we have
\begin{multline*}
\SIn \SK \pair{\Lw u_h}{\Lw v_h}_K \d t
= \SIn \SK \omega^2 \pair{\p_t u_h}{\p_t v_h}_K + \pair{\Ll u_h}{\Ll v_h}_K  \d t
\\
+ \omega \sum_{n=1}^N a_h(\llc u_h \rrc,\lld v_h \rrd) + \omega\sum_{n=0}^{N-1} a_h(\lld u_h \rrd, \llc v_h \rrc) - \CFh(u_h,v_h) - \CFh(v_h,u_h).
\end{multline*}
The proof of \eqref{eq:cdg_symmetric} is then completed by substituting the above identity in the definition of $\Cdg$ from \eqref{eq:cdg_definition}.
Expanding $\Cdg$ with both \eqref{eq:cdg_definition} and \eqref{eq:cdg_symmetric} shows that
\begin{multline}\label{eq:cdg_stability}
\Cdg(u_h,v_h) = \frac{1}{2} \SIn \SK \omega^2 \pair{\p_t u_h}{\p_t v_h}_K + B_{h,1}(u_h,v_h) + \Jo(u_h,v_h) \, \d t
\\ + \frac{1}{2} \SIn \SK \pair{\Lw u_h}{\Lw v_h}_K \, \d t + \frac{1}{2} \,\CFh(u_h,v_h)- \frac{1}{2} \,\CFh(v_h,u_h)
\\ + \frac{\omega}{2} \sum_{n=1}^N a_h(\llc u_h \rrc, \lld v_h \rrd)
- \frac{\omega}{2} \sum_{n=0}^{N-1} a_h(\lld u_h \rrd,\llc v_h \rrc)
 + \frac{\omega}{2} \sum_{n=1}^{N-1} a_h(\lld u_h \rrd, \lld v_h \rrd).
\end{multline}
Note that to get \eqref{eq:cdg_stability}, we have used the identity 
\[\Bdg(u_h,v_h)- \frac{1}{2} \SK \pair{\Ll u_h}{\Ll v_h}_K  = \frac{1}{2} B_{h,1}(u_h,v_h) + \frac{1}{2} \Jo(u_h,v_h).
\]
To show \eqref{eq:cdg_coercivity}, we substitute $u_h = v_h$ in \eqref{eq:cdg_stability} and first observe that the flux terms involving $\CFh$ cancel. Furthermore, the symmetry of the bilinear form $a_h$ implies that
\begin{multline*}
 \sum_{n=1}^N a_h(\llc v_h \rrc, \lld v_h \rrd)
- \sum_{n=0}^{N-1} a_h(\lld v_h \rrd,\llc v_h \rrc)
 +  \sum_{n=1}^{N-1} \normah{\lld v_h \rrd}^2
\\=  a_h(v_h(T),v_h(T))+ a_h(v_h(0^+),v_h(0^+))+ \sum_{n=1}^{N-1} \normah{\lld v_h \rrd}^2
 = \sum_{n=0}^{N} \normah{\lld v_h \rrd}^2.
\end{multline*}
Then, we apply Lemma~\ref{lem:space_dg_coercivity} for $\theta = 1$ to get $B_{h,1}(v_h,v_h)\geq \kappa^{-1} \SK \abs{v_h}_{H^2(K),\lambda}^2$, thereby yielding \eqref{eq:cdg_coercivity}.\qed
\end{proof}

Recall that for a function $v_h\in\Vth$, the support of $v_h$ is a subset of $\overline{I}$, since $v_h$ is viewed as a mapping from $I$ into $\Vh$.
\begin{theorem}\label{thm:dg_stability}
Let $\Om$ be a bounded convex polytopal domain and let $\{\calT_h\}_h$ be a shape-regular sequence of meshes satisfying \eqref{eq:card_F_bound}. Let $I=(0,T)$ and let $\{\It\}_{\tau}$ be a sequence of regular partitions of $I$. Let $\Ld$ be a compact metric space and let the data $a$, $b$, $c$ and $f$ be continuous on $\Ob\times \overline{I}\times \Ld$ and satisfy \eqref{eq:uniform_ellipticity} and \eqref{eq:cordes_condition3}, or alternatively \eqref{eq:cordes_condition2} in the case where $b\equiv 0$ and $c\equiv 0$. Assume that the initial data $u_0 \in H^1_0(\Om)\cap H^s(\Om;\calT_h)$ with $s>3/2$. Let $\mu_F$ and $\eta_F$ satisfy \eqref{eq:mu_eta}, with $\cstab$ chosen so that Lemmas~\textnormal{\ref{lem:space_dg_coercivity}}~and~\textnormal{\ref{lem:cdg_coercivity}} hold with $\kappa<(1-\eps)^{-1}$. Then, for every $z_h$, $v_h\in\Vth$, we have
\begin{equation}\label{eq:adg_monotonicity}
\norm{z_h-v_h}_{h,1}^2 \leq C \left( \Adg(z_h;z_h-v_h)-\Adg(v_h;z_h-v_h)\right),
\end{equation}
where the constant $C\coloneqq 2\kappa/(1-\kappa \left(1-\eps\right))$.
Moreover, $\Adg$ is interval-wise Lipschitz continuous, in the sense that there exists a constant $C$, independent of the discretisation parameters, such that, for any $I_n\in\It$ and any $u_h$, $v_h$ and $z_h\in \Vth$ with support contained in $\overline{I_n}$, we have
\begin{equation}\label{eq:adg_lipschitz}
\abs{ \Adg(u_h;z_h)-\Adg(v_h;z_h)} \leq C \norm{u_h-v_h}_{h,1} \norm{z_h}_{h,1}.
\end{equation}
Therefore, there exists a unique solution $u_h\in\Vth$ of the numerical scheme \eqref{eq:numerical_scheme}. 
\end{theorem}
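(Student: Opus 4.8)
The plan is to establish the three assertions in order: the strong monotonicity estimate \eqref{eq:adg_monotonicity}, the interval-wise Lipschitz continuity \eqref{eq:adg_lipschitz}, and finally existence and uniqueness via an induction over the time intervals based on the Browder--Minty Theorem. The first estimate is a discrete mirror of the monotonicity argument in the proof of Theorem~\ref{thm:hjb_well_posed}, whereas the second is the genuinely technical ingredient.

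For \eqref{eq:adg_monotonicity}, set $w_h\coloneqq z_h-v_h$. Since $\Cdg$ is bilinear and $\Lw$ is linear, the definition \eqref{eq:adg_definition} yields the addition--subtraction identity
\[
\Adg(z_h;w_h)-\Adg(v_h;w_h)=\SIn\SK\pair{\Fg[z_h]-\Fg[v_h]-\Lw w_h}{\Lw w_h}_K\,\d t+\Cdg(w_h,w_h).
\]
I would bound $\Cdg(w_h,w_h)$ from below by Lemma~\ref{lem:cdg_coercivity}, writing the volume part as $\tfrac{1}{2\kappa}M+\tfrac12 B^2$ with $M\coloneqq\SIn\SK(\omega^2\norm{\p_t w_h}_{L^2(K)}^2+\abs{w_h}_{H^2(K),\lambda}^2)\,\d t$ and $B^2\coloneqq\SIn\SK\norm{\Lw w_h}_{L^2(K)}^2\,\d t$, while retaining the jump contributions $\tfrac12\SIn\absJ{w_h}^2\,\d t$ and $\tfrac{\omega}{2}\sum_{n}\normah{\lld w_h\rrd}^2$. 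The remaining term is estimated by the Cauchy--Schwarz Inequality followed by the pointwise Cordes bound of Lemma~\ref{lem:cordes_bound}, applied on each $K$ and each $I_n$; squaring and integrating gives $\SIn\SK\norm{\Fg[z_h]-\Fg[v_h]-\Lw w_h}_{L^2(K)}^2\,\d t\le(1-\eps)M$, so the cross term is at most $\sqrt{(1-\eps)M}\,B$. Young's Inequality with equal weights absorbs this as $\tfrac{1-\eps}{2}M+\tfrac12 B^2$, and the $B^2$ contributions cancel, leaving $(\tfrac{1}{2\kappa}-\tfrac{1-\eps}{2})M=\tfrac1C M$. Positivity of this factor is exactly the hypothesis $\kappa<(1-\eps)^{-1}$, and it produces $C=2\kappa/(1-\kappa(1-\eps))$; since $C>2$, the retained jump terms dominate $C^{-1}$ times themselves, giving the lower bound $C^{-1}\norm{w_h}_{h,1}^2$.

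For the Lipschitz bound \eqref{eq:adg_lipschitz}, with $w_h\coloneqq u_h-v_h$ I would expand $\Adg(u_h;z_h)-\Adg(v_h;z_h)$ into the $\Fg$-difference term plus $\Cdg(w_h,z_h)$ and estimate each summand by Cauchy--Schwarz. The volume terms, the $\Bdg$-term, and the $a_h$-jump terms are controlled directly by $\norm{w_h}_{h,1}\norm{z_h}_{h,1}$, and the $\Fg$-difference is handled by Lemma~\ref{lem:cordes_bound} as above. The delicate term, and the main obstacle, is $\CFh(w_h,z_h)$, which couples the face jumps $\llb\nabla w_h\cdot n_F\rrb$ and $\llb w_h\rrb$ with the face traces of $\p_t z_h$ and $\nabla\p_t z_h$. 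I would control these traces by their element volume norms using discrete trace and inverse inequalities, which generate factors $\tilde{p}_F\tilde{h}_F^{-1/2}$ (trace) and $\tilde{p}_F^2\tilde{h}_F^{-1}$ (inverse for the extra derivative). The crucial point is that the strong penalty orders $\mu_F\sim\tilde{p}_F^2/\tilde{h}_F$ and $\eta_F\sim\tilde{p}_F^6/\tilde{h}_F^3$ in \eqref{eq:mu_eta} match these scalings exactly, so that $\mu_F^{1/2}\norm{\llb\nabla w_h\cdot n_F\rrb}_{L^2(F)}$ and $\eta_F^{1/2}\norm{\llb w_h\rrb}_{L^2(F)}$, both controlled by $\absJ{w_h}$, pair with $\omega\norm{\p_t z_h}_{L^2(K)}$ over the elements adjacent to $F$ (controlled by $\norm{z_h}_{h,1}$) with parameter-independent constants. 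A concluding Cauchy--Schwarz in time over $I_n$ then yields \eqref{eq:adg_lipschitz}.

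Finally, existence and uniqueness follow from the time-stepping structure \eqref{eq:numerical_scheme_time-stepping}. I would argue by induction on $n$: assuming $u_h(t_{n-1})$ has been determined, the restriction of the scheme to test functions supported on $\overline{I_n}$ defines a nonlinear map on the finite-dimensional space $\calQ_{q_n}(\Vh)$ which, by \eqref{eq:adg_monotonicity} and \eqref{eq:adg_lipschitz}, is continuous, coercive and strongly monotone. The Browder--Minty Theorem then gives a unique $\eval{u_h}{I_n}$, and concatenating the interval solutions produces the unique $u_h\in\Vth$ solving \eqref{eq:numerical_scheme}.
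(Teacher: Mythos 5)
Your proposal is correct and follows essentially the same route as the paper's proof: the same addition--subtraction decomposition combined with Lemma~\ref{lem:cdg_coercivity}, Lemma~\ref{lem:cordes_bound} and Young's Inequality for the monotonicity bound \eqref{eq:adg_monotonicity}, the same trace/inverse-inequality treatment of $\CFh$ matched against the penalty scalings \eqref{eq:mu_eta} for \eqref{eq:adg_lipschitz}, and the same interval-wise Browder--Minty induction for well-posedness. The constants and exponents you track (in particular $\tfrac{1}{2\kappa}-\tfrac{1-\eps}{2}=C^{-1}$ and the $\tilde p_F^3/\tilde h_F^{3/2}$ scaling absorbed by $\eta_F^{1/2}$) agree with those in the paper.
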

\begin{proof}
We begin by showing strong monotonicity of the nonlinear form $\Adg$. Let $z_h$, $v_h\in\Vth$ and set $w_h\coloneqq z_h-v_h$.
Then, by \eqref{eq:adg_definition} and Lemma~\ref{lem:cdg_coercivity}, we have
\begin{multline*}
\Adg(z_h;w_h)-\Adg(v_h;w_h) =  \Cdg(w_h,w_h) \\+\SIn \SK \pair{\Fg[z_h]-\Fg[v_h]-\Lw w_h}{\Lw w_h}_K\, \d t.
\end{multline*}
Lemma~\ref{lem:cordes_bound} and Young's Inequality show that
\begin{multline*}
\SIn \SK \abs{\pair{\Fg[z_h]-\Fg[v_h]-\Lw w_h}{\Lw w_h}_K} \d t \leq \frac{1}{2} \SIn \SK  \norm{\Lw w_h}_{L^2(K)}^2  \d t
\\ 
+ \frac{1-\eps}{2} \SIn \SK \omega^2 \norm{\p_t w_h}_{L^2(K)}^2+\abs{w_h}_{H^2(K),\lambda}^2  \d t.
\end{multline*}
Since $1<\kappa<(1-\eps)^{-1}$, Lemma~\ref{lem:cdg_coercivity} implies that
\begin{multline}\label{eq:adg_monotonicity_1}
\Adg(z_h;w_h)-\Adg(v_h;w_h) 
\geq \frac{1}{C} \SIn \SK \omega^2 \norm{\p_t w_h}_{L^2(K)}^2+\abs{w_h}_{H^2(K)}^2  \d t
\\ + \frac{1}{2}\SIn  \absJ{w_h}^2 \, \d t
+ \frac{\omega}{2} \sum_{n=0}^{N} \normah{\lld w_h \rrd}^2,
\end{multline}
where $C= 2\kappa /(1-\kappa\left(1-\eps\right))\geq 2$, thus showing \eqref{eq:adg_monotonicity}.

To show \eqref{eq:adg_lipschitz}, consider $u_h$, $v_h$ and $z_h \in \Vth$ that all have support in $\overline{I_n}$, and set $w_h\coloneqq u_h-v_h$. It then follows from $\supp v_h\subset\overline{I_n}$ that
\begin{multline*}
\norm{v_h}_{h,1}^2 = \int_{I_n} \SK\left[ \omega^2 \norm{\p_t v_h}_{L^2(K)}^2 + \abs{v_h}_{H^2(K),\lambda}^2\right] + \absJ{v_h}^2\,\d t \\+ \omega\normah{v_h(t_n)}^2+\omega\normah{v_h(t_{n-1}^+)}^2,
\end{multline*}
and similarly for $u_h$ and $z_h$.
We also have
\begin{multline*}
\Adg(u_h;z_h)-\Adg(v_h;z_h) = \int_{I_n} \SK \pair{\Fg[u_h]-\Fg[v_h]}{\Lw z_h}_K \, \d t
  + \CFh(w_h,z_h) \\
+ \int_{I_n} \Bdg(w_h,z_h)-\SK\pair{\Ll w_h}{\Ll z_h}_K\,\d t 
+ \omega \,a_h(w_h(t_{n-1}^+),z_h(t_{n-1}^+)).
\end{multline*}
Lipschitz continuity of $\Fg$ implies that
\[
 \int_{I_n} \SK \abs{ \pair{\Fg[u_h]-\Fg[v_h]}{\Lw z_h}_K} \, \d t\lesssim \norm{w_h}_{h,1}\norm{z_h}_{h,1}.
\]
Furthermore, we have $\abs{\CFh(w_h,z_h)} \leq E_1+E_2 $, where 
\begin{align*}
 E_1 & \coloneqq \omega \int_{I_n} \sum_{F\in\calF_h^i} \abs{\pair{\llb \nabla w_h \cdot n_F\rrb}{\lla\p_t z_h\rra}_F } \, \d t,
\\ E_2 & \coloneqq \omega \int_{I_n} \sum_{F\in\calFhib}\mu_F \abs{\pair{ \llb w_h\rrb}{\llb \p_t z_h\rrb}_F}+\abs{\pair{\llb w_h \rrb}{ \lla \nabla \p_t z_h \cdot n_F \rra}_F}\,\d t.
\end{align*}
The shape-regularity of the meshes $\{\calT\}_h$, the mesh assumption \eqref{eq:card_F_bound} and the trace and inverse inequalities show that
\begin{equation*}
\begin{split}
E_1& \lesssim \left(\int_{I_n} \SK \omega^2\,\norm{\p_t z_h}_{L^2(K)}^2  \d t\right)^{1/2} \left(\int_{I_n} \sum_{F\in\calF_h^i} \frac{\tilde{p}^2_F}{\tilde{h}_F} \norm{\llb \nabla w_h\cdot n_F \rrb}_{L^2(F)}^2  \d t\right)^{1/2},
\\ E_2 & \lesssim \left(\int_{I_n}  \SK \omega^2\,\norm{\p_t z_h}_{L^2(K)}^2  \d t \right)^{1/2}
\left(\int_{I_n} \sum_{F\in\calFhib} \frac{\tilde{p}^6_F}{\tilde{h}^3_F} \norm{\llb w_h\rrb}_{L^2(F)}^2 \d t\right)^{1/2}.
\end{split}
\end{equation*}
Since $\mu_F$ and $\eta_F$ satisfy \eqref{eq:mu_eta}, we conclude that $\abs{\CFh(w_h,z_h)}\lesssim \norm{w_h}_{h,1}\,\norm{z_h}_{h,1}$.
By applying trace and inverse inequalities on the flux terms of the bilinear form $B_{h,*}$, it is found that
\[
\abs{B_{h,*}(w_h,z_h)}\lesssim \left( \SK \abs{w_h}_{H^2(K),\lambda}^2+\absJ{w_h}^2\right)^{1/2} \left( \SK \abs{z_h}_{H^2(K),\lambda}^2+\absJ{z_h}^2\right)^{1/2}.
\]
Therefore, $\int_{I_n}\abs{ \Bdg(w_h,z_h)}+\SK\abs{\pair{\Ll w_h}{\Ll z_h}_K}\,\d t \lesssim \norm{u_h-v_h}_{h,1}\,\norm{z_h}_{h,1}$, thus completing the proof of \eqref{eq:adg_lipschitz}.

Since the numerical scheme \eqref{eq:numerical_scheme} is equivalent to solving \eqref{eq:numerical_scheme_time-stepping} for each $I_n\in\It$, and since $\Adg$ is strongly monotone and Lipschitz continuous on the subspace of $\Vth$ of functions with support in $\overline{I_n}$, for each $I_n\in\It$, repeated applications of the Browder--Minty Theorem show that there exists a unique $u_h\in\Vth$ that solves \eqref{eq:numerical_scheme}.
\qed
\end{proof}

\section{Error analysis}\label{sec:error_analysis}
The techniques of error analysis in the literature on discontinuous Galerkin time discretizations of parabolic equations often require sufficient temporal regularity of the exact solution \cite{Akrivis2004,Schotzau2000}, which, in the present setting, would correspond to the case where $u$ is at least in $H^1(I_n;H)$ for each $I_n\in\It$.
In the first part of this section, we present error bounds for regular solutions, where it is found that the method has convergence orders that are optimal with respect to $h$, $\tau$ and $\bo q$, and that are possibly suboptimal with respect to $\bo p$ by an order and a half. In a second part, we use Cl\'{e}ment quasi-interpolants in Bochner spaces to extend the analysis under weaker regularity assumptions, in order to cover the case where $u\notin H^1(I_n;H)$.

Our reasons for presenting the error analysis in two parts are twofold. First, the error analysis for regular solutions is simpler and permits the use of known approximation theory from \cite{Schotzau2000}, whereas the case of rough solutions requires the additional construction of a Cl\'{e}ment quasi-interpolation operator. Second, the Cl\'{e}ment operator is generally suboptimal by one order in $\tau $ when applied to solutions with higher temporal regularity. Thus, the results given here for regular and rough solutions are complementary to each other.

We will present error bounds in the norm $\norme{\cdot}$ defined by
\begin{equation}
\norme{v}^2 \coloneqq \SIn\SK \left[\omega^2\norm{\p_t v}_{L^2(K)}^2 + \abs{v}_{H^2(K),\lambda}^2\right] + \absJ{v}^2\,\d t + \omega \sum_{n=0}^{N-1}\normah{\lld v \rrd}^2.
\end{equation}
We remark that for $v_h\in\Vth$, we have $\norm{v_h}_{h,1}^2=\norme{v_h}^2 + \omega \normah{\lld v_h \rrd[N]}^2$. Error bounds in the norm $\norm{\cdot}_{h,1}$ can be shown under additional regularity assumptions for the solution at time $T$.
To simplify the notation in this section, let
\begin{equation}
\begin{aligned}
X_0&\coloneqq L^2(\Om),
& X_1&\coloneqq H^1_0(\Om),
& X_2 &\coloneqq H=H^2(\Om)\cap H^1_0(\Om).
\end{aligned}
\end{equation}
Similarly to the definition of the broken Sobolev spaces $H^s(\Om;\calT_h)$, for a Hilbert space $X$, we define the broken Bochner space $H^\sigma(I;X;\It)$ to be the space of functions $u\in L^2(I;X)$ with restrictions $\eval{u}{I_n}\in H^\sigma(I_n;X)$ for each $I_n\in\It$. We equip $H^\sigma(I;X;\It)$ with the obvious norm.

Since the error bounds presented below are given in a very general and flexible form, it can be helpful to momentarily consider their implications for the case of smooth solutions approximated on quasi-uniform meshes and time-partitions with uniform polynomial degrees. In this setting, it can be seen that Theorem~\ref{thm:error_bound_regular} below implies that
\begin{multline}\label{eq:error_bound_smooth}
\norme{u-u_h} \lesssim h^{p-1}\sum_{\ell=0}^1 \norm{u}_{H^\ell(I;H^{p+1-2\ell}(\Om;\calT_h))}\\  + h^{p}\norm{u_0}_{H^{p+1}(\Om;\calT_h)}
   + \tau^q \sum_{\ell\in\{0,2\}} \norm{u}_{H^{q+1-\ell/2}(I;X_\ell;\It)}. 
  \end{multline}
The bound \eqref{eq:error_bound_smooth} suggests combinations of the mesh sizes and polynomial degrees that are optimal in terms of balancing the approximation orders. For example, if $p=2q+1$, then the error bound is of order $(h^2+\tau)^q=(h+\sqrt{t})^{p-1}$, so an optimal method is found by choosing $\tau\simeq h^2$. Alternatively, choosing $p=q+1$ and $\tau\simeq h$ leads to an optimal method of order $h^{p-1}\simeq\tau^q$.

\subsection{Regular solutions}\label{sec:error_analysis_regular}
If the solution $u$ of \eqref{eq:HJB} belongs to $H^1(I;H,\It)$, then the error analysis may be based on the following approximation result, found for instance in \cite{Schotzau2000}, albeit presented here in a form amenable to our purposes.
\begin{theorem}\label{thm:schotzau_schwab_projector}
Let $\Om\subset\R^\dim$ be a bounded convex domain, and let $\{\It\}_{\tau}$ be a sequence of regular partitions of $I=(0,T)$. For each $\tau$, let $\bo q=(q_1,\dots,q_N)$ be a vector of positive integers. Then, for each $\tau$, there exists a linear operator $\pq \colon \HIO\cap H^1(I;H;\It)\tends \Vt$ such that the following holds. The operator $\pq $ is an interpolant at the interval endpoints, i.e.\ for any $u\in \HIO\cap H^1(I;H;\It)$, we have $\pq u(t_n)=\pq u(t_n^+)=u(t_n)$ for each $0\leq n \leq N$. For any $I_n\in\It$, any $\ell\in\{0,1,2\}$, any real number $\sigma_{n,\ell}\geq 1$ and any $j\in\{0,1\}$, we have
\begin{equation}\label{eq:schotzau_schwab_approximation}
\norm{u-\pq u}_{H^{j}(I_n;\Xell)}\lesssim \frac{\tau_{n}^{\varrho_{n,\ell}-j}}{q_n^{\sigma_{n,\ell}-j}}\norm{u}_{H^{\sigma_{n,\ell}}(I_n;\Xell)}
\qquad \forall \,u\in H^{\sigma_{n,\ell}}(I_n;\Xell),
\end{equation}
where $\varrho_{n,\ell}\coloneqq \min(\sigma_{n,\ell},q_n+1)$, and where the constant depends only on $\sigma_{n,\ell}$ and $\max\tau$.
\end{theorem}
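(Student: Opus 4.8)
The plan is to construct $\pq$ interval by interval, exploiting the fact that membership in $\Vt$ imposes no continuity constraint across the nodes $t_n$, so that the restriction of $\pq u$ to each $I_n$ may be defined independently. Moreover, because the projection acts only in the temporal variable—it has the tensor-product form $\hat\pi\otimes\mathrm{Id}$ with $\hat\pi$ a scalar temporal operator—the single operator $\pq$ serves all three spaces $\Xell$, $\ell\in\{0,1,2\}$, simultaneously. It therefore suffices to construct, on a fixed reference interval and for an abstract Hilbert space $X$, one $X$-valued temporal projection and to establish its approximation properties with constants independent of $X$; the estimates for each $\Xell$ then follow by specialising to $X=\Xell$ and regarding $u$ as an $\Xell$-valued function.

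On the reference interval $\hat I=(-1,1)$, given $u\in H^1(\hat I;X)$, I would define $\hat\pi u$ to be the unique $X$-valued polynomial of degree at most $q$ determined by
\begin{equation*}
(\hat\pi u)' = \pi_{q-1}\,u', \qquad \hat\pi u(-1)=u(-1),
\end{equation*}
where $\pi_{q-1}$ denotes the $L^2(\hat I;X)$-orthogonal projection onto $X$-valued polynomials of degree at most $q-1$. Since the constants lie in the range of $\pi_{q-1}$, we have $\int_{\hat I}(\pi_{q-1}u'-u')\,\d s=0$, and integrating gives $\hat\pi u(1)-\hat\pi u(-1)=u(1)-u(-1)$, so that $\hat\pi u(1)=u(1)$ is forced; thus $\hat\pi u$ interpolates $u$ at both endpoints. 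Transplanting to $I_n$ by the affine map and assembling the pieces yields $\pq\colon\HIO\cap H^1(I;H;\It)\tends\Vt$ interpolating at every node. Because $u\in C(\overline I;H^1_0(\Om))$ by Lemma~\ref{lem:gelfand_triple}, the left and right interpolated values coincide, giving $\pq u(t_n)=\pq u(t_n^+)=u(t_n)$ for each $0\leq n\leq N$.

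For the approximation bound I would reduce, after scaling, to reference-interval estimates for $\hat\pi$. Expanding $u'=\sum_{k\geq0}c_k L_k$ in Legendre polynomials $L_k$ with $X$-valued coefficients $c_k$, one has $(u-\hat\pi u)'=\sum_{k\geq q}c_k L_k$, so the $H^j$-error, $j\in\{0,1\}$, is controlled by the tail of this expansion (the case $j=0$ being recovered by integration, using the endpoint interpolation to fix the additive constant). The analytic input is the spectral decay estimate: for $u\in H^\sigma(\hat I;X)$ with $\sigma\geq1$, the truncation error in the $H^j$-norm is bounded by $q^{-(\sigma-j)}\norm{u}_{H^\sigma(\hat I;X)}$, which follows from the decay rates of Legendre coefficients of Sobolev functions. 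Scaling back to $I_n$ by the affine change of variables converts these into the stated estimate, the mismatch between the $H^{\sigma_{n,\ell}}$- and $H^j$-orders generating the factor $\tau_n^{\varrho_{n,\ell}-j}$ with $\varrho_{n,\ell}=\min(\sigma_{n,\ell},q_n+1)$ once the truncation at degree $q_n$ is taken into account.

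The main obstacle is the sharp reference-interval spectral estimate, with explicit and simultaneous control of the $\tau_n$- and $q_n$-powers, holding uniformly over the admissible range $\sigma_{n,\ell}\geq1$ and for \emph{non-integer} $\sigma_{n,\ell}$: this requires Legendre/Jacobi coefficient asymptotics together with a Hilbert-space interpolation argument between consecutive integer orders, and is precisely the content of the $hp$-approximation theory established in \cite{Schotzau2000}. The remaining steps—the affine scaling on each $I_n$ and the assembly across intervals—are routine once this estimate is in hand.
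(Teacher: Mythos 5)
Your construction---the degree-$q_n$ polynomial whose derivative is the truncated Legendre expansion of $\partial_t u$, anchored by the nodal value so that both endpoints are interpolated---is exactly the Sch\"otzau--Schwab projector the paper invokes, and the paper itself offers no proof beyond citing \cite{Schotzau2000} and describing this same construction, so your outline is correct and follows the same route. The one imprecision is that the $j=0$ bound is not literally ``recovered by integration'' of the $j=1$ bound (that would lose a factor of $\tau_n q_n^{-1}$); the sharp $L^2$-in-time estimate needs the Legendre antiderivative identity or a duality argument, but this is part of the reference-interval spectral estimate that you correctly identify as the content of \cite{Schotzau2000}.
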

The construction of $\pq$ in the proof of Theorem~\ref{thm:schotzau_schwab_projector} involves the truncated Legendre series of $\p_t u$ and the values of $u$ at the partition points. Therefore, the requirement of $H^1(I;H;\It)$ regularity is used to ensure that $\eval{\pq }{I_n}$ maps into $\calQ_{q_n}(H)$. A different approximation operator is used in section~\ref{sec:error_analysis_limited} to perform an analysis under weaker regularity assumptions.
\begin{theorem}\label{thm:error_bound_regular}
Let $\Om\subset \R^\dim$ be a bounded convex polytopal domain and let $\{\calT_h\}_h$ be a shape-regular sequence of simplicial or parallelepipedal meshes satisfying \eqref{eq:card_F_bound}, \eqref{eq:c_h_bound}, and let $\bo p=\left(p_K;\;K\in\calT_h\right)$ be a vector of positive integers such that \eqref{eq:c_p_bound} holds for each $h$, and such that $p_K\geq 2$ for all $K\in\calT_h$. Let $I=(0,T)$ and let $\{\It\}_{\tau}$ be a sequence of regular partitions of $I$, and, for each $\tau$, let $\bo q=(q_1,\dots,q_N)$ be a vector of positive integers.
Let $\Ld$ be a compact metric space and let the data $a$, $b$, $c$ and $f$ be continuous on $\Ob\times \overline{I}\times \Ld$ and satisfy \eqref{eq:uniform_ellipticity} and \eqref{eq:cordes_condition3}, or alternatively \eqref{eq:cordes_condition2} in the case where $b\equiv 0$ and $c\equiv 0$. Let $\mu_F$ and $\eta_F$ satisfy \eqref{eq:mu_eta}, with $\cstab$ chosen so that Lemmas~\ref{lem:space_dg_coercivity}~and~\ref{lem:cdg_coercivity} hold with $\kappa<(1-\eps)^{-1}$. 

Let $u\in \HIO$ be the unique solution of the HJB equation \eqref{eq:HJB}, and assume that $u \in L^2(I;H^{\bo s}(\Om;\calT_h))$ and $\p_t u \in L^2(I;H^{\overline{\bo s}}(\Om,\calT_h))$ for each $h$, with $s_K>5/2$ and $\overline{s}_K> 0$ for each $K\in\calT_h$.
Suppose also that, for each $\tau$, each $\ell\in\{0,2\}$ and each $I_n\in\It$, the function $\eval{u}{I_n}\in H^{\sigma_{n,\ell}}(I_n;\Xell)$ for some $\sigma_{n,\ell}\geq 1$. Assume that $u_0 \in H^1_0(\Om)\cap H^{\tilde{\bo s}}(\Om;\calT_h)$ with $\tilde{s}_K > 3/2$ for each $K\in\calT_h$.
Then, we have
\begin{multline}\label{eq:error_bound_regular}
\norme{u-u_h}^2  \lesssim  \SIn\SK \frac{h_K^{2t_K -4}}{p_K^{2 s_K-7}}\norm{u}_{H^{s_K}(K)}^2+\frac{h_K^{2\overline{t}_K}}{p_K^{2{\overline{s}_K}}}\norm{\p_t u}_{H^{\overline{s}_K}(K)}^2\d t
\\ +\max_{K\in\calT_h}p_K^3 \sum_{n=1}^N \sum_{\ell\in\{0,2\}}\frac{\tau_n^{2\varrho_{n,\ell}-2+\ell}}{q_n^{2\sigma_{n,\ell}-2+\ell}}\norm{u}_{H^{\sigma_{n,\ell}}(I_n;\Xell)}^2
+\SK \frac{h_K^{2\tilde{t}_K-2}}{p_K^{2\tilde{s}_K-3}}\norm{u_0}_{H^{\tilde{s}_K}(K)}^2,
\end{multline}
with a constant independent of $u$, $h$, $\bo p$, $\tau$ and $\bo q$, and where $t_K\coloneqq\min(s_K,p_K+1)$, $\overline{t}_K\coloneqq\min(\overline{s}_K,p_K+1)$ and $\tilde{t}_K\coloneqq \min(\tilde{s}_K,p_K+1)$ for each $K\in\calT_h$, and where $\varrho_{n,\ell}\coloneqq\min(\sigma_{n,\ell},q_n+1)$ for each $1\leq n\leq N$ and each $\ell\in\{0,2\}$.
\end{theorem}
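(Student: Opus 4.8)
The plan is to compare $u_h$ with a projection of $u$ into $\Vth$ and to exploit the strong monotonicity and consistency established above. I would first fix a tensor-product projection $u\mapsto \php\pq u\in\Vth$, obtained by composing the temporal interpolant $\pq$ of Theorem~\ref{thm:schotzau_schwab_projector} (well defined since $\eval{u}{I_n}\in H^{\sigma_{n,2}}(I_n;X_2)$ with $\sigma_{n,2}\geq 1$) with a standard $hp$ spatial projection $\php$ into $\Vh$ applied pointwise in time. Writing the error as $u-u_h=\eta+\xi$ with $\eta\coloneqq u-\php\pq u$ and $\xi\coloneqq \php\pq u-u_h\in\Vth$, I would invoke the regularity hypothesis $u\in L^2(I;H^{\bo s}(\Om;\calT_h))$, $s_K>5/2$, so that Corollary~\ref{cor:scheme_consistency} gives the Galerkin-orthogonality identity $\Adg(u;\xi)=\omega\,a_h(u_0,\xi(0^+))=\Adg(u_h;\xi)$. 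The strong-monotonicity bound \eqref{eq:adg_monotonicity}, applied with $z_h=\php\pq u$ and $v_h=u_h$, then yields
\[
\norm{\xi}_{h,1}^2 \lesssim \Adg(\php\pq u;\xi)-\Adg(u_h;\xi)=\Adg(\php\pq u;\xi)-\Adg(u;\xi).
\]
Combined with the triangle inequality $\norme{u-u_h}\leq \norme{\eta}+\norm{\xi}_{h,1}$, everything reduces to bounding this consistency error and to estimating $\eta$.

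Expanding $\Adg$ via \eqref{eq:adg_definition}, the two broken $\Lw$-volume contributions recombine, so that
\[
\Adg(\php\pq u;\xi)-\Adg(u;\xi)=\SIn\SK\pair{\Fg[\php\pq u]-\Fg[u]-\Lw(\php\pq u - u)}{\Lw\xi}_K\,\d t-\Cdg(\eta,\xi).
\]
The first term is controlled directly by Lemma~\ref{lem:cordes_bound}, applied element-wise to $\php\pq u$ and $u$: its integrand is bounded by $\sqrt{1-\eps}\,(\omega^2\abs{\p_t\eta}^2+\abs{D^2\eta}^2+2\lambda\abs{\nabla\eta}^2+\lambda^2\abs{\eta}^2)^{1/2}$, so the Cauchy--Schwarz inequality together with $(\SIn\SK\norm{\Lw\xi}_{L^2(K)}^2\,\d t)^{1/2}\lesssim\norm{\xi}_{h,1}$ gives a bound of the form $\norme{\eta}\,\norm{\xi}_{h,1}$. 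This mirrors the addition--subtraction step in the proof of Theorem~\ref{thm:hjb_well_posed}, and the factor $\sqrt{1-\eps}<1$ is exactly what the constant in \eqref{eq:adg_monotonicity} accommodates.

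The heart of the argument is the bilinear term $\Cdg(\eta,\xi)$, which I would treat by exploiting the regularity of $u$ through Lemmas~\ref{lem:broken_identity} and~\ref{lem:cdg_consistency}. Since $u(t)\in H^2(\Om)\cap H^1_0(\Om)$ with per-element $H^s$-regularity, its spatial jumps vanish and $\Blambda(u,\cdot)=\SK\pair{\Ll u}{\Ll\cdot}_K$, $\Jo(u,\cdot)=0$ and $\CFh(u,\cdot)=0$; moreover $u$ is continuous in time, and because $\pq$ interpolates the interval endpoints the interior temporal jumps $\lld\php\pq u\rrd$ vanish for $1\leq n<N$. Consequently all of $u$'s contributions to the volume and flux consistency terms cancel, only the $n=0$ temporal-jump term survives, and it produces precisely the initial-datum error $\normah{u_0-\php u_0}$, i.e.\ the $\norm{u_0}_{H^{\tilde s_K}(K)}$ terms in \eqref{eq:error_bound_regular}. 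The surviving volume terms are estimated by Cauchy--Schwarz against $\norme{\eta}$, while the flux terms of $\CFh(\eta,\cdot)$ and of $\Blambda(\eta,\cdot)$ are handled by trace and inverse inequalities and the mesh conditions \eqref{eq:card_F_bound}--\eqref{eq:c_p_bound}, exactly as in \cite{Smears2014}, with the penalty weights \eqref{eq:mu_eta} chosen so that the jump contributions of $\eta$ are absorbed into $(\SIn\absJ{\eta}^2\,\d t)^{1/2}$. This yields $\norm{\xi}_{h,1}\lesssim \norme{\eta}+(\SIn\absJ{\eta}^2\,\d t)^{1/2}+\normah{u_0-\php u_0}$.

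It remains to estimate the right-hand side by approximation theory, splitting $\eta=u-\php\pq u$ into its spatial and temporal parts. The spatial error is bounded by standard $hp$-approximation of the regular function $u$; here the pairing of the $H^2$-type volume norm with the strong penalties $\mu_F\simeq\tilde p_F^2/\tilde h_F$ and $\eta_F\simeq\tilde p_F^6/\tilde h_F^3$ produces the order-and-a-half loss in $\bo p$ visible in the exponents $2s_K-7$ and $2\tilde s_K-3$. I expect the main obstacle to be the temporal part $u-\pq u$: although this error is continuous across mesh faces, so that its jumps drop out of $\CFh$, its image under the spatial projection reintroduces face jumps weighted by $\eta_F\simeq \tilde p_F^6/\tilde h_F^3$ in $\absJ{\cdot}$. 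Because $u-\pq u$ is not a spatial polynomial and carries only $H=X_2$ spatial regularity, these jumps can be controlled only by the $hp$ trace-approximation estimate of order $\tilde h_F^3/\tilde p_F^3$ at the $H^2$-level, and the mismatch leaves the residual factor $\tilde p_F^3$; taking the worst element gives the prefactor $\max_{K}p_K^3$ multiplying the temporal estimates \eqref{eq:schotzau_schwab_approximation} in the norms $X_0$ and $X_2$. Assembling the spatial, temporal and initial-datum contributions then reproduces \eqref{eq:error_bound_regular}.
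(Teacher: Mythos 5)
Your proposal follows the paper's proof essentially verbatim: the same projection $\php\pq u$ of the exact solution, the same splitting combined with the strong-monotonicity and consistency reduction, the same elementwise treatment of the nonlinear term, and the same identification of the $\max_K p_K^3$ prefactor (from the penalty $\eta_F\simeq\tilde p_F^6/\tilde h_F^3$ acting on the $H^2$-level trace-approximation error of the temporal defect) and of the order-and-a-half loss in $\bo p$. The only caveat is that your intermediate summary bound $\norm{\xi}_{h,1}\lesssim\norme{\eta}+(\SIn\absJ{\eta}^2\,\d t)^{1/2}+\normah{u_0-\php u_0}$ omits the face-average contributions from $\Bdgr$ and $\CFh$ (traces of $\nabla\eta$ and $D^2\eta$ weighted by $\mu_F^{-1}$ and $\eta_F^{-1}$), which are not controlled by $\norme{\eta}$ alone and must be estimated directly from the $hp$ trace-approximation bounds, as the paper does with its quantities $E_3$ and $E_4$; since those terms converge at the same or better rates, this is a presentational shortcut rather than a gap.
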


Since the norm $\norme{\cdot}$ comprises the broken $H^2$-seminorm in space and a broken $H^1$-norm in time, it is seen that the error bound is optimal with respect to $h$, $\tau$ and $\bo q$, but is suboptimal with respect to $\bo p$ by an order and a half. We remark that since Theorem~\ref{thm:error_bound_regular} assumes $u\in H^1(I_1;H)$, the initial data satisfies $u_0 \in H$, so we may take $\tilde{s}_K\geq 2$ for each $K\in\calT_h$.
\begin{proof}
The approximation theory for $hp$-version discontinuous Galerkin finite element spaces (see Appendix~\ref{sec:approximation}) shows that there exists a sequence of linear projection operators $\{\php\}_h$, with $\php\colon L^2(\Om)\tends \Vh$ and such that for each $K\in\calT_h$, for each nonnegative real number $r_K\leq \max(s_K,\overline{s}_K,\tilde{s}_K)$ and for each nonnegative integer $j\leq r_K$, and if $r_K>1/2$, for each multi-index $\beta$ such that $\abs{\beta}<r_K-1/2$, we have
\begin{align}
\norm{u-\php u}_{H^j(K)} &\lesssim \frac{h_K^{\min(r_K,p_K+1)-j}}{(p_K+1)^{r_K-j}}\norm{u}_{H^{r_K}(K)} & &\forall\, u\in H^{r_K}(K),
\label{eq:pmp_approximation_bound_2}
\\ \norm{D^\beta(u-\php u)}_{L^2(\p K)} &\lesssim \frac{h_K^{\min(r_K,p_K+1)-\abs{\beta}-1/2}}{(p_K+1)^{r_K-\abs{\beta}-1/2}} \norm{u}_{H^{r_K}(K)} & &\forall \,u\in H^{r_K}(K),\label{eq:pmp_trace_bound_2}
\end{align}
where the constant is independent of $r_K$, $h_K$, $p_K$ but possibly dependent on $s_K$, $\overline{s}_K$ and $\tilde{s}_K$. The technical form of this approximation result expresses the optimality and stability of $\php$ for functions in $H^{r_K}(K)$, $0\leq r_K \leq \max(s_K,\overline{s}_K,\tilde{s}_K)$. In particular, we will use the fact that $\php$ is elementwise $L^2$-stable, $H^1$-stable and $H^2$-stable in the analysis below.

For each $h$ and $\tau$, let $z_\tau \coloneqq \pq u \in \Vt$, and let $z_h\coloneqq \php z_\tau \in \Vth$.
Continuity of $z_\tau$ implies continuity of $z_h$, so that $\lld z_h \rrd = 0$ for each $1\leq n < N$. Furthermore, we have $z_\tau(0^+) = u_0$, so $z_h(0^+)=\php u_0$.
Let $\xi_h\coloneqq u-z_h$ and let $\psi_h \coloneqq u_h-z_h$, so that $u-u_h = \xi_h - \psi_h$. Recall that $\norme{\psi_h}\leq \norm{\psi_h}_{h,1}$. Theorem~\ref{thm:dg_stability}, the scheme \eqref{eq:numerical_scheme} and Corollary~\ref{cor:scheme_consistency} show that
\begin{multline}\label{eq:error_bound_regular_0}
\norm{\psi_h}_{h,1}^2 \lesssim A_h(u_h;\psi_h)-A_h(z_h;\psi_h)=A_h(u;\psi_h)-A_h(z_h;\psi_h)
\\  = \SIn \SK \pair{\Fg[u]-\Fg[z_h]}{\Lw \psi_h}_K
+\Bdg(\xi_h,\psi_h)\,\d t
\\ - \SIn\SK \pair{\Ll \xi_h}{\Ll \psi_h}_{K}\,\d t
  + \CFh(\xi_h,\psi_h) + \omega \,a_h(\xi_h(t_0^+),\psi_h(t_0^+)).
\end{multline}
Therefore $\norme{\psi_h}^2\leq \norm{\psi_h}_{h,1}^2\leq \sum_{i=1}^4 D_i$, where the quantities $D_i$, $1\leq i \leq 4$, are defined by
\begin{gather*}
D_1 \coloneqq \SIn\SK\abs{ \pair{\Fg[u]-\Fg[z_h]}{\Lw \psi_h}_K} + \abs{\pair{\Ll \xi_h}{\Ll \psi_h}_{K}}\d t,
\\
D_2 \coloneqq \SIn\abs{\Bdg(\xi_h,\psi_h)} \d t,\quad
 D_3 \coloneqq  \abs{\CFh(\xi_h,\psi_h)}, \quad 
D_4  \coloneqq \omega \abs{a_h(\xi_h(0^+),\psi_h(0^+))}.
\end{gather*}
Lipschitz continuity of $\Fg$ implies that $D_1 \lesssim \sqrt{E_1+E_2}\; \norm{\psi_h}_{h,1}$, where $E_1$ and $E_2$ are defined by
\begin{equation*}\label{eq:E1E2_def}
\begin{aligned}
E_1&\coloneqq \SIn\SK \norm{\p_t \xi_h}_{L^2(K)}^2\d t,
& E_2&\coloneqq \SIn\SK \norm{\xi_h}_{H^2(K)}^2 \d t.
\end{aligned}
\end{equation*}
Since the sequence of meshes $\{\calT_h\}_h$ is shape-regular and since $\eval{\psi_h}{I_n} \in \calQ_{q_n}(\Vh)$ for each $I_n\in\It$, the use of trace and inverse inequalities on the flux terms appearing in $\Bdg(\xi_h,\psi_h)$ yields $D_2 \lesssim \sqrt{\sum\nolimits_{i=2}^6 E_i}\;\norm{\psi_h}_{h,1}$, where the quantities $E_i$, $3\leq i \leq 5$, are defined by
\begin{align*}
E_3&\coloneqq \SIn\sum_{F\in\calF_h^i}\mu_F^{-1}\norm{\DivT\nablaT\lla\xi_h\rra}_{L^2(F)}^2+ \sum_{F\in\calFhib}\mu_F^{-1} \norm{\nablaT\lla \nabla \xi_h\cdot n_F\rra}_{L^2(F)}^2\d t,
\\ E_4&\coloneqq \SIn\sum_{F\in\calFhib}\eta_F^{-1}\norm{\lla\nabla \xi_h\cdot n_F\rra}_{L^2(F)}^2+\sum_{F\in\calF_h^i}\mu_F^{-1}\norm{\lla \xi_h \rra}_{L^2(F)}^2\d t,
\\ E_5&\coloneqq \SIn\sum_{F\in\calF_h^i} \mu_F \norm{\llb \nabla \xi_h\cdot n_F\rrb}_{L^2(F)}^2+\sum_{F\in \calFhib} \mu_F \norm{\llb \nablaT \xi_h\rrb}_{L^2(F)}^2\d t,
\\ E_6 &\coloneqq \SIn\sum_{F\in\calFhib} \eta_F \norm{\llb \xi_h\rrb}_{L^2(F)}^2\d t.
\end{align*}
Note that $\eval{\p_t \psi_h}{I_n}\in \calQ_{q_n-1}(\Vh)$ for each $I_n\in\It$. Thus, similarly to the proof of Theorem~\ref{thm:dg_stability}, the use of trace and inverse inequalities leads to $D_3\lesssim \sqrt{E_4+E_5}\,\norm{\psi_h}_{h,1}$.
It follows from \eqref{eq:a_h_coercivity} that we have $D_4 \lesssim \sqrt{E_6+E_7+E_8}\,\norm{\psi_h}_{h,1}$, where the quantities $E_i$, $7\leq i \leq 9$, are defined by
\begin{align*}
E_7 &\coloneqq \SK \norm{u_0-\php u_0}_{H^1(K)}^2, & 
 E_8 &\coloneqq \sum_{F\in\calFhib} \mu_F\norm{u_0-\php u_0}_{L^2(F)}^2, \\
 E_9 &\coloneqq \sum_{F\in\calFhib} \mu_F^{-1} \norm{\lla \nabla(u_0 - \php u_0)\cdot n_F\rra}_{L^2(F)}^2. & &
\end{align*}
Therefore, \eqref{eq:error_bound_regular_0} implies that $\norme{\psi_h}^2\lesssim \sum_{i=1}^9 E_i$.
The properties of the operator $\php$, namely its linearity, $L^2$-stability and approximation properties \eqref{eq:pmp_approximation_bound_2}, together with \eqref{eq:schotzau_schwab_approximation}, imply that
\begin{equation}
\begin{split}
E_1 & \lesssim \SIn \SK  \norm{\p_t u-\php \p_t u}_{L^2(K)}^2+\norm{\php(\p_t u-\p_t z_\tau)}_{L^2(K)}^2 \d t
\\ &\lesssim \SIn\SK \norm{\p_t u-\php \p_t u}_{L^2(K)}^2\d t
+ \sum_{n=1}^N\norm{u-z_\tau}_{H^1(I_n;X_0)}^2
\\ &\lesssim \SIn \SK \frac{h_K^{2\overline{t}_K}}{p_K^{2 \overline{s}_K}}\norm{\p_t u}_{H^{\overline{s}_K}(K)}^2\d t
+ \sum_{n=1}^N \frac{\tau_n^{2\varrho_{n,0}-2}}{q_n^{2\sigma_{n,0}-2}} \norm{u}_{H^{\sigma_{n,0}}(I_n;X_0)}^2.
\end{split}
\end{equation}
Since the operator $\php$ is elementwise $H^2$-stable, it is found that
\begin{equation}
\begin{split}
E_2& \lesssim \SIn \SK \norm{u-\php u}_{H^2(K)}^2+\norm{\php(u-z_\tau)}_{H^2(K)}^2 \d t
\\ &\lesssim \SIn\SK \norm{u-\php u}_{H^2(K)}^2\d t  + \sum_{n=1}^N\norm{u-z_\tau}_{L^2(I_n;X_2)}^2
\\ &\lesssim\SIn\SK \frac{h_K^{2t_K-4}}{p_K^{2s_K-4}}\norm{u}_{H^{s_K}(K)}^2\d t + \sum_{n=1}^N\frac{\tau_n^{2\varrho_{n,2}}}{q_n^{2\sigma_{n,2}}}\norm{u}_{H^{\sigma_{n,2}}(I_n;X_2)}^2.
\end{split}
\end{equation}
The mesh assumptions \eqref{eq:card_F_bound}, \eqref{eq:c_h_bound} and \eqref{eq:c_p_bound}, the bound \eqref{eq:pmp_trace_bound_2}, and the application of trace and inverse inequalities on $\eval{\php(u-z_\tau)}{I_n}\in \calQ_{q_n}(\Vh)$, imply that
\begin{equation}
\begin{split}
E_3&\lesssim\SIn\SK \frac{h_K}{p_K^2} \norm{ D^2(u-\php z_\tau)}_{L^2(\p K)}^2\,\d t
\\ &\lesssim \SIn \SK \frac{h_K}{p_K^2}\left[ \norm{D^2(u-\php u) + D^2\php(u-z_\tau)}_{L^2(\p K)}^2\right]\d t 
\\ &\lesssim \SIn\SK \frac{h_K^{2t_K-4}}{p_K^{2s_K-3}}\norm{u}_{H^{s_K}(K)}^2+ \SK\norm{u-z_\tau}_{H^2(K)}^2\d t
\\ &\lesssim \SIn\SK \frac{h_K^{2t_K-4}}{p_K^{2s_K-3}}\norm{u}_{H^{s_K}(K)}^2\d t + \sum_{n=1}^N  \frac{\tau_n^{2\varrho_{n,2}}}{q_n^{2\sigma_{n,2}}}\norm{u}_{H^{\sigma_{n,2}}(I_n;X_2)}^2.
\end{split}
\end{equation}
Similarly to $E_3$, we find that
\begin{equation}
E_4 \lesssim  \SIn\SK \frac{h_K^{2t_K}}{p_K^{2s_K+1}}\norm{u}_{H^{s_K}(K)}^2\,\d t + \sum_{n=1}^N \frac{\tau_n^{2\varrho_{n,0}}}{q_n^{2\sigma_{n,0}}}\norm{u}_{H^{\sigma_{n,0}}(I_n;X_0)}^2.
\end{equation}
The spatial regularity of $u$ and $z_\tau$ imply that
\begin{multline*}
E_5=\SIn\sum_{F\in\calF_h^i}
\mu_F \norm{\llb \nabla \left[ u-\php u + \php(u-z_\tau)-(u-z_\tau)\right]\cdot n_F\rrb}_{L^2(F)}^2\d t
\\ +\SIn\sum_{F\in\calFhib} \mu_F \norm{\llb \nablaT \left[ u-\php u + \php(u-z_\tau)-(u-z_\tau)\right] \rrb}_{L^2(F)}^2\d t.
\end{multline*}
Therefore, the mesh assumptions \eqref{eq:card_F_bound}, \eqref{eq:c_h_bound} and \eqref{eq:c_p_bound} and the approximation bound \eqref{eq:pmp_trace_bound_2} yield
\begin{equation}
\begin{split}
E_5 &\lesssim \sum_{n=1}^N\int\limits_{I_n}\SK \frac{p_K^2}{h_K}\norm{\nabla(u-\php u)+\nabla \left[u-z_\tau-\php(u-z_\tau)\right]}_{L^2(\p K)}^2\d t
\\ &\lesssim \SIn\SK\frac{h_K^{2t_K-4}}{p_K^{2s_K-5}}\norm{u}_{H^{s_K}(K)}^2 + \SK p_K\norm{u-z_\tau}_{H^2(K)}^2\d t
\\ &\lesssim \SIn\SK\frac{h_K^{2t_K-4}}{p_K^{2s_K-5}}\norm{u}_{H^{s_K}(K)}^2  \d t
 + \max_{K\in\calT_h}p_K \sum_{n=1}^N\frac{\tau_n^{2\varrho_{n,2}}}{q_n^{2\sigma_{n,2}}}\norm{u}_{H^{\sigma_{n,2}}(I_n;X_2)}^2.
\end{split}
\end{equation}
Likewise, it follows from the spatial regularity of $z_\tau$, the mesh assumptions, and the approximation bound \eqref{eq:pmp_trace_bound_2} that
\begin{equation}
\begin{split}
E_6 &\lesssim \SIn\SK \frac{p_K^6}{h_K^3} \norm{u-\php u + \php(u-z_\tau)-(u-z_\tau)}_{L^2(\p K)}^2\d t
\\ & \lesssim \SIn\SK \frac{h_K^{2t_K-4}}{p_K^{2 s_K-7}}\norm{u}_{H^{s_K}(K)}^2 + \SK p_K^3 \norm{u-z_\tau}_{H^2(K)}^2\d t
\\ & \lesssim \SIn\SK \frac{h_K^{2t_K-4}}{p_K^{2 s_K-7}}\norm{u}_{H^{s_K}(K)}^2 \d t
+ \max_{K\in\calT_h} p_K^3 \sum_{n=1}^N \frac{\tau_n^{2\varrho_{n,2}}}{q_n^{2\sigma_{n,2}}}\norm{u}_{H^{\sigma_{n,2}}(I_n;X_2)}^2.
\end{split}
\end{equation}
Finally, it is readily shown that
\begin{equation}
\sum_{i=7}^9 E_i \lesssim \SK \frac{h_K^{2\tilde{t}_K-2}}{p_K^{2\tilde{s}_K-3}}\norm{u_0}_{H^{\tilde{s}_K}(K)}^2.
\end{equation}
Since $\norme{\xi_h}^2\leq \sum_{i=1}^9 E_i$, the above bounds and the triangle inequality $\norme{u-u_h}\leq \norme{\xi_h}+\norme{\psi_h}$ complete the proof of \eqref{eq:error_bound_regular}.
\qed
\end{proof}
\subsection{Rough solutions}\label{sec:error_analysis_limited}
The proof of Theorem~\ref{thm:error_bound_regular} depends on the approximation result from Theorem~\ref{thm:schotzau_schwab_projector}, which requires that the solution $u$ belongs to $H^1(I;H;\It)$. In this section, we relax this condition by using a Cl\'{e}ment quasi-interpolation result instead of Theorem~\ref{thm:schotzau_schwab_projector}.

For $\It$ a regular partition of $(0,T)$, let $\{\hf_m\}_{m=0}^N$ denote the set of hat functions of $\It$, i.e.\ $\hf_m$ is the unique piecewise-affine function on $\It$ such that $\hf_m(t_n)=\delta_{nm}$ for $0\leq n,m\leq N$. For $0\leq m\leq N$, let $J_m \coloneqq \supp \hf_m$, and note that $J_m=\overline{I_m}\cup\overline{I_{m+1}}$ for $1\leq n < N$, whilst $J_0=\overline{I_1}$ and $J_N=\overline{I_N}$.
\begin{theorem}\label{thm:clement_bochner_approximation}
Let $\Om\subset\R^\dim$ be a bounded convex domain, and let $\{\It\}_{\tau}$ be a sequence of regular partitions of $I=(0,T)$. For each $\tau$, let $\bo q = (q_1,\dots,q_N)$ be a vector of positive integers. Suppose that there exist positive constants $c_\tau$ and $c_{q}$ such that, for each $\tau$, we have\begin{equation}\label{eq:time_partition_regularity}
\begin{aligned}
&\frac{1}{c_\tau} \leq \frac{\tau_{n-1}}{\tau_{n}}\leq c_\tau,
& \frac{1}{c_q} \leq \frac{q_{n-1}}{q_{n}}\leq c_q, & &2\leq n\leq N.
\end{aligned}
\end{equation}
Let $u\in L^2(I;H)$ and suppose that $\eval{u}{J_m} \in H^{\sigma_{m,\ell}}(J_m;\Xell)$ for some $\sigma_{m,\ell}\in\R_{\geq 0}$ for each $\ell\in\{0,1,2\}$ and each $0\leq m\leq N$.
Then, there exists a sequence of functions $\{z_\tau\}_\tau$, such that $z_\tau \in \Vt$ for each $\tau$, and such that the following properties hold.
The functions $z_\tau$ are continuous on $I$, i.e.\ $\lld z_\tau \rrd =0$ for each $1\leq n < N$. For each $\ell\in\{0,1,2\}$ and each $I_n\in\It$, we have
\begin{equation}\label{eq:clement_bochner_l2_stability}
\norm{z_\tau}_{L^2(I_n;\Xell)}\lesssim \sum_{J_m\supset I_n} \norm{u}_{L^2(J_m;\Xell)},
\end{equation}
where the constant is independent of all other quantities.
For each $\ell\in\{0,1,2\}$, each $I_n\in\It$ and each nonnegative integer $j\leq \min_{J_m\supset I_n} \sigma_{m,\ell}$, we have
\begin{equation}\label{eq:clement_bochner_approximation}
\norm{u- z_\tau}_{H^j(I_n;\Xell)}\lesssim 
\sum_{J_m\supset I_n}\frac{\tau_n^{\varrho_{m,\ell}-j}}{q_n^{\sigma_{m,\ell}-j}}\norm{u}_{H^{\sigma_{m,\ell}}(J_m;\Xell)},
\end{equation}
where $\varrho_{m,\ell}\coloneqq \min(\sigma_{m,\ell},\min_{I_n\subset J_m} q_n)$, and the constant depends only on $\max\sigma_{m,\ell}$, $\max \tau$, $c_\tau$ and $c_q$.
\end{theorem}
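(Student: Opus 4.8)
The plan is to build $z_\tau$ by a Cl\'ement (partition-of-unity) construction in time, using the hat functions $\{\hf_m\}_{m=0}^N$ as a partition of unity together with local projections on the patches $J_m$. For each node $0\le m\le N$ set $q_m^\ast\coloneqq\min_{I_n\subset J_m}q_n$, and let $R_m$ be a local $hp$-projection operator from $L^2(J_m;\Xell)$ onto $\calQ_{q_m^\ast-1}(\Xell)$, taken to be $L^2(J_m;\Xell)$-stable and to satisfy, for $0\le i\le\sigma_{m,\ell}$,
\begin{equation*}
\norm{u-R_m u}_{H^i(J_m;\Xell)}\lesssim \frac{\tau_n^{\varrho_{m,\ell}-i}}{q_n^{\sigma_{m,\ell}-i}}\norm{u}_{H^{\sigma_{m,\ell}}(J_m;\Xell)},\qquad\varrho_{m,\ell}=\min(\sigma_{m,\ell},q_m^\ast).
\end{equation*}
Such an operator is furnished by the one-dimensional $hp$-approximation theory (cf.\ \cite{Schotzau2000} and the Appendix); the $\Xell$-valued estimates follow from the scalar ones by tensorisation, since $\calQ_Q(\Xell)=\calQ_Q(\R)\otimes\Xell$ and the relevant norms are Bochner norms, and the case of real $\sigma_{m,\ell}\ge0$ is reached by interpolation between integer orders. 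The displayed estimate already incorporates \eqref{eq:time_partition_regularity} to pass from the patch data $(\abs{J_m},q_m^\ast)$ to the local data $(\tau_n,q_n)$: the regularity of the partition gives $\abs{J_m}\simeq\tau_n$ and $q_m^\ast\simeq q_n$ whenever $I_n\subset J_m$, absorbing $c_\tau,c_q$ into the hidden constant. Note that $q_m^\ast=(q_m^\ast-1)+1$ is exactly the polynomial reproduction order of $R_m$, which is why $\varrho_{m,\ell}$ takes the asserted form. I then define $z_\tau\coloneqq\sum_{m=0}^N\hf_m\,R_m u$.

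Next I would check membership, continuity and stability. On an interval $I_n$ only the affine hats $\hf_{n-1},\hf_n$ are nonzero, and since $I_n\subset J_{n-1}$ and $I_n\subset J_n$ we have $q_{n-1}^\ast-1\le q_n-1$ and $q_n^\ast-1\le q_n-1$; hence each product $\hf_mR_m u$ has temporal degree at most $q_n$, so $z_\tau|_{I_n}\in\calQ_{q_n}(\Xell)$ and $z_\tau\in\Vt$. Continuity is automatic, because at each node $t_n$ exactly one hat equals $1$ while the others vanish, so both one-sided limits of $z_\tau$ at $t_n$ equal $R_nu(t_n)$, giving $\lld z_\tau\rrd=0$ for $1\le n<N$. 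The stability \eqref{eq:clement_bochner_l2_stability} follows from $0\le\hf_m\le1$ and the $L^2$-stability of the $R_m$, since $\norm{z_\tau}_{L^2(I_n;\Xell)}\le\sum_{J_m\supset I_n}\norm{R_m u}_{L^2(J_m;\Xell)}\lesssim\sum_{J_m\supset I_n}\norm{u}_{L^2(J_m;\Xell)}$.

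For the approximation bound \eqref{eq:clement_bochner_approximation} the key is the partition-of-unity identity $\sum_{J_m\supset I_n}\hf_m\equiv1$ on $I_n$, which lets me write $u-z_\tau=\sum_{J_m\supset I_n}\hf_m(u-R_m u)$ on $I_n$. For $j=0$ the bound is immediate from $\abs{\hf_m}\le1$; for $j\ge1$, the Leibniz rule and the fact that $\hf_m$ is affine (so only the derivative orders $i=j$ and $i=j-1$ survive, with $\norm{\hf_m}_{L^\infty}\le1$ and $\norm{\hf_m'}_{L^\infty(I_n)}\lesssim\tau_n^{-1}$) give
\begin{equation*}
\norm{\hf_m(u-R_m u)}_{H^j(I_n;\Xell)}\lesssim\norm{u-R_m u}_{H^j(J_m;\Xell)}+\tau_n^{-1}\norm{u-R_m u}_{H^{j-1}(J_m;\Xell)}.
\end{equation*}
Inserting the local estimate, the first term yields the asserted bound directly, while the second equals $\tau_n^{\varrho_{m,\ell}-j}q_n^{-(\sigma_{m,\ell}-j+1)}\norm{u}_{H^{\sigma_{m,\ell}}(J_m;\Xell)}$, carrying the same $\tau$-power and an extra factor $q_n^{-1}\le1$, hence dominated by the first; summing over $J_m\supset I_n$ gives \eqref{eq:clement_bochner_approximation}.

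The main obstacle is not any single estimate but the degree bookkeeping that makes the construction self-consistent: one must project onto polynomials of degree exactly one below $q_m^\ast$ so that multiplication by the affine hat lands back in $\calQ_{q_n}(\Xell)$, while the reproduction order $q_m^\ast$ still delivers the stated $\tau$-exponent $\varrho_{m,\ell}$. The remaining quantitative input --- the scalar one-dimensional $hp$-estimates for the local projection in fractional-order Sobolev norms and their tensorisation to $\Xell$-valued functions --- is standard, but \eqref{eq:time_partition_regularity} must be used at two precise points, to replace the patch length $\abs{J_m}$ by $\tau_n$ and the patch degree $q_m^\ast$ by $q_n$, with the comparability constants $c_\tau,c_q$ absorbed into the hidden constant.
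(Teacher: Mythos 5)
Your proposal is correct and follows essentially the same route as the paper: local projections onto $\calQ_{\bar q_m-1}$ on each patch $J_m$ (the paper's $v_m$, your $R_m u$), blended with the affine hat functions, with the same degree bookkeeping, the same use of \eqref{eq:time_partition_regularity} to pass from $(\abs{J_m},\bar q_m)$ to $(\tau_n,q_n)$, and the same partition-of-unity plus Leibniz argument for \eqref{eq:clement_bochner_approximation}. The only cosmetic difference is that you justify the $\Xell$-valued estimates by tensorisation, whereas the paper's appendix derives them via an eigenfunction expansion in $L^2(\Om)$; both are standard and equivalent here.
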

\begin{proof}
For $0\leq m\leq N$, define $\bar{q}_m\coloneqq \min_{I_n\subset J_m} q_n$, and note $\bar{q}_m\geq 1$ for all $m$ since $q_n\geq 1$ for all $n$.
Since $u\in L^2(J_m;X_2)$ for each $m$, standard approximation theory for Bochner spaces (see Appendix~\ref{sec:approximation}) implies that there exist functions $v_m\in \calQ_{\bar{q}_m-1}(H)$, $0\leq m\leq N$, with the following properties.
For each $\ell\in\{0,1,2\}$, we have $\norm{v_m}_{L^2(J_m;\Xell)}\lesssim \norm{u}_{L^2(J_m;\Xell)}$, with a constant independent of all other quantities.
For each $\ell\in\{0,1,2\}$ and each nonnegative integer $j\leq \sigma_{m,\ell}$, we have
\begin{equation}\label{eq:clement_bochner_2}
\norm{u-v_m}_{H^{j}(J_m;\Xell)}\lesssim \frac{\abs{J_m}^{\varrho_{m,\ell}-j}}{\bar{q}_m^{\sigma_{m,\ell}-j}}\norm{u}_{H^{\sigma_{m,\ell}}(J_m;\Xell)},
\end{equation}
where $\varrho_{m,\ell}\coloneqq \min(\sigma_{m,\ell},\bar{q}_m)$, where $\abs{J_m}$ is the length of the interval $J_m$, and where the constant depends only on $\max\sigma_{m,\ell}$ and $\max\tau$.

The hypothesis~\eqref{eq:time_partition_regularity} and the bound~\eqref{eq:clement_bochner_2} imply that, for each $I_n\subset J_m$, each $\ell\in\{0,1,2\}$ and each nonnegative integer $j\leq\sigma_{m,\ell}$,
\begin{equation}\label{eq:clement_bochner_3}
\norm{u-v_m}_{H^j(I_n;\Xell)}\lesssim \frac{\tau_n^{\varrho_{m,\ell}-j}}{q_n^{\sigma_{m,\ell}-j}}\norm{u}_{H^{\sigma_{m,\ell}}(J_m;\Xell)},\end{equation}
where the constant depends only on $\max\sigma_{m,\ell}$, $\max\tau$, $c_\tau$ and $c_q$.

Define $z_\tau \coloneqq \sum_{m=0}^N \hf_m v_m$, where $\hf_m$ is the hat function over the interval $J_m$. Note that we have $\eval{v_m}{I_n}\in \calQ_{q_n-1}(H)$ for each $I_n\in\It$ since $\bar{q}_m\leq q_n$ for each $I_n\subset J_m$. Since $\hf_m$ is piecewise affine, it follows that $\eval{z_\tau}{I_n}\in \calQ_{q_n}(H)$ for each $I_n \in \It$, thereby showing that $z_\tau \in \Vt$. Furthermore, it is clear that $z_\tau$ is continuous on $I$, i.e.\ $\lld z_\tau \rrd =0$ for each $1\leq n \leq N-1$.
The bound \eqref{eq:clement_bochner_l2_stability} follows from $\norm{v_m}_{L^2(J_m;\Xell)}\lesssim \norm{u}_{L^2(J_m;\Xell)}$ and from the fact that $\norm{\hf_m}_{L^\infty(I)}=1$ for each $0\leq m \leq N$.
Since $\{\hf_m\}_{m=0}^N$ forms a partition of unity, the bound \eqref{eq:clement_bochner_3} implies that, for each $I_n\in \It$ and each $\ell\in\{0,1,2\}$, 
\begin{multline*}
\norm{u-z_\tau}_{L^2(I_n;\Xell)}\leq\sum_{J_m\supset I_n}\norm{\hf_m (u-v_m)}_{L^2(I_n;\Xell)} 
\\ \lesssim \sum_{J_m\supset I_n} \norm{u-v_m}_{L^2(I_n;\Xell)}
\lesssim \sum_{J_m\supset I_n} \frac{\tau_n^{\varrho_{m,\ell}}}{q_n^{\sigma_{m,\ell}}} \norm{u}_{H^{\sigma_{m,\ell}}(J_m;\Xell)},
\end{multline*}
and, for each integer $1\leq j\leq \min_{J_m\supset I_n}\sigma_{m,\ell}$,
\begin{multline*}
\abs{u-z_\tau}_{H^j(I_n;\Xell)}  \leq \sum_{J_m\supset I_n}\abs{\hf_m (u-v_m)}_{H^{j}(I_n;\Xell)}
\\  \lesssim \sum_{J_m\supset I_n} \abs{u-v_m}_{H^j(I_n;\Xell)}+\frac{1}{\tau_n}\abs{u-v_m}_{H^{j-1}(I_n;\Xell)}
 \lesssim \sum_{J_m\supset I_n}\frac{\tau_n^{\varrho_{m,\ell}-j}}{q_n^{\sigma_{m,\ell}-j}}\norm{u}_{H^{\sigma_{m,\ell}}(J_m;\Xell)}.\end{multline*}
This completes the proof of \eqref{eq:clement_bochner_approximation}.\qed
\end{proof}

\begin{theorem}\label{thm:error_bound_limited}
Let $\Om\subset\R^\dim$ be a bounded convex polytopal domain and let $\{\calT_h\}_h$ be a shape-regular sequence of simplicial or parallelepipedal meshes satisfying \eqref{eq:card_F_bound}, \eqref{eq:c_h_bound}, and let $\bo p=\left(p_K;\;K\in\calT_h\right)$ be a vector of positive integers satisfying \eqref{eq:c_p_bound} for each $h$ and such that $p_K\geq 2$ for each $K\in\calT_h$. Let $I=(0,T)$ and let $\{\It\}_{\tau}$ be a sequence of regular partitions of $I$, and, for each $\tau$, let $\bo q$ be a vector of positive integers such that \eqref{eq:time_partition_regularity} holds.
Let $\Ld$ be a compact metric space and let the data $a$, $b$, $c$ and $f$ be continuous on $\Ob\times \overline{I}\times \Ld$ and satisfy \eqref{eq:uniform_ellipticity} and \eqref{eq:cordes_condition3}, or alternatively \eqref{eq:cordes_condition2} in the case where $b\equiv 0$ and $c\equiv 0$. Let $\mu_F$ and $\eta_F$ satisfy \eqref{eq:mu_eta}, with $\cstab$ chosen so that Lemmas~\ref{lem:space_dg_coercivity}~and~\ref{lem:cdg_coercivity} hold with $\kappa<(1-\eps)^{-1}$. 

Let $u\in \HIO$ be the unique solution of the HJB equation \eqref{eq:HJB}, and assume that $u \in L^2(I;H^{\bo s}(\Om;\calT_h))$ and $\p_t u \in L^2(I;H^{\overline{\bo s}}(\Om,\calT_h))$ for each $h$, with $s_K>5/2$ and $\overline{s}_K>0$ for each $K\in\calT_h$.
Suppose also that, for each $\tau$, $\ell\in\{0,1,2\}$, and each $0\leq m\leq N$, the function $\eval{u}{J_m} \in H^{\sigma_{m,\ell}}(J_m;\Xell)$ for some real $\sigma_{m,\ell}\geq 0$, with $\sigma_{m,0}\geq 1$ for all $m$.  Assume that $u_0 \in H^1_0(\Om)\cap H^{\tilde{\bo s}}(\Om;\calT_h)$ with $\tilde{s}_K > 3/2$ for each $K\in\calT_h$.
Then, we have 
\begin{multline}\label{eq:nonsmooth_error_bound}
\norme{u-u_h}^2\lesssim \SIn\SK \frac{h_K^{2t_K -4}}{p_K^{2 s_K-7}}\norm{u}_{H^{s_K}(K)}^2 + \frac{h_K^{2\overline{t}_K}}{p_K^{2{\overline{s}_K}}}\norm{\p_t u}_{H^{\overline{s}_K}(K)}^2 \d t
\\ + \max_{K\in\calT_h}p_K^3\sum_{n=1}^N \sum_{\ell=0}^2\sum_{J_m\supset I_n}\frac{\tau_n^{2\varrho_{m,\ell}-2+\ell}}{q_n^{2\sigma_{m,\ell}-2+\ell}}\norm{u}_{H^{\sigma_{m,\ell}}(J_m;\Xell)}^2 + \SK \frac{h_K^{2\tilde{t}_K-2}}{p_K^{2\tilde{s}_K-3}}\norm{u_0}_{H^{\tilde{s}_K}(K)}^2,
\end{multline}
with a constant independent of $h$, $\bo p$, $\tau$, $\bo q$, and $u$, and where $t_K\coloneqq\min(s_K,p_K+1)$, $\overline{t}_K\coloneqq\min(\overline{s}_K,p_K+1)$, and $\tilde{t}_K\coloneqq \min(\tilde{s}_K,p_K+1)$ for each $K\in\calT_h$, and where $\varrho_{m,\ell}\coloneqq\min(\sigma_{m,\ell},\min_{I_n\subset J_m} q_n)$ for each $0\leq m\leq N$ and each $\ell\in\{0,1,2\}$.
\end{theorem}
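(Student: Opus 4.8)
The plan is to follow the architecture of the proof of Theorem~\ref{thm:error_bound_regular} almost verbatim, changing only the temporal approximation operator. First I would fix $h$ and $\tau$ and set $z_\tau\in\Vt$ to be the Cl\'ement quasi-interpolant of Theorem~\ref{thm:clement_bochner_approximation}, whose hypotheses are exactly the regularity assumptions imposed here (including the time-partition regularity \eqref{eq:time_partition_regularity}). I would then put $z_h\coloneqq\php z_\tau\in\Vth$, $\xi_h\coloneqq u-z_h$ and $\psi_h\coloneqq u_h-z_h$. Since $z_\tau$ is continuous across the interior time nodes, so is $z_h$, whence $\lld z_h\rrd=0$ for $1\le n<N$, exactly as in the regular case. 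Strong monotonicity (Theorem~\ref{thm:dg_stability}) together with consistency (Corollary~\ref{cor:scheme_consistency}) and the scheme \eqref{eq:numerical_scheme} then reduce the task to estimating $\Adg(u;\psi_h)-\Adg(z_h;\psi_h)$, which splits into the same quantities $D_1,\dots,D_4$ and auxiliary quantities $E_1,\dots,E_9$ as in the proof of Theorem~\ref{thm:error_bound_regular}; the triangle inequality $\norme{u-u_h}\le\norme{\xi_h}+\norme{\psi_h}$ completes the reduction.

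The purely spatial estimates are left untouched: the approximation and trace bounds \eqref{eq:pmp_approximation_bound_2}--\eqref{eq:pmp_trace_bound_2} for $\php$, together with the mesh assumptions, produce the spatial terms of \eqref{eq:nonsmooth_error_bound} just as before. The only change in $D_1$, $D_2$, $D_3$ (hence in $E_1,\dots,E_6$) is that, wherever the regular proof invoked \eqref{eq:schotzau_schwab_approximation} to bound a temporal factor $\norm{u-z_\tau}_{H^j(I_n;\Xell)}$, I would instead invoke the Cl\'ement bound \eqref{eq:clement_bochner_approximation}. This replaces each single-interval temporal norm by the patch sum $\sum_{J_m\supset I_n}$ and reproduces the temporal terms of \eqref{eq:nonsmooth_error_bound} with $\ell\in\{0,2\}$, with the $p_K$-powers (up to $\max_K p_K^3$, arising from $E_6$ and the penalty $\eta_F\sim\tilde{p}_F^6/\tilde{h}_F^3$) tracked exactly as in Theorem~\ref{thm:error_bound_regular}.

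The one genuinely new point, and the main obstacle, is the initial-time term $D_4=\omega\,\abs{a_h(\xi_h(0^+),\psi_h(0^+))}$, where $\omega\normah{\psi_h(0^+)}^2$ is a constituent of $\norm{\psi_h}_{h,1}^2$, so $D_4\lesssim\normah{\xi_h(0^+)}\,\norm{\psi_h}_{h,1}$. In the regular case the Sch\"otzau--Schwab projector interpolates at the endpoints, so $z_\tau(0^+)=u_0$ and $\xi_h(0^+)=u_0-\php u_0$ is a pure spatial error. The Cl\'ement operator is \emph{not} interpolatory at $t=0$, so here $\xi_h(0^+)=u_0-\php z_\tau(0^+)$, and I would split
\[
\xi_h(0^+)=(u_0-\php u_0)+\php\bigl(u_0-z_\tau(0^+)\bigr).
\]
The first summand is treated by \eqref{eq:pmp_approximation_bound_2}--\eqref{eq:pmp_trace_bound_2} and \eqref{eq:a_h_coercivity} exactly as the regular-case terms $E_7,E_8,E_9$, yielding the $u_0$-term of \eqref{eq:nonsmooth_error_bound}. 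For the second summand I would control $\normah{\php(u_0-z_\tau(0^+))}$ by the boundedness of $\php$ in the $\normah{\cdot}$-norm (up to $p$-factors consistent with \eqref{eq:nonsmooth_error_bound}), together with the identity $u_0-z_\tau(0^+)=(u-z_\tau)(0)$, valid since $u\in C(\overline{I};X_1)$ by Lemma~\ref{lem:gelfand_triple}. A scaled-in-time trace inequality of the form $\norm{(u-z_\tau)(0)}_{X_1}^2\lesssim\tau_1^{-1}\norm{u-z_\tau}_{L^2(I_1;X_1)}^2+\tau_1\norm{\p_t(u-z_\tau)}_{L^2(I_1;X_1)}^2$, combined with \eqref{eq:clement_bochner_approximation} for $j\in\{0,1\}$ and $\ell=1$, then produces the $\ell=1$ temporal contributions in \eqref{eq:nonsmooth_error_bound}. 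The delicate bookkeeping lives precisely here: one must verify that the trace estimate, after insertion of the Cl\'ement bounds and the $a_h$-norm $p$-scalings, is dominated by the stated $\ell=1$ terms with the correct powers of $\tau_n$, $q_n$ and $p_K$. This is where the rough-solution analysis departs from, and is more subtle than, the regular case; every remaining estimate is a transcription of the proof of Theorem~\ref{thm:error_bound_regular} with patch sums replacing single-interval norms.
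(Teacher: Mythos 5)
Your overall architecture is exactly the paper's: the Cl\'ement interpolant $z_\tau$ composed with $\php$, the same quantities $D_1,\dots,D_4$ and $E_1,\dots,E_9$, and patch sums $\sum_{J_m\supset I_n}$ replacing the single-interval temporal norms in $E_1$--$E_6$. The gap sits precisely in the step you flag as delicate, the initial-time term. The trace inequality you propose, $\norm{(u-z_\tau)(0)}_{X_1}^2\lesssim\tau_1^{-1}\norm{u-z_\tau}_{L^2(I_1;X_1)}^2+\tau_1\norm{\p_t(u-z_\tau)}_{L^2(I_1;X_1)}^2$, requires $u\in H^1(I_1;X_1)$, i.e.\ $\sigma_{m,1}\geq 1$ on the patches containing $I_1$. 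The theorem only assumes $\sigma_{m,\ell}\geq 0$ for $\ell\in\{1,2\}$ (only $\sigma_{m,0}\geq 1$ is guaranteed), and the entire purpose of the rough-solution analysis is to avoid a full temporal derivative with values in $X_1$ or $X_2$. Moreover, even where $\sigma_{m,1}\geq 1$, inserting \eqref{eq:clement_bochner_approximation} with $j=1$, $\ell=1$ into your inequality produces $\tau_1^{2\varrho_{m,1}-1}/q_1^{2\sigma_{m,1}-2}$, which is weaker by one power of $q_1$ than the stated $\tau_n^{2\varrho_{m,1}-1}/q_n^{2\sigma_{m,1}-1}$, so your route does not recover \eqref{eq:nonsmooth_error_bound} as written.

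The paper instead bounds
\begin{equation*}
\norm{u_0-z_\tau(0^+)}_{H^1(\Om)}^2\lesssim\norm{u-z_\tau}_{L^2(I_1;X_2)}\norm{u-z_\tau}_{H^1(I_1;X_0)}+\frac{1}{\tau_1}\norm{u-z_\tau}_{L^2(I_1;X_1)}^2,
\end{equation*}
an asymmetric, parabolic-type trace estimate that follows from the Gelfand-triple identity \eqref{eq:bochner} applied to $w=u-z_\tau$, which gives $\norm{w(0)}_{H^1_0}^2\leq\norm{w(t)}_{H^1_0}^2+2\norm{\p_t w}_{L^2(0,t;X_0)}\norm{w}_{L^2(0,t;X_2)}$, followed by averaging over $t\in I_1$. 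This needs only $w\in L^2(I_1;X_2)\cap H^1(I_1;X_0)$, which is available from $u\in\HIO$ and $\sigma_{m,0}\geq 1$; applying the Cl\'ement bounds with $j=0$ for $\ell\in\{1,2\}$ and $j\in\{0,1\}$ for $\ell=0$ then yields the $\ell\in\{0,1,2\}$ contributions at $n=1$ with exactly the stated powers of $\tau_1$ and $q_1$. You should replace your $X_1$-valued one-dimensional trace inequality by this duality-based estimate; everything else in your proposal matches the paper's proof.
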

\begin{proof}
For each $h$, let $\php\colon L^2(\Om)\tends \Vh$ denote the approximation operator of the proof of Theorem~\ref{thm:error_bound_regular};
for each $\tau$, let $z_\tau \in \Vt$ denote the approximation of $u$ given by Theorem~\ref{thm:clement_bochner_approximation}; then define $z_h\coloneqq \php z_\tau \in \Vth$. The fact that $z_\tau$ is continuous on $(0,T)$ implies that $z_h$ is also continuous on $(0,T)$, so $\lld z_h \rrd=0$ for $1\leq n < N$. Let $\xi_h\coloneqq u-z_h$ and $\psi_h\coloneqq u_h-z_h$, so that $u-u_h = \xi_h - \psi_h$.
As in the proof of Theorem~\ref{thm:error_bound_regular}, it is found that $\norme{\psi_h}^2\leq\norm{\psi_h}_{h,1}^2\lesssim \sum_{i=1}^9 E_i$, where the quantities $E_i$, $1\leq i\leq 9$, are defined as before.
Note that since $\sigma_{m,0}\geq 1$ for all $m$, the bound~\eqref{eq:clement_bochner_approximation} is applicable for $j=1$ and $\ell=0$.
Therefore, the arguments from the proof of Theorem~\ref{thm:error_bound_regular} and the approximation properties of $z_\tau$ from Theorem~\ref{thm:clement_bochner_approximation} imply that
\begin{align*}
E_1& \lesssim \SIn \SK \frac{h_K^{2\overline{t}_K}}{p_K^{2 \overline{s}_K}}\norm{\p_t u}_{H^{\overline{s}_K}(K)}^2\d t
+ \sum_{n=1}^N\sum_{J_m\supset I_n} \frac{t_n^{2\varrho_{m,0}-2}}{q_n^{2\sigma_{m,0}-2}} \norm{u}_{H^{\sigma_{m,0}}(J_m;X_0)}^2,
\\ E_2&\lesssim \SIn\SK \frac{h_K^{2t_K-4}}{p_K^{2s_K-4}}\norm{u}_{H^{s_K}(K)}^2\d t + \sum_{n=1}^N\sum_{J_m\supset I_n} \frac{\tau_n^{2\varrho_{m,2}}}{q_n^{2\sigma_{m,2}}}\norm{u}_{H^{\sigma_{m,2}}(J_m;X_2)}^2,
\\ E_3&\lesssim \SIn\SK \frac{h_K^{2t_K-4}}{p_K^{2s_K-3}}\norm{u}_{H^{s_K}(K)}^2\d t + \sum_{n=1}^N \sum_{J_m\supset I_n} \frac{\tau_n^{2\varrho_{m,2}}}{q_n^{2\sigma_{m,2}}}\norm{u}_{H^{\sigma_{m,2}}(J_m;X_2)}^2,
\\ E_4& \lesssim \SIN\SK \frac{h_K^{2t_K}}{p_K^{2s_K+1}}\norm{u}_{H^{s_K}(K)}^2\d t + \sum_{n=1}^N \sum_{J_m\supset I_n}\frac{\tau_n^{2\varrho_{m,0}}}{q_n^{2\sigma_{m,0}}}\norm{u}_{H^{\sigma_{m,0}}(J_m;X_0)}^2,
\\ E_5 &\lesssim \SIn\SK\frac{h_K^{2t_K-4}}{p_K^{2s_K-5}}\norm{u}_{H^{s_K}(K)}^2  \d t
+ \max_{K\in\calT_h}p_K \sum_{n=1}^N\sum_{J_m\supset I_n}\frac{\tau_n^{2\varrho_{m,2}}}{q_n^{2\sigma_{m,2}}}\norm{u}_{H^{\sigma_{m,2}}(J_m;X_2)}^2,
\\ E_6&\lesssim \SIn\SK \frac{h_K^{2t_K-4}}{p_K^{2 s_K-7}}\norm{u}_{H^{s_K}(K)}^2 \d t
+ \max_{K\in\calT_h} p_K^3 \sum_{n=1}^N \sum_{J_m\supset I_n}\frac{\tau_n^{2\varrho_{m,2}}}{q_n^{2\sigma_{m,2}}}\norm{u}_{H^{\sigma_{m,2}}(J_m;X_2)}^2.
\end{align*}
Using inverse inequalities and $H^1$-stability of $\php$, we find that
\begin{equation*}
\begin{split}
E_7 + E_8  & = \SK \norm{u_0-\php z_\tau (0^+)}_{H^1(K)}^2 
+ \sum_{F\in\calFhib}\mu_F^{-1}\norm{\{ \nabla (u_0-\php z_\tau(0^+))\cdot  n_F \}}_{L^2(F)}^2 \\
 & \lesssim \SK\norm{u_0-\php u_0}_{H^1(K)}^2 + \norm{u_0-z_\tau(0^+)}_{H^1(\Om)}^2  \\
& \lesssim \SK \frac{h_K^{2\tilde{t}_K-2}}{p_K^{2\tilde{s}_K-2}}\norm{u_0}_{H^{\tilde{s}_K}(K)}^2 + \norm{u_0-z_\tau(0^+)}_{H^1(\Om)}^2,
\end{split}
\end{equation*}
Since $\eval{z_\tau}{I_1}\in \calQ_{q_n}(H)$, we have $z_\tau(0^+)\in H^1_0(\Om)$, so
\begin{equation}
\begin{split}
E_9  &=\sum_{F\in\calFhib}\mu_F\norm{\llb u_0-\php z_\tau(0^+)\rrb}_{L^2(F)}^2 \\
&= \sum_{F\in\calFhib}\mu_F\norm{\llb u_0-\php u_0 + \php(u_0-z_\tau(0^+))-(u_0-z_\tau(0^+))\rrb}_{L^2(F)}^2
\\ &\lesssim \SK \frac{h_K^{2\tilde{t}_K-2}}{p_K^{2\tilde{s}_K-3}}\norm{u_0}_{H^{\tilde{s}_K}(K)}^2 + \max_{K\in\calT_h}   p_K \norm{u_0 - z_\tau(0^+)}_{H^1(\Om)}^2.
\end{split}
\end{equation}
Poincar\'{e}'s Inequality and \eqref{eq:clement_bochner_approximation} then show that
\begin{multline*}
\begin{aligned}
\norm{u_0-z_\tau(0^+)}_{H^1(\Om)}^2 &\lesssim \norm{u-z_\tau}_{L^2(I_1;X_2)}\norm{u-z_\tau}_{H^1(I_1;X_0)}+\frac{1}{\tau_1}\norm{u-z_\tau}_{L^2(I_1;X_1)}^2 
\\ &\lesssim  \sum_{J_m\supset I_1} \frac{\tau_1^{2\varrho_{m,2}}}{q_1^{2\sigma_{m,2}}}\norm{u}_{H^{\sigma_{m,2}}(J_m;X_2)}^2
+\frac{\tau_1^{2\varrho_{m,0}-2}}{q_1^{2\sigma_{m,0}-2}}\norm{u}_{H^{\sigma_{m,0}}(J_m;X_0)}^2 
\end{aligned}
\\ +  \sum_{J_m\supset I_1} \frac{\tau_1^{2\varrho_{m,1}-1}}{q_1^{2\sigma_{m,1}}}\norm{u}_{H^{\sigma_{m,1}}(J_m;X_1)}^2 .
\end{multline*}
Since $\norme{\xi_h}^2\lesssim \sum_{i=1}^9 E_i$, the combination of the above bounds with the triangle inequality $\norme{u-u_h}\leq \norme{\xi_h}+\norme{\psi_h}$ completes the proof of \eqref{eq:nonsmooth_error_bound}.
\qed
\end{proof}

\section{Numerical experiments}\label{sec:numexp} 
In the first experiment, we study the performance of the method on a fully nonlinear problem with strongly anisotropic diffusion coefficients, and observe optimal convergence rates for smooth solutions. In the second experiment, we show that the scheme gives exponential convergence rates when combining $hp$-refinement and $\tau q$-refinement, even for problems with rough solutions.

\subsection{First experiment}\label{sec:first_numexp}
We examine the orders of convergence of the method for a problem with strongly anisotropic diffusion coefficients and a smooth solution.
Let $\Om=(0,1)^2$, $I=(0,1)$, let $b^\a\equiv 0$, $c^\a\equiv 0$ and let the $a^\a$ be defined by
\begin{equation}
a^\a \coloneqq \a \begin{pmatrix}
 1 & 1/40 \\ 1/40 & 1/800
 \end{pmatrix}
 \a^\top,
 \quad \a\in \Ld\coloneqq\mathrm{SO}(2),
\end{equation}
where $\mathrm{SO}(2)$ is the special orthogonal group of $2\times 2$ matrices.
For $\omega=1$, $\lambda=0$, it is found that the Cordes condition \eqref{eq:cordes_condition2} holds with $\eps\approx 1.25\times 10^{-3}$. We choose $f^\a$ so that the exact solution is $u = \left(1-\mathrm{e}^{-t}\right)\mathrm{e}^{xy}\sin(\pi x)\sin(\pi y)$.
The strong anisotropy of the diffusion coefficient in this problem implies that monotone finite difference discretisations would require very large stencils in order to achieve consistency~\cite{Bonnans2003}.

The numerical scheme \eqref{eq:numerical_scheme} is applied on a sequence of uniform meshes obtained by regular subdivision of $\Om$ into quadrilateral elements of width $h=2^{-k}$, $1\leq k \leq 5$. The corresponding time partitions $\It$ are obtained by regular subdivision of the time interval $(0,1)$ into intervals of length $\tau=2^{-k+1}$, $1\leq k \leq 5$. The finite element spaces $\Vth$ are defined using polynomials of total degree $p$ in space and degree $q=p-1$ in time, for $p\in\{2,3,4\}$.
We set the penalty parameter $\cstab=5/2$ and $\sigma=1$ in \eqref{eq:mu_eta}. The semismooth Newton method analysed in \cite{Smears2013} is used to compute the numerical solution at each timestep.

In order to study the accuracy of the method, we measure the global error in the norm $\normx{\cdot}$ defined by
\begin{equation}
\normx{v}^2 \coloneqq \SIn\SK \left[\omega^2\norm{\p_t v}_{L^2(K)}^2 + \norm{v}_{H^2(K)}^2\right]\,\d t.
\end{equation}
Figure~\ref{fig:smooth_sol_convergence} presents the global relative errors achieved by the method, where it is seen that the optimal orders of convergence $\normx{u-u_h} \simeq h^{p-1} + \tau^q$ are achieved. The relative end-time errors, naturally measured in the broken $H^1$-norm, are also presented in Figure~\ref{fig:smooth_sol_convergence}, which shows the optimal convergence rates $\norm{u(T)-u_h(T)}_{H^1\left(\Om;\calT_h\right)}\simeq h^p$.
These results show that the method can deliver high accuracy despite the strong anisotropy of the problem and the very small value of the constant $\eps$ appearing in the Cordes condition.

\newlength\figureheight
\newlength\figurewidth
\setlength\figureheight{4cm}
\setlength\figurewidth{4.5cm}
\begin{figure}[t]
\begin{center}
\includegraphics{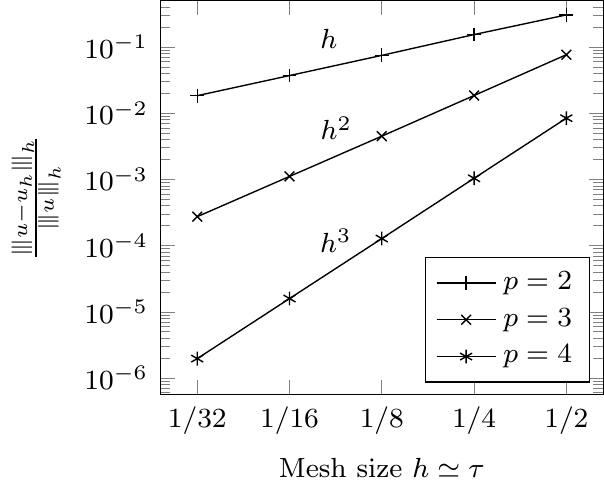}
\hfill
\includegraphics{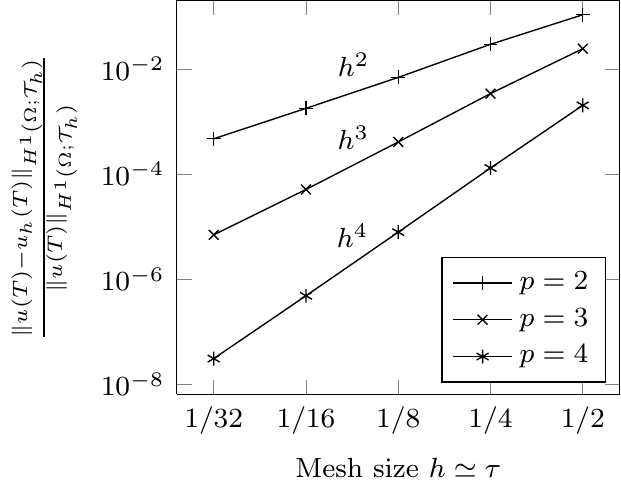}
\caption{Relative errors in approximating the solution of the problem of section~\ref{sec:first_numexp} using uniform meshes and time partitions with $\tau\simeq h$ and $p=q+1$. It is seen that the optimal convergence rates $\normx{u-u_h}\simeq h^{p-1}+\tau^q $ are achieved. The final time error, as measured in the broken $H^1$-norm, also converges with the optimal rate $\norm{u(T)-u_h(T)}_{H^1\left(\Om;\calT_h\right)}\simeq h^p$.}
\label{fig:smooth_sol_convergence}
\end{center}
\end{figure}

\subsection{Second experiment}\label{sec:numexp_2}
In section~\ref{sec:error_analysis_limited}, we considered error bounds for solutions with limited regularity. The significance of these results stems from the fact that the solutions of many parabolic HJB equations possess limited regularity as a result of early-time singularities induced by the initial datum.
This difficulty appears even in the simplest special case of the HJB equation \eqref{eq:HJB}, namely the heat equation: indeed, consider $\p_t u = \Delta u$ in $\Om \times (0,T)$, $\Om=(0,1)^2$, with homogeneous lateral boundary condition $u=0$ on $\DO\times(0,T)$ and initial datum $u_0(x,y) \coloneqq x\left(1-x\right)\sin(\pi y)$. Then, the solution is
\begin{equation}\label{eq:numexp_2}
u(x,y,t) = \frac{4}{\pi^3}\sum_{k=1}^\infty \frac{1-(-1)^k}{k^3}\exp(-(k^2+1)\,\pi^2 t)\sin(k\,\pi x)\sin(\pi y).
\end{equation}
It can be shown that for sufficiently small $t>0$ and nonnegative integers $\sigma$ and $\ell$ such that $2\sigma +\ell \geq 3$, we have $\norm{ \p_t^{\sigma} u }_{X_\ell}^2 \simeq t^{-(2\sigma+\ell-5/2)}$, with the constants of these lower and upper bounds both depending on $\sigma$ and $\ell$, but not on $t$. Therefore, $u \notin H^1(I;H)$, rather $u\in H^{7/4-\delta}(I;L^2(\Om))\cap H^{5/4-\delta}(I;H^1_0(\Om))\cap H^{3/4-\delta}(I;H)$ for arbitrarily small $\delta>0$.
It is noted that a linear problem is chosen here so that the solution may be found explicity through \eqref{eq:numexp_2}. Nevertheless, this example exhibits many features that are typical of more general parabolic problems, so that the following results remain relevant to more general HJB equations.

\begin{figure}[tbh]
\begin{center}
\begin{tabular}{c c c}
\includegraphics[width=2.5cm]{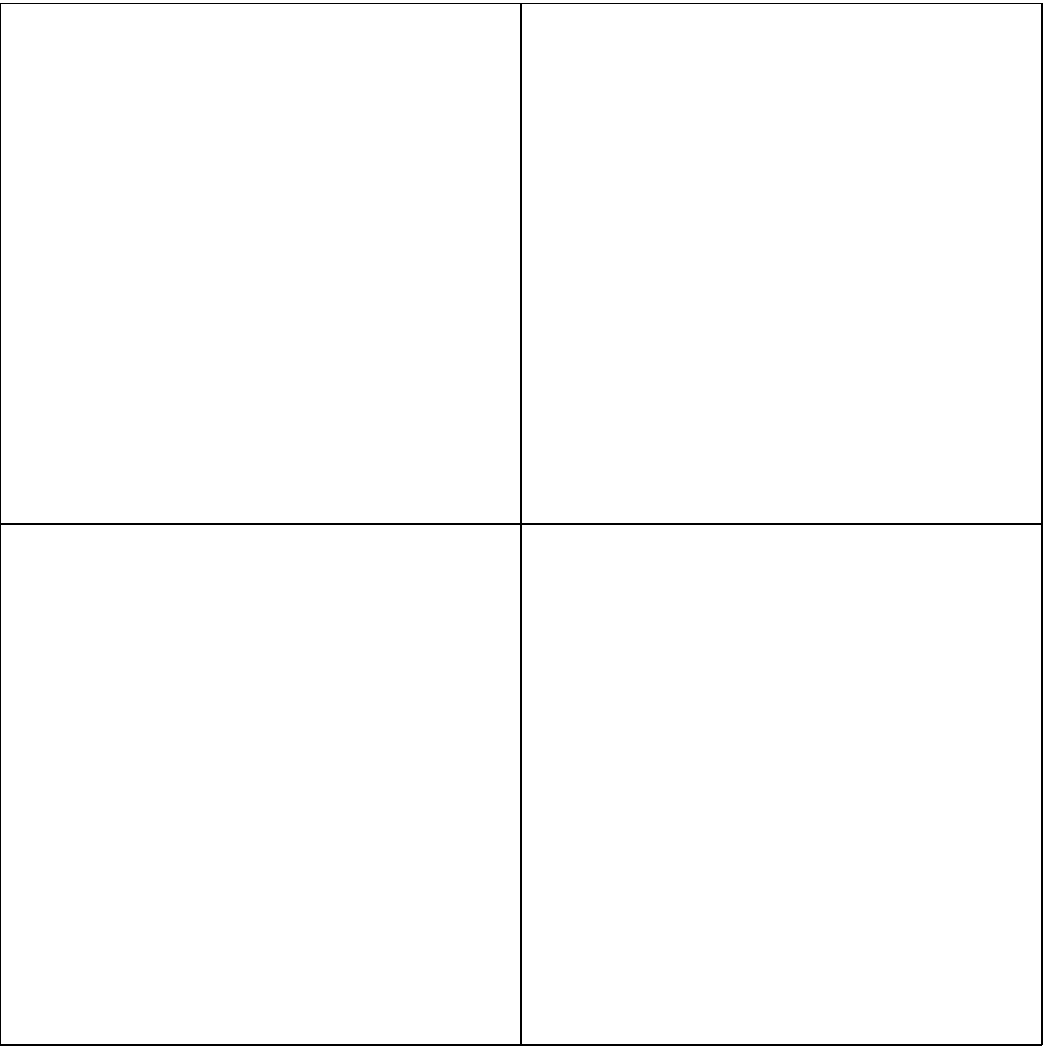}
&\includegraphics[width=2.5cm]{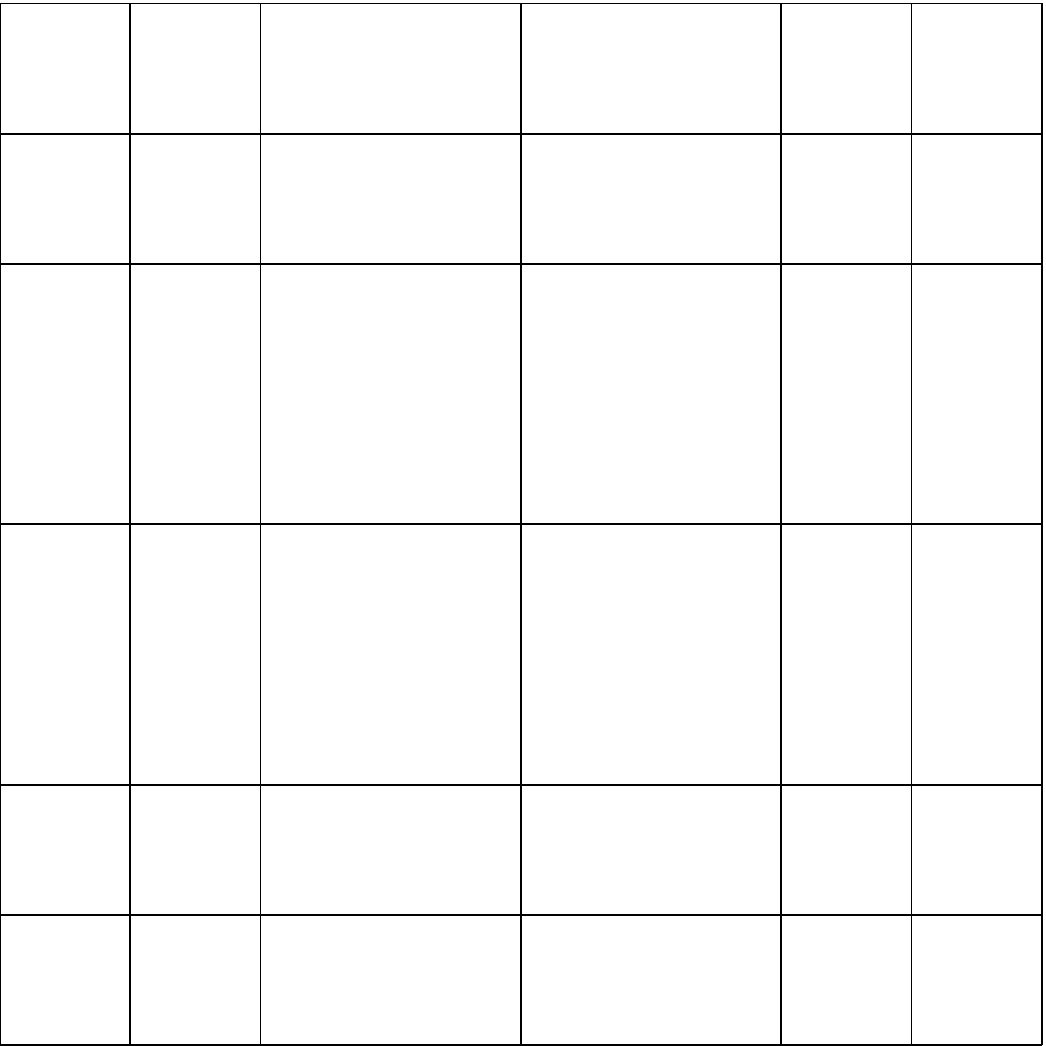}
&\includegraphics[width=2.5cm]{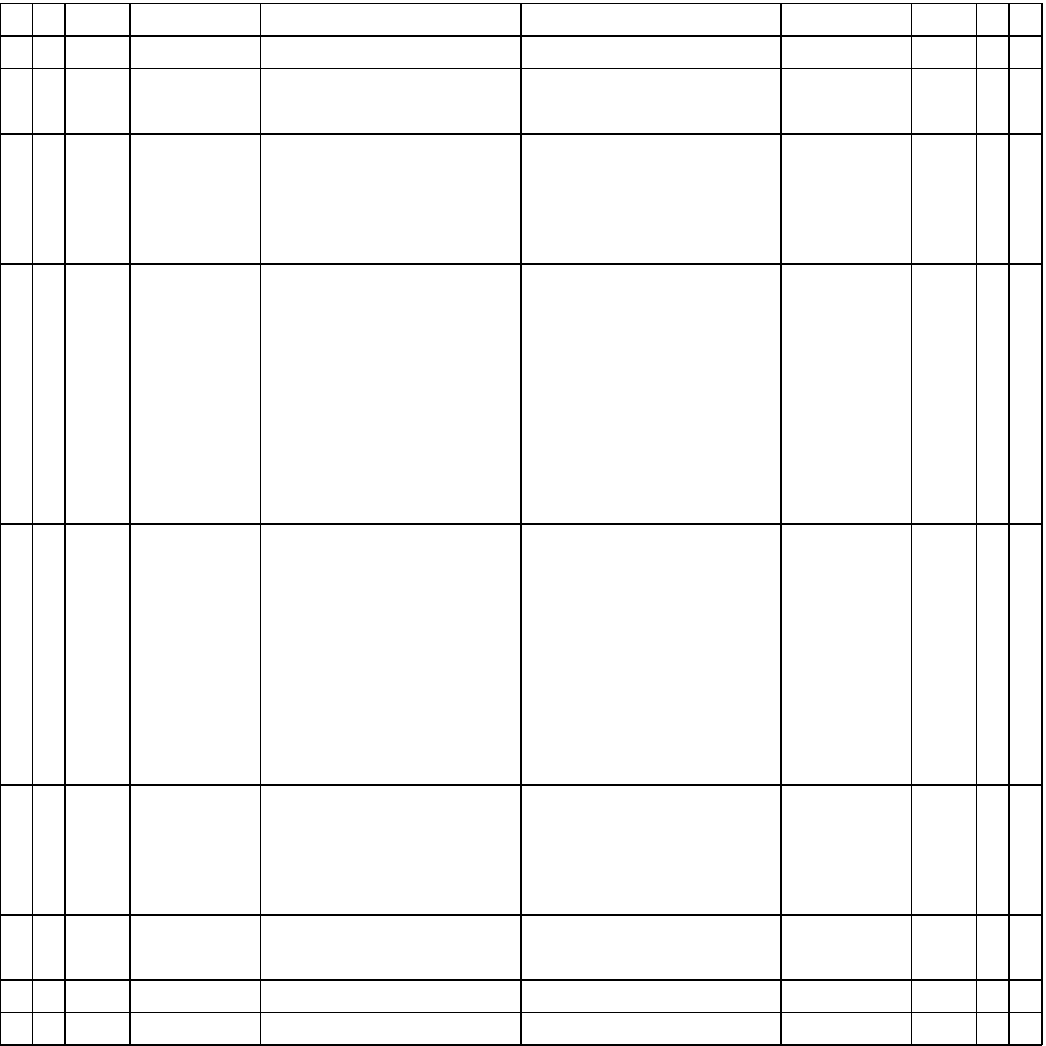}
\end{tabular}
\caption{Geometrically-graded spatial meshes used in conjunction with the geometrically-graded temporal meshes for the problem of section~\ref{sec:numexp_2}. From left to right, the meshes are those used for the first, third and fifth computations. The corresponding number of spatial degrees of freedom $\dofx$ are respectively $100$, $1128$, and $3980$.}
\label{fig:hp_meshes}
\end{center}
\setlength{\figureheight}{3.8cm}
\setlength{\figurewidth}{4.2cm}
\begin{center}
\begin{tabular}{c c}
\includegraphics{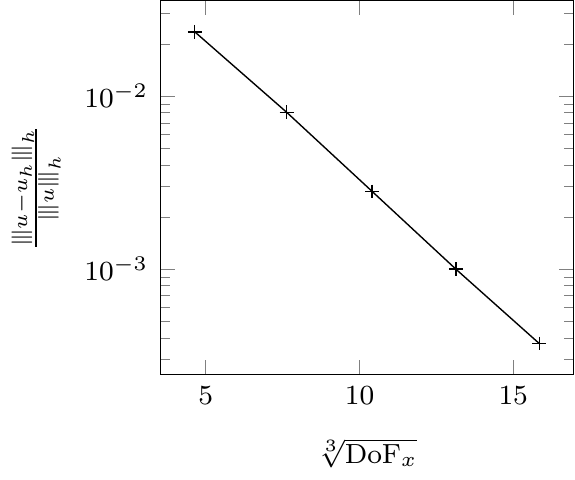} &
\includegraphics{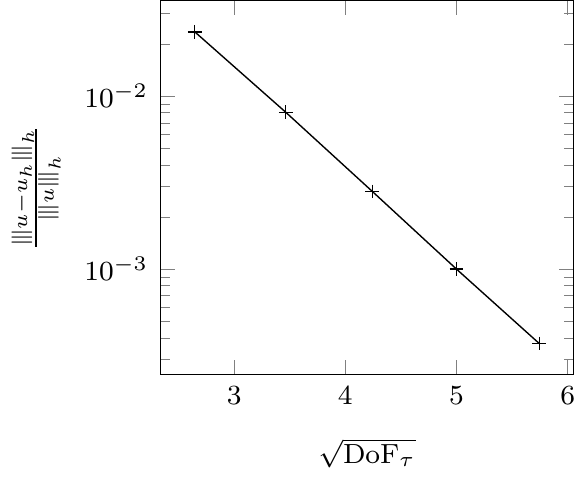}\\
\includegraphics{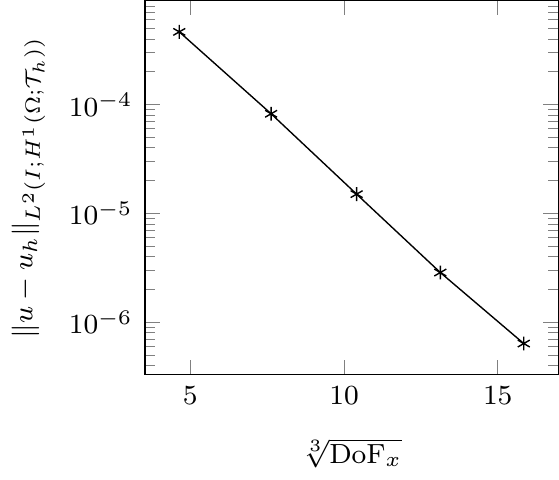} &
\includegraphics{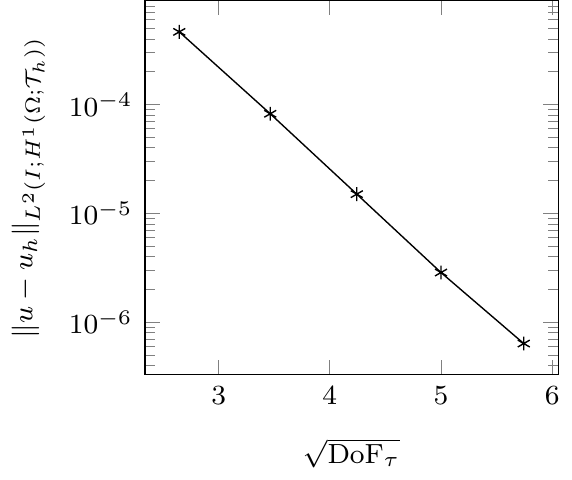}
\end{tabular}
\caption{Exponential convergence rates under $hp$-$\tau q$ refinement for the problem of section~\ref{sec:numexp_2}. The errors in the norms $\normx{\cdot}$ and $\norm{\cdot}_{L^2(I;H^1(\Om;\calT_h))}$ are plotted against $\sqrt[3]{\dofx}$ and $\sqrt{\doft}$, where $\dofx$ is the number of spatial degrees of freedom and $\doft$ is the number of temporal degrees of freedom. Exponential convergence rates of the form of \eqref{eq:exponential_convergence} are confirmed.}
\label{fig:numexp_2}
\end{center}
\end{figure}

Despite the limited regularity of the solution, accurate results can be obtained by using geometrically-graded time partitions with varying temporal polynomial degrees; see~\cite{Schotzau2000}.
Specifically, a combination of $\tau q$-refinement in time and $hp$-refinement in space can lead to a convergence rate
\begin{equation}\label{eq:exponential_convergence}
\normx{u-u_h} \lesssim \exp(- c_1 \sqrt[3]{\dofx} ) + \exp(- c_2 \sqrt{\doft} ),
\end{equation}
where $\dofx\coloneqq \Dim \Vh$, where $\doft = \sum_{n=1}^N (q_n+1)$ is the number of degrees of freedom of the temporal finite element space, and where $c_1$ and $c_2$ are positive constants.
We give here an experimental confirmation of these expectations.

The method is applied on a sequence of geometrically-graded partitions $\{\It\}_{\tau}$ constructed as follows. Let $T=0.05$, and let $t_n = \sigma^{N-n}\,T$ for $n=1,\dots,N$, for a chosen $\sigma \in (0,1)$, and $N=2,\dots,6$. As suggested in \cite{Schotzau2000}, we choose $\sigma =0.2$. The temporal polynomial degrees are linearly increasing with $n$, with $q_n \coloneqq n+1$. We choose $T$ to be small, because in practice it is natural to use $\tau q$-refinement on a small initial time segment, and then apply uniform or spectral refinement on the remaining time interval, see~\cite{Schotzau2000}. The spatial meshes are defined as follows: starting with a regular partition of $\Om$ into four quadrilateral elements, for each successive computation, we refine the meshes geometrically towards the boundary, thereby leading to the meshes given in Figure~\ref{fig:hp_meshes}. The polynomial degrees $p_K\geq 3$ are chosen to be linearly increasing away from the boundary.

Figure~\ref{fig:numexp_2} presents the resulting errors in the norms $\normx{\cdot}$ and $\norm{\cdot}_{L^2(I;H^1(\Om;\calT_h))}$, plotted against $\sqrt[3]{\dofx}$ and $\sqrt{\doft}$. It is found that the convergence rates of \eqref{eq:exponential_convergence} are attained, with higher accuracies being achieved in lower order norms. These results show the computational efficiency of the method for problems with limited regularity.

\section{Conclusion}

We have introduced and analysed a fully-discrete $hp$- and $\tau q$-version DGFEM for parabolic HJB equations with Cordes coefficients. The method is consistent and unconditionally stable, with proven convergence rates. The numerical experiments demonstrated the efficiency and accuracy of the method on problems with strongly anisotropic diffusion coefficients, and illustrated exponential convergence rates for solutions with limited regularity under $hp$- and $\tau q$-refinement.

\appendix
\normalsize
\section{Approximation theory}\label{sec:approximation}

\subsection{Trace theorem for Besov spaces}
We will show that, for a suitable domain $K\subset \R^\dim$, functions in the Besov space $B^{1/2}_{2,1}(K)$ have traces in $L^2(\p K)$.
Recall the discrete form of the J-method of interpolation of function spaces \cite{Adams2003}: a function $u\in L^2(K)$ belongs to $B^{1/2}_{2,1}(K)$ if and only if there exists a sequence $\{u_i\}_{i\in\Z}\subset H^1(K)$, such that $u=\sum_{i\in\Z} u_i$, where the series converges absolutely in $L^2(K)$, and such that the sequence $\{2^{-i/2} J(2^i,u_i)\}_{i\in\Z} \in \ell^1$, where
$J(t,v)\coloneqq\max\{\norm{v}_{L^2(K)},t\norm{v}_{H^1(K)}\}$. Moreover, we may define a norm on $B^{1/2}_{2,1}(K)$ by
\begin{equation}
\norm{u}_{B^{1/2}_{2,1}(K)}\coloneqq\inf\left\{ \norm{ \{2^{- i/2} J(2^i,u_i)\}_{i\in\Z}}_{\ell^1},\,u=\sum\nolimits_{i\in\Z} u_i,\;u_i\in H^1(K)\right\}.
\end{equation}
Also, for any such sequence, we have
\begin{equation}\label{eq:besov_convergence_sequence}
\lim_{m\tends \infty} \norm{u-\sum\nolimits_{\abs{i}\leq m} u_i}_{B^{1/2}_{2,1}(K)} \leq \lim_{m\tends\infty}\sum_{\abs{i}>m} 2^{-i/2}J(2^i,u_i) =0.
\end{equation}
Hence $H^1(K)$ is dense in $B^{1/2}_{2,1}(K)$.

It is sometimes problematic to work with the infinite series representation of a function in the Besov space $B^{1/2}_{2,1}(K)$, as a result of questions concerning convergence of the series in appropriate norms. The following lemma is a key ingredient of our proof of the Trace Theorem, and shows that it is possible to work with representations by finite sums of functions in the dense subspace $H^1(K)$.
\begin{lemma}\label{lem:besov_finite_sum}
Let $K\subset\R^\dim$ be a domain. Then, for each $u\in H^1(K)$, there exists a positive integer $m$ and a finite set $\{u_i\}_{\abs{i}\leq m}\subset H^1(K)$, with $u=\sum_{\abs{i}\leq m} u_i$, and 
\begin{equation}\label{eq:besov_finite_sum_bounded}
\sum_{\abs{i}\leq m} 2^{-i/2} J(2^i,u_i) \lesssim \norm{u}_{B^{1/2}_{2,1}(K)},
\end{equation}
where the constant is independent of all other quantities.
\end{lemma}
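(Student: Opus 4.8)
The plan is to start from a near-optimal infinite J-representation of $u$ and convert it into a finite one by folding the two infinite tails into single, carefully chosen index slots. Since $u\in H^1(K)$, the one-term representation $u_0=u$ already shows $\norm{u}_{B^{1/2}_{2,1}(K)}\le J(1,u)=\norm{u}_{H^1(K)}<\infty$, so I may fix a representation $u=\sum_{i\in\Z}u_i$, with $u_i\in H^1(K)$, satisfying $\sum_{i\in\Z}2^{-i/2}J(2^i,u_i)\le 2\norm{u}_{B^{1/2}_{2,1}(K)}$. For a parameter $m\in\N$ to be fixed at the end, I would split this series into the low tail $r_m^-\coloneqq\sum_{i<-m}u_i$, the finite central block $\sum_{\abs{i}\le m}u_i$, and the high tail $r_m^+\coloneqq\sum_{i>m}u_i$, all understood as $L^2(K)$ limits.

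The next step is to estimate the two tails in the norms that the J-functional makes available at the extremes. From $\norm{u_i}_{H^1(K)}\le 2^{-i}J(2^i,u_i)$ one gets $\sum_{i>m}\norm{u_i}_{H^1(K)}\lesssim 2^{-m/2}\norm{u}_{B^{1/2}_{2,1}(K)}$, so that $r_m^+$ in fact converges in $H^1(K)$ with $\norm{r_m^+}_{H^1(K)}\lesssim 2^{-m/2}\norm{u}_{B^{1/2}_{2,1}(K)}$. Dually, from $\norm{u_i}_{L^2(K)}\le J(2^i,u_i)$ one gets $\norm{r_m^-}_{L^2(K)}\lesssim 2^{-m/2}\norm{u}_{B^{1/2}_{2,1}(K)}$. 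Since $r_m^-=u-\sum_{\abs{i}\le m}u_i-r_m^+$ is a difference of $H^1(K)$ functions, it also lies in $H^1(K)$, and the same J-bounds give the crude estimate $\norm{r_m^-}_{H^1(K)}\lesssim\norm{u}_{H^1(K)}+2^{m/2}\norm{u}_{B^{1/2}_{2,1}(K)}$, the central block contributing the factor $2^{m/2}$.

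To assemble the finite representation, I would absorb $r_m^+$ into the slot $i=0$, since $J(1,r_m^+)=\norm{r_m^+}_{H^1(K)}\lesssim 2^{-m/2}\norm{u}_{B^{1/2}_{2,1}(K)}$ is small; the high tail is easy precisely because it is controlled in the strong norm. For $r_m^-$ (assuming $r_m^-\neq 0$, the other case being trivial) I would instead pick the integer $i^-\le 0$ with $2^{i^-}\le\norm{r_m^-}_{L^2(K)}/\norm{r_m^-}_{H^1(K)}<2^{i^-+1}$, so that $J(2^{i^-},r_m^-)=\norm{r_m^-}_{L^2(K)}$ and the resulting contribution is $2^{-i^-/2}\norm{r_m^-}_{L^2(K)}\lesssim(\norm{r_m^-}_{L^2(K)}\norm{r_m^-}_{H^1(K)})^{1/2}$. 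The decisive observation is that the two tail estimates combine with a cancellation of the factors $2^{-m/2}$ and $2^{m/2}$: $\norm{r_m^-}_{L^2(K)}\norm{r_m^-}_{H^1(K)}\lesssim\norm{u}_{B^{1/2}_{2,1}(K)}^2+2^{-m/2}\norm{u}_{B^{1/2}_{2,1}(K)}\norm{u}_{H^1(K)}$, which is $\lesssim\norm{u}_{B^{1/2}_{2,1}(K)}^2$ once $m$ is chosen large enough (depending on $u$) for the second term to be dominated by the first. Defining the final representation on the index range $\abs{i}\le\max(m,\abs{i^-})$, with $u_0$ replaced by $u_0+r_m^+$, the slot $i^-$ augmented by $r_m^-$, any intervening new slots set to zero, and using subadditivity $J(t,v+w)\le J(t,v)+J(t,w)$, the three contributions sum to a finite J-bound $\lesssim\norm{u}_{B^{1/2}_{2,1}(K)}$, which is the claim.

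The main obstacle, and the only genuinely nontrivial point, is the treatment of the low-frequency tail $r_m^-$: the series defining it need not converge in $H^1(K)$, so one cannot simply place it in a fixed slot and pay its $H^1$ norm. The resolution is to exploit the freedom in the index $i^-$, so that the J-functional is evaluated through the small $L^2$ norm of $r_m^-$, together with the geometric cancellation $2^{-m/2}\cdot 2^{m/2}=1$ between the $L^2$ smallness of $r_m^-$ and the possibly growing $H^1$ size of the central block. It is precisely this cancellation that keeps the low-tail contribution bounded by $\norm{u}_{B^{1/2}_{2,1}(K)}$ rather than by the much larger $\norm{u}_{H^1(K)}$.
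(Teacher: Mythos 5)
Your argument is correct and rests on the same mechanism as the paper's proof: truncate a near-optimal $J$-representation, use the $L^2$-smallness ($\lesssim 2^{-m/2}\norm{u}_{B^{1/2}_{2,1}(K)}$) of the discarded part together with the relation $\norm{u}_{H^1(K)}\leq 2^{m/2}\norm{u}_{B^{1/2}_{2,1}(K)}$ fixing $m$, and place the remainder in a slot where $J$ is evaluated through its $L^2$ norm. The only cosmetic difference is bookkeeping: the paper folds both tails into the single slot $i=-m$, whereas you send the $H^1$-convergent high tail to slot $0$ and the low tail to an adaptively balanced slot $i^-$ (which ends up near $-m$ anyway), so the two proofs are essentially the same.
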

\begin{proof}
Since the case $u=0$ is trivial, we assume that $u\neq 0$. Since $H^1(K)$ is embedded in $B^{1/2}_{2,1}(K)$, there exists a sequence $\{v_i\}_{i\in\Z}\subset H^1(K)$ such that $u=\sum_{i\in\Z} v_i$, and such that $\norm{\{2^{-i/2} J(2^i,v_i)\}_i}_{\ell^1}\leq \sqrt{2} \norm{u}_{B^{1/2}_{2,1}(K)}$. Let $m\geq 1$ be the smallest integer such that $\norm{u}_{H^1(K)}\leq 2^{m/2} \norm{u}_{B^{1/2}_{2,1}(K)}$.
The series $\sum_{i\in\Z} v_i$ converges absolutely to $u$ in $L^2(K)$, since $\sum_{i\in\Z} \norm{v_i}_{L^2(K)} \leq \sum_{i\in\Z} 2^{-i/2} J(2^i,v_i) \leq \sqrt{2}\norm{u}_{B^{1/2}_{2,1}(K)}$.
Therefore,
\begin{align}
\label{eq:besov_l2_bound}
&\norm{u-\sum\nolimits_{\abs{i}< m} v_i}_{L^2(K)} \leq \sum\nolimits_{\abs{i}\geq m} \norm{ v_i}_{L^2(K)} 
\\ & \qquad\qquad\leq 2^{-m/2} \sum\nolimits_{\abs{i}\geq m} 2^{-i/2}J(2^i,v_i) \leq 2^{-(m-1)/2} \norm{u}_{B^{1/2}_{2,1}(K)}. \nonumber 
\\ \label{eq:besov_h1_bound}
& \sum_{\abs{i}< m} \norm{v_i}_{H^1(K)}\leq 2^{(m-1)/2} \sum_{\abs{i}< m} 2^{i/2} \norm{v_i}_{H^1(K)} \leq 2^{m/2}\norm{u}_{B^{1/2}_{2,1}(K)}.
\end{align}
Now, define $u_i\coloneqq v_i$ for $\abs{i} < m$, and $u_{-m}\coloneqq u-\sum_{\abs{i}<m}u_i$, whilst $u_i \coloneqq 0$ otherwise. By hypothesis, $u\in H^1(K)$, so $u_{-m}\in H^1(K)$, and we have $u=\sum_{\abs{i}\leq m} u_i$. It follows from \eqref{eq:besov_l2_bound} that $2^{m/2} \norm{u_{-m}}_{L^2(K)}\leq \sqrt{2}\norm{u}_{B^{1/2}_{2,1}(K)}$. 
The choice of the integer $m$ and the bound \eqref{eq:besov_h1_bound} show that
\[
2^{-m/2}\norm{u_{-m}}_{H^1(K)}
\leq 2^{-m/2}\left(\norm{u}_{H^1(K)} + \sum\nolimits_{\abs{i}< m}\norm{v_i}_{H^1(K)}\right)
\leq 2 \norm{u}_{B^{1/2}_{2,1}(K)}.
\]
Therefore, $2^{m/2}J(2^{-m},u_{-m})\lesssim \norm{u}_{B^{1/2}_{2,1}(K)}$, and we find that \eqref{eq:besov_finite_sum_bounded} holds with a constant that is independent of all other quantities, thereby showing that the set $\{u_i\}_{\abs{i}\leq m}$ fulfills all of the above claims.
\qed
\end{proof}

\begin{theorem}\label{thm:mesh_besov_trace_theorem}
Let $\Om\subset\R^\dim$ be a bounded Lipschitz polytopal domain, and let $\{\calT_h\}_h$ be a shape-regular sequence of simplicial or parallelepipedal meshes on $\Om$. Then, for each $\calT_h$ and each $K\in\calT_h$, the trace operator $\gamma\colon H^1(K)\tends L^2(\p K)$ has a unique extension to a bounded linear operator on $B^{1/2}_{2,1}(K)$, and there holds
\begin{equation}\label{eq:mesh_besov_trace}
\norm{\gamma u}_{L^2(\p K)}\lesssim  \norm{u}_{B^{1/2}_{2,1}(K)}+h_K^{-1/2} \norm{u}_{L^2(K)} \qquad\forall\, u\in B^{1/2}_{2,1}(K).
\end{equation}
\end{theorem}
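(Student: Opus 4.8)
The plan is to reduce first to the dense subspace $H^1(K)$ and then exploit the finite-sum representation furnished by Lemma~\ref{lem:besov_finite_sum}. Since $H^1(K)$ is dense in $B^{1/2}_{2,1}(K)$ and, by the embedding $B^{1/2}_{2,1}(K)\hookrightarrow L^2(K)$, the right-hand side of \eqref{eq:mesh_besov_trace} is bounded by a (possibly $h_K$-dependent) multiple of $\norm{u}_{B^{1/2}_{2,1}(K)}$, it suffices to establish \eqref{eq:mesh_besov_trace} for $u\in H^1(K)$ with a constant independent of $u$. The existence and uniqueness of a bounded extension of $\gamma$ to $B^{1/2}_{2,1}(K)$, together with the validity of \eqref{eq:mesh_besov_trace} for the extension, then follow from the standard bounded-linear-transformation argument and the continuity of all norms involved with respect to $\norm{\cdot}_{B^{1/2}_{2,1}(K)}$.

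The key analytic tool is a parametric trace inequality. By scaling from a reference element and using shape-regularity, the multiplicative trace inequality $\norm{\gamma v}_{L^2(\p K)}^2\lesssim h_K^{-1}\norm{v}_{L^2(K)}^2+\norm{v}_{L^2(K)}\norm{\nabla v}_{L^2(K)}$ holds for all $v\in H^1(K)$, with a constant depending only on the shape-regularity. Applying Young's inequality to the cross term and using $h_K^{-1}\le t^{-1}$ then gives, for every $0<t\le h_K$ and every $v\in H^1(K)$,
\begin{equation*}
\norm{\gamma v}_{L^2(\p K)}^2\lesssim t^{-1}\norm{v}_{L^2(K)}^2+t\norm{\nabla v}_{L^2(K)}^2.
\end{equation*}
The restriction $t\le h_K$ is genuine, as the example $v\equiv 1$ shows, and this is exactly what forces the dyadic decomposition below to distinguish scales lying above and below $h_K$.

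Given $u\in H^1(K)$, I would invoke Lemma~\ref{lem:besov_finite_sum} to write $u=\sum_{\abs{i}\le m}u_i$ with $u_i\in H^1(K)$ and $\sum_{\abs{i}\le m}2^{-i/2}J(2^i,u_i)\lesssim\norm{u}_{B^{1/2}_{2,1}(K)}$, and then split this sum into the fine scales $2^i\le h_K$ and the coarse scales $2^i>h_K$. For each fine scale I would apply the parametric inequality with $t=2^i$: since $\norm{u_i}_{L^2(K)}\le J(2^i,u_i)$ and $2^i\norm{u_i}_{H^1(K)}\le J(2^i,u_i)$, both resulting terms are bounded by $2^{-i}J(2^i,u_i)^2$, whence $\norm{\gamma u_i}_{L^2(\p K)}\lesssim 2^{-i/2}J(2^i,u_i)$, and the triangle inequality sums these contributions to $\lesssim\norm{u}_{B^{1/2}_{2,1}(K)}$. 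For the coarse scales I would lump them together as $u_c\coloneqq\sum_{2^i>h_K}u_i$ and apply the parametric inequality with $t=h_K$; its gradient term $h_K^{1/2}\norm{\nabla u_c}_{L^2(K)}$ is controlled by $\sum_{2^i>h_K}h_K^{1/2}\norm{u_i}_{H^1(K)}\le\sum_{2^i>h_K}h_K^{1/2}2^{-i}J(2^i,u_i)\le\sum_{2^i>h_K}2^{-i/2}J(2^i,u_i)\lesssim\norm{u}_{B^{1/2}_{2,1}(K)}$, using $h_K^{1/2}2^{-i/2}\le 1$ on coarse scales.

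The main obstacle — and the reason for the additional $h_K^{-1/2}\norm{u}_{L^2(K)}$ term in \eqref{eq:mesh_besov_trace} — is the $L^2$ contribution $h_K^{-1/2}\norm{u_c}_{L^2(K)}$ from the coarse block, which does not scale like the Besov norm and cannot be summed scale-by-scale. I would handle it by writing $u_c=u-u_f$ with $u_f\coloneqq\sum_{2^i\le h_K}u_i$, so that $\norm{u_c}_{L^2(K)}\le\norm{u}_{L^2(K)}+\norm{u_f}_{L^2(K)}$; on the fine block one has $\norm{u_f}_{L^2(K)}\le\sum_{2^i\le h_K}J(2^i,u_i)\le h_K^{1/2}\sum_{2^i\le h_K}2^{-i/2}J(2^i,u_i)\lesssim h_K^{1/2}\norm{u}_{B^{1/2}_{2,1}(K)}$, the factor $h_K^{1/2}$ arising because $2^{i/2}\le h_K^{1/2}$ on fine scales. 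Hence $h_K^{-1/2}\norm{u_c}_{L^2(K)}\lesssim h_K^{-1/2}\norm{u}_{L^2(K)}+\norm{u}_{B^{1/2}_{2,1}(K)}$. Collecting the fine-scale sum, the coarse-block gradient term, and this coarse-block $L^2$ term through the triangle inequality $\norm{\gamma u}_{L^2(\p K)}\le\sum_{2^i\le h_K}\norm{\gamma u_i}_{L^2(\p K)}+\norm{\gamma u_c}_{L^2(\p K)}$ yields \eqref{eq:mesh_besov_trace} on $H^1(K)$, which completes the proof after the density and extension argument of the first paragraph.
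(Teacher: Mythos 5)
Your proof is correct, and while it shares the paper's skeleton (the finite-sum representation from Lemma~\ref{lem:besov_finite_sum}, the multiplicative trace inequality \eqref{eq:mesh_mult_trace_ineq}, and the density/extension argument), it neutralises the troublesome $h_K^{-1}\norm{\cdot}_{L^2(K)}^2$ term in the trace inequality by a genuinely different device. The paper subtracts the mean value $\overline{u}$ over $K$ and invokes the Poincar\'e inequality $\norm{u_i-\overline{u}_i}_{L^2(K)}\lesssim h_K\abs{u_i}_{H^1(K)}$, so that every term of the sum is controlled by $\abs{u_i}_{H^1(K)}^{1/2}\norm{u_i}_{L^2(K)}^{1/2}$ uniformly in $i$; the weighted AM--GM at scale $2^i$ (which is exactly your Young step with $t=2^i$) then sums all scales at once, and the constant $\overline{u}$ alone produces the $h_K^{-1/2}\norm{u}_{L^2(K)}$ term. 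You instead keep the parametric inequality subject to the constraint $t\leq h_K$ and compensate by splitting the dyadic sum into fine scales $2^i\leq h_K$ (handled term by term with $t=2^i$) and a lumped coarse block (handled with $t=h_K$), the latter being the source of the $h_K^{-1/2}\norm{u}_{L^2(K)}$ term; your observation that $\norm{u_f}_{L^2(K)}\lesssim h_K^{1/2}\norm{u}_{B^{1/2}_{2,1}(K)}$ is the step that makes this lumping harmless. The paper's mean/fluctuation split is shorter and avoids the case distinction between scales; your fine/coarse split is more in the spirit of a Littlewood--Paley argument, makes the role of the threshold $h_K$ explicit, and does not rely on the Poincar\'e inequality on $K$, so it would transfer to settings where a scale-correct Poincar\'e constant is less readily available. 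Both yield the stated bound with constants depending only on shape-regularity.
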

\begin{proof}
For an element $K\in\calT_h$, let $\gamma\colon H^1(K)\tends L^2(\p K)$ denote the trace operator. First, we claim that
\begin{equation}\label{eq:mesh_trace_bound_1}
\norm{\gamma u}_{L^2(\p K)} \lesssim \norm{u}_{B^{1/2}_{2,1}(K)} + h_K^{-1/2}\norm{u}_{L^2(K)} \quad\forall\, u\in H^1(K).
\end{equation}
For a given $u\in H^1(K)$, Lemma~\ref{lem:besov_finite_sum} shows that there exists a finite set $\{u_i\}_{\abs{i}\leq m}\subset H^1(K)$ such that $u=\sum_{\abs{i}\leq m} u_i$, and such that \eqref{eq:besov_finite_sum_bounded} holds. Since $\{\calT_h\}_h$ is a shape-regular sequence of simplicial or parallelepipedal meshes, we have the multiplicative trace inequality (c.f.\ \cite{DiPietro2012,Monk1999})
\begin{equation}\label{eq:mesh_mult_trace_ineq}
\norm{\gamma u}_{L^2 (\p K)}\lesssim \left( \abs{u}_{H^1(K)}+h_K^{-1}\norm{u}_{L^2(K)}\right)^{1/2}\norm{u}_{L^2(K)}^{1/2}\quad\forall\,u\in H^1(K),
\end{equation}
where the constant depends only the dimension $\dim$ and the shape-regularity of $\{\calT_h\}_h$.
We remark that the multiplicative trace inequality was proven for the case of triangles in two dimensions in \cite{Monk1999}, and can be extended to simplices and parallelepipeds in $\R^\dim$, see \cite{DiPietro2012}.
Let $\overline{u}$ denote the mean-value of $u$ over $K$, and note that $\norm{u-\overline{u}}_{L^2(K)} \lesssim h_K\abs{u}_{H^1(K)}$, see \cite{Brenner2008}.
Then, $u-\overline{u}=\sum_{\abs{i}\leq m} (u_i-\overline{u}_i)$, and \eqref{eq:mesh_mult_trace_ineq} implies that
\begin{equation}
\begin{aligned}
\norm{\gamma(u-\overline{u})}_{L^2(\p K)} &\lesssim 
\sum_{\abs{i}
\leq m} \left(\abs{u_i}_{H^1(K)}+h_K^{-1}\norm{u_i-\overline{u}_i}_{L^2(K)}\right)^{1/2}\norm{u_i-\overline{u}_i}^{1/2}_{L^2(K)}
\\ &\lesssim \sum_{\abs{i}\leq m} \abs{u_i}_{H^1(K)}^{1/2}\norm{u_i}_{L^2(K)}^{1/2}
 \\ &\lesssim \sum_{\abs{i}\leq m} 2^{-i/2}\norm{u_i}_{L^2(K)}+2^{i/2}\norm{u_i}_{H^1(K)}
\\ & \lesssim \sum_{\abs{i}\leq m} 2^{-i/2} J(2^i,u_i)
\lesssim \norm{u}_{B^{1/2}_{2,1}(K)}.
\end{aligned}
\end{equation}
It is also easily found that $\norm{\gamma \overline{u}}_{L^2(\p K)} \lesssim h_K^{-1/2} \norm{u}_{L^2(K)}$. Therefore, the bound \eqref{eq:mesh_trace_bound_1} follows from the above bounds and the triangle inequality. Thus, the trace operator $\gamma$ is uniformly bounded in the norm of $B^{1/2}_{2,1}(K)$ over the space $H^1(K)$, which is densely embedded in $B^{1/2}_{2,1}(K)$. Hence, $\gamma$ has a unique extension to a bounded linear operator $\gamma \colon B^{1/2}_{2,1}(K)\tends L^2(\p K)$, and \eqref{eq:mesh_besov_trace} holds.
\qed
\end{proof}

In the following, we will often omit any explicit reference to the trace operator $\gamma$. For example, we shall write $\norm{u}_{L^2(\p K)}$ rather than $\norm{\gamma u}_{L^2(\p K)}$.

%%%
%%%
%%%

\subsection{Polynomial approximation in Sobolev spaces}%
We recall the results from \cite{Babuvska1987a}. For a positive integer $\dim$ and a nonnegative integer $p$, let $\mathcal{P}_p$ denote the space of real valued polynomials on $\R^\dim$ with either partial or total degree at most $p$.
\begin{lemma}\label{lem:polynomial_transformation}
For a nonnegative integer $p$ and $\rho\in\R_{>0}$, a function $u\colon (-\rho,\rho)\tends \R$ is an algebraic polynomial of degree at most $p$ if and only if the function $V\colon \xi \mapsto u(\rho\sin\xi)$ is a trigonometric polynomial of degree at most $p$.
\end{lemma}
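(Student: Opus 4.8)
The plan is to prove the two implications separately, with the nontrivial content residing in the ``trigonometric $\Rightarrow$ algebraic'' direction.

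For the forward implication, I would simply substitute. Writing $u(x)=\sum_{k=0}^p a_k x^k$, we have $V(\xi)=\sum_{k=0}^p a_k\rho^k(\sin\xi)^k$, so it suffices to note that each power $(\sin\xi)^k$ is a trigonometric polynomial of degree at most $k$. This follows at once from the de Moivre expansion $(\sin\xi)^k=(2\im)^{-k}(\e^{\im\xi}-\e^{-\im\xi})^k$, whose frequencies all lie in $\{-k,\dots,k\}$. Summing over $k\le p$ shows that $V$ is a trigonometric polynomial of degree at most $p$.

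For the converse, the key observation is a reflection symmetry. Since $\sin(\pi-\xi)=\sin\xi$, the function $V(\xi)=u(\rho\sin\xi)$ satisfies $V(\pi-\xi)=V(\xi)$. Writing the unique representation $V(\xi)=\sum_{j=0}^p(\alpha_j\cos(j\xi)+\beta_j\sin(j\xi))$ and using $\cos(j(\pi-\xi))=(-1)^j\cos(j\xi)$ and $\sin(j(\pi-\xi))=-(-1)^j\sin(j\xi)$ together with the linear independence of the trigonometric system, I would deduce that $\alpha_j=0$ for odd $j$ and $\beta_j=0$ for even $j$. Thus only the terms $\cos(2m\xi)$ and $\sin((2m+1)\xi)$ survive. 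The final step is to rewrite each surviving term as an algebraic polynomial in $s\coloneqq\sin\xi$. For the cosines, $\cos(2m\xi)=T_m(\cos(2\xi))$ is a polynomial of degree $m$ in $\cos(2\xi)=1-2s^2$, hence an even polynomial in $s$ of degree $2m\le p$. For the sines, the identity $\sin((2m+1)\xi)=\sin\xi\,U_{2m}(\cos\xi)$, with $U_{2m}$ the Chebyshev polynomial of the second kind---an \emph{even} polynomial of degree $2m$---shows that $\sin((2m+1)\xi)=s\cdot(\text{polynomial in }\cos^2\xi=1-s^2)$ is an odd polynomial in $s$ of degree $2m+1\le p$. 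Consequently $V(\xi)=P(\sin\xi)$ for some algebraic polynomial $P$ with $\deg P\le p$. Since $\xi\mapsto\sin\xi$ maps onto $[-1,1]$, the identity $u(\rho\sin\xi)=P(\sin\xi)$ forces $u(x)=P(x/\rho)$ for all $x\in(-\rho,\rho)$, so $u$ is an algebraic polynomial of degree at most $p$.

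The main obstacle is the bookkeeping in the converse: one must verify that precisely the symmetric terms $\cos(2m\xi)$ and $\sin((2m+1)\xi)$ reduce to genuine polynomials in $\sin\xi$, with no residual factor of $\cos\xi$. This is exactly what the parities of the Chebyshev polynomials $T_j$ and $U_j$ guarantee, and it is why the reflection symmetry---which eliminates the wrong-parity terms $\cos((2m+1)\xi)$ and $\sin(2m\xi)$ that would carry such a factor---is indispensable.
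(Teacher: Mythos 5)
Your proof is correct and follows essentially the same route as the paper's: both directions rest on the reflection symmetry $V(\pi-\xi)=V(\xi)$, which kills exactly the $\cos((2m+1)\xi)$ and $\sin(2m\xi)$ modes, after which the surviving terms $\cos(2m\xi)$ and $\sin((2m+1)\xi)$ are shown to be algebraic polynomials in $\sin\xi$ of degrees $2m$ and $2m+1$. The only cosmetic differences are that you extract the symmetry constraint via linear independence of the trigonometric system where the paper computes the corresponding Fourier integrals, and you invoke the parity of the Chebyshev polynomials $T_j$, $U_j$ where the paper derives the equivalent recurrences for $\sin((2k+1)\arcsin(x/\rho))$ and $\cos(2k\arcsin(x/\rho))$ by hand.
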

\begin{proof}
Suppose that $u$ is an algebraic polynomial of degree at most $p$. Then it is easily found that $V$ is a trigonometric polynomial of degree at most $p$. To show the converse, suppose that $V$ is a trigonometric polynomial of degree at most $p$. Observe that $V$ is necessarily symmetric about $\pm \pi/2$, and thus we have, for any $k\geq 0$,
\begin{align}\label{eq:V_identities}
&\int_{-\pi}^{\pi} V(\xi) \sin (2k \xi)\, \d \xi  = 0, &\int_{-\pi}^{\pi} V(\xi) \cos( \left(2k+1\right) \xi) \, \d \xi=0.\end{align}
Indeed, the first identity in \eqref{eq:V_identities} is found by writing
\begin{equation}\label{eq:algebraic_trig_correspondence}
\begin{split}
\int_{-\pi}^{\pi} v(\xi) \sin (2k \xi)\,\d \xi
&=\int_{0}^{\pi} \left(v(\xi) - v(-\xi)\right) \sin(2k\xi)\, \d \xi \\
&= (-1)^k \int_{-\pi/2}^{\pi/2}\left(v(\tfrac{\pi}{2} + \delta ) - v(-\tfrac{\pi}{2} + \delta)\right) \sin(2k\delta) \,\d \delta,
\end{split}
\end{equation}
and by noting that the right-hand side of \eqref{eq:algebraic_trig_correspondence} is the integral of an odd function over an interval centred about $\delta=0$, as a result of the symmetry of $V$. The proof of the second identity in \eqref{eq:V_identities} is analogous.

Since $V$ is a trigonometric polynomial of degree at most $p$, it follows from \eqref{eq:V_identities} that
\[
V(\xi) = \sum_{1\leq 2k+1\leq p} a_k \sin(\left(2k +1\right)\xi) + \sum_{0\leq 2k\leq p} b_k \cos(2k \xi).
\]
For $x\in (-\rho,\rho)$ and $k\geq 0$, define $P_{2k+1}(x) \coloneqq \sin(\left(2k+1\right)\arcsin(x/\rho))$ and $ Q_{2k}(x) \coloneqq \cos(2k\arcsin(x/\rho))$.
So, for example, $Q_0(x)=1$, $P_1(x) = x$, and $Q_2(x) = 1-2 x^2$. Therefore, $u$ may be written as $u(x) = \sum_{1\leq 2k+1\leq p} a_k \,P_{2k+1}(x) + \sum_{0\leq 2k\leq p} b_k \,Q_{2k}(x)$.
The recurrence relations $P_{2k+1}(x)  = P_{2k-1}(x) + 2\, x\, Q_{2k}(x)$ and $Q_{2k+2}(x) = 2\, Q_2(x) \, Q_{2k}(x) - Q_{2k-2}(x)$, for all $k\geq 1$, allow us to deduce that $P_{2k+1} \in \calP_{2k+1}$ and that $Q_{2k}\in \calP_{2k}$ for each $k\geq 0$, where $\calP_p$ denotes here the space of univariate polynomials of degree at most $p$. It then follows that $u\in \calP_p$.
\qed
\end{proof}

\begin{theorem}\label{thm:babuska_suri}
Let $Q\subset[-1,1]^\dim$ be either the unit hypercube or the unit simplex in $\R^\dim$, $\dim\geq 1$. For each integer $p\geq0$, there exists a linear operator $\pj^p\colon L^2(Q)\tends \calP_p$, with the following properties. There is a constant $C$, independent of $p$, such that
\begin{equation}\label{eq:babuskasuri_l2_stability}
\norm{\pj^p u}_{L^2(Q)} \leq C \norm{u}_{L^2(Q)} \qquad\forall\,u\in L^2(Q).
\end{equation}
For nonnegative integers $j\leq s$, there is a constant $C$, independent of $p$ but dependent on $s$, such that
\begin{equation}\label{eq:babuskasuri_approximation}
\norm{u-\pj^p u }_{H^j(Q)} \leq C (p+1)^{-(s-j)} \norm{u}_{H^s(Q)}\qquad \forall \,u\in H^s(Q).
\end{equation}
\end{theorem}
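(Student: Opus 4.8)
The plan is to reduce the statement to a one-dimensional construction, to build the operator $\pj^p$ there as an $L^2$-orthogonal projection, to lift it to the hypercube by tensorisation, and finally to transfer it to the simplex by a Sobolev extension followed by restriction.

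On the reference interval $(-1,1)$ I would take $\pj^p$ to be the $L^2(-1,1)$-orthogonal projection onto $\calP_p$, that is, the truncation of the Legendre expansion at degree $p$. The $L^2$-stability \eqref{eq:babuskasuri_l2_stability} is then immediate, with $C=1$, since an orthogonal projection is a contraction. The content is the approximation bound \eqref{eq:babuskasuri_approximation} in one dimension, and this is where Lemma~\ref{lem:polynomial_transformation} is meant to be used: setting $U(\xi)\coloneqq u(\sin\xi)$, the substitution $x=\sin\xi$ places algebraic polynomials of degree $p$ in bijection with trigonometric polynomials of degree $p$, so the approximation of $u$ from $\calP_p$ reduces to trigonometric approximation of $U$, for which classical Jackson-type estimates apply. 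The change of variables carries the Chebyshev weight $\sqrt{1-x^2}$, so the estimates obtained this way live in Chebyshev-weighted norms; relating these to the unweighted $H^j$ norms of \eqref{eq:babuskasuri_approximation} is delicate near the endpoints $x=\pm 1$ and is most cleanly handled by analysing directly the decay of the Legendre coefficients of $u$ through the derivative recurrences of the Jacobi polynomials, with $j=0$ bounded simply by best approximation and $j\geq 1$ recovered from the coefficient decay.

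I would then extend to the unit hypercube $Q=(-1,1)^\dim$ by tensor product, taking $\pj^p$ to be the composition of the one-dimensional projections acting in each coordinate in turn. The $L^2$-stability follows from Fubini's theorem together with the one-dimensional bound, and the $H^j$-estimate follows from a telescoping decomposition of $I-\pj^p$ into a sum of terms, each applying the one-dimensional error estimate in a single direction while using stability of the one-dimensional projections in the remaining directions to control the mixed derivatives. This gives the result on the cube for the space of polynomials of partial degree at most $p$.

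The main obstacle is the simplex, together with the degree bookkeeping it forces. The natural route is to fix a Sobolev extension operator $E\colon H^s(Q)\tends H^s((-1,1)^\dim)$ bounded uniformly across the relevant range of $s$, such as a Stein extension, and to set $\pj^p\coloneqq R\circ\pj^p_{\mathrm{cube}}\circ E$, with $R$ the restriction to the simplex; both estimates then transfer through boundedness of $E$ and $R$, since $\calP_p$ is preserved under restriction. The difficulty is that a tensor-product polynomial of partial degree $p$ restricts to the simplex as a polynomial of total degree up to $\dim\,p$, overshooting $\calP_p$ when the latter is taken to be the total-degree space; the same mismatch arises on the cube whenever $\calP_p$ is the total-degree space. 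To stay within $\calP_p$ one must instead employ the total-degree orthogonal projection, obtained by truncating the multivariate Legendre series to multi-indices $\alpha$ with $\abs{\alpha}\leq p$. This truncation remains an $L^2$-contraction, but establishing the optimal rate $(p+1)^{-(s-j)}$ for it on the isotropic space $H^s$, rather than in the clean anisotropic tensor-product setting, is the technically heaviest step, and the one I expect to require the most care.
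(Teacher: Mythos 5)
Your one-dimensional building block is where the argument breaks down. The $L^2(-1,1)$-orthogonal projection onto $\calP_p$ (truncation of the Legendre series) does satisfy \eqref{eq:babuskasuri_l2_stability} with $C=1$, but it does \emph{not} satisfy \eqref{eq:babuskasuri_approximation} for $j\geq 1$: the classical estimate of Canuto and Quarteroni for the Legendre truncation $P_p$ gives $\norm{u-P_p u}_{H^j(-1,1)}\leq C\, p^{\,2j-1/2-s}\norm{u}_{H^s(-1,1)}$ for $j\geq 1$, and this rate is sharp, so the operator is suboptimal by a positive power of $p$ relative to the rate $p^{-(s-j)}$ asserted in the theorem. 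The loss is intrinsic to the operator rather than to the method of estimation --- it originates in the fact that the $L^2$-normalised Legendre polynomials satisfy $\norm{\tilde{L}_k'}_{L^2(-1,1)}\simeq k^{3/2}$ rather than $k$ --- so the coefficient-decay analysis you propose for $j\geq1$ can only reproduce the suboptimal rate, not repair it. A related defect then infects your tensorisation step, which requires uniform $H^j$-stability of the one-dimensional operator; this too fails for the $L^2$ projection. The entire point of the Babu\v{s}ka--Suri construction, which the paper follows, is to avoid orthogonal projection onto polynomials altogether: one extends $u$ by a Stein extension to a function compactly supported in a neighbourhood of $Q$, composes with $\xi\mapsto(2\sin\xi_1,\dots,2\sin\xi_\dim)$ to obtain a periodic function $V$, and truncates the \emph{Fourier} series of $V$. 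Fourier truncation on the torus is simultaneously optimal in every Sobolev norm, and Lemma~\ref{lem:polynomial_transformation} is invoked precisely to show that the truncated series pulls back to an algebraic polynomial of partial degree at most $p$. The resulting operator is not a projection --- the paper even proves it is inexact on constants --- which is the structural feature your proposal misses.

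Your two remaining concerns are real but are resolved in the paper far more cheaply than you anticipate. The simplex needs no separate extension-plus-restriction device: the Stein extension applies to any Lipschitz domain, so the construction above directly produces a polynomial of partial degree at most $p$ on a cube containing $Q$, whose restriction to the simplex is admissible. And the total-degree case, which you single out as the technically heaviest step, is dispatched by simply running the construction with degree $k$ satisfying $k\leq p/\dim\leq k+1$, since a polynomial of partial degree $k$ has total degree at most $\dim k\leq p$; this costs only a constant depending on $\dim$ in \eqref{eq:babuskasuri_approximation} and entirely avoids any analysis of the total-degree orthogonal projection.
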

\begin{proof}
Our proof is similar to the one given in \cite{Babuvska1987a}, except that we also show that generally $u\neq \pj^p u$, even if $u\in \calP_p$, contrary to what is claimed in \cite{Babuvska1987}.
First, we momentarily assume that $\calP_p$ denotes the space of polynomials of partial degree at most $p$.
Since $Q$ is a Lipschitz domain, the Stein Extension Theorem \cite{Adams2003} shows that there exists a linear total extension operator $E\colon L^2(Q)\tends L^2(\R^\dim)$, such that, for each nonnegative integer $s$, $\norm{E u}_{H^s(\R^\dim)}\lesssim \norm{u}_{H^s(Q)}$ for all $u\in H^s(Q)$. 
For $\rho\in \R_{>0}$, let $Q(\rho)\coloneqq [-\rho,\rho]^\dim$.
Without loss of generality, we may assume that $\supp Eu \subset Q(3/2)$ for every $u\in L^2(Q)$. Let $\Phi$ be the diffeomorphism from $Q(\pi/2)$ to $Q(2)$ defined by $\Phi(\xi)\coloneqq (2\sin \xi_1,\dots,2\sin \xi_\dim)$. 
For $u\in L^2(Q)$, let $V(\xi)\coloneqq Eu(\Phi(\xi))$ for $\xi \in \R^\dim$.
It follows that $V$ is a $2\pi$-periodic function that is symmetric about each hyperplane $\xi_i = \pm \pi/2$, i.e.\ for any $\xi\in\R^\dim$ such that $\xi_i=\pm \pi/2$ and any $\delta \in\R$, we have $V(\xi+\delta\,e_i ) = V(\xi-\delta\,e_i)$, where $e_i$ is the $i$-th unit vector.
Since $\supp Eu \subset Q(3/2)$, we may use the symmetry of $V$ to show that, for any integer $s\geq 0$ and any $u\in H^s(Q)$, we have $\norm{V}_{H^s(Q(\pi))}^2=2^\dim \norm{V}_{H^s(Q(\pi/2))}^2 = 2^\dim \norm{V}_{H^s(\Phi^{-1}(Q(3/2)))}^2$, and therefore we deduce that $\norm{V}_{H^s(Q(\pi))}\lesssim \norm{u}_{H^s(Q)}$ for all $u\in H^s(Q)$ and all integers $s\geq 0$.
The function $V$ admits the Fourier expansion $V = \sum_{k\in\Z^\dim} a_k \,\e^{\im \,k\cdot \xi}$, where the coefficients $a_k\in \C$ satisfy $ \overline{a_k} = a_{-k}$, for each $k\in\Z^\dim$, because $V$ is real-valued.
For an integer $p\geq 0$, define the trigonometric polynomial $V_p$ by $V_p(\xi) \coloneqq \sum_{\abs{k}_{\infty}\leq p} a_k\, \rm{e}^{\im \,k\cdot \xi}$. The relation $\overline{a_k}=a_{-k}$ shows that
\[
V_p(\xi) = a_0 + \sum_{\substack{k\in\N^\dim\setminus\{0\} \\ \abs{k}_\infty \leq p}} \half (a_k+\overline{a_k})(\e^{\im\, k\cdot \xi} + \e^{-\im \,k \cdot \xi}) + \half (a_k - \overline{a_k})(\e^{\im \,k\cdot \xi} - \e^{-\im\, k \cdot \xi}),
\]
thus implying that $V_p$ is real-valued. For any integers $j\leq s$, and any $u\in H^s(Q)$,
\begin{equation}\label{eq:vp_approximation}
\begin{split}
\abs{V-V_p}_{H^j(Q(\pi))}^2 &\lesssim \sum_{\abs{k}_{\infty}>p} \abs{k}_\infty^{2j} \abs{a_k}^2
\lesssim (p+1)^{-2(s-j)} \sum_{k\in\Z^\dim} \abs{k}_{\infty}^{2s} \abs{a_k}^2
\\ &\lesssim  (p+1)^{-2(s-j)}\abs{V}_{H^s(Q(\pi))}^2 \lesssim  (p+1)^{-2(s-j)}\norm{u}_{H^s(Q)}^2,
\end{split}
\end{equation}
where the constants are independent of $u$ and $p$.

Define the linear map $\pj^p\colon L^2(Q)\tends L^2_{\rm{loc}}(Q(2))$ by $\pj^p u  \coloneqq V_p\circ \Phi^{-1}$. Since the mapping $\Phi\colon Q(\pi/2) \tends Q(2)$ is a diffeomorphism, and since $Q$ is compactly contained in $Q(2)$, we find that $\norm{\pj^p u}_{L^2(Q)}^2 \lesssim \norm{V_p}_{L^2(Q(\pi/2))}^2 \leq \norm{V}_{L^2(Q(\pi))}^2 \lesssim \norm{u}_{L^2(Q)}^2$ for any $u\in L^2(Q)$, where the constants are independent of $u$ and $p$, thus giving \eqref{eq:babuskasuri_l2_stability}.
Likewise, \eqref{eq:babuskasuri_approximation} follows from \eqref{eq:vp_approximation} and from $\norm{u-\pj^p u}_{H^j(Q)} \lesssim \norm{V-V_p}_{H^j(Q(\pi))}$.

In order to show that $\pj^p u$ is a polynomial of partial degree at most $p$, it is enough to show that the univariate functions $x_i \mapsto \pj^p u(x_1,\dots,x_i,\dots,x_\dim)$ are polynomials of degree at most $p$, for each $x\in Q(2)$. However, this follows from Lemma~\ref{lem:polynomial_transformation} because the trigonometric polynomial $V_p = \pj^p u \circ \Phi$ has partial degree at most $p$.

We now show that $\pj^p$ is \emph{inexact} when applied to polynomials: in general, $u\neq \pj^p u$ is possible for $u\in \calP_p$. To show this, consider the special case where $\dim=1$ and $u\equiv 1$. Since $Eu$ is compactly supported on $Q(3/2)$ and is not identically zero, $Eu$ is necessarily not a polynomial of finite degree on $Q(2)$. Since $V(\xi)=Eu(2\sin \xi)$, Lemma~\ref{lem:polynomial_transformation} shows that $V$ is not a trigonometric polynomial of finite degree, and we also have $\norm{V-1}_{L^2(Q(\pi))}>0$. By convergence of Fourier series, there exists $p_0\geq 0$ such that for all $p\geq p_0$, we have $\norm{V-V_p}_{L^2(Q(\pi))}<\half \norm{V-1}_{L^2(Q(\pi))}$, so that
\begin{equation}\label{eq:V_p_dist_1}
\norm{V_p-1}_{L^2(Q(\pi))}>\half\norm{V-1}_{L^2(Q(\pi))}>0.
\end{equation}
Since nonzero trigonometric polynomials have at most finitely many roots, $V_p$ cannot be identically equal to $1$ on any open subset of $Q(\pi)$, because otherwise $V_p$ would have to be identically equal to $1$ on $Q(\pi)$, thereby contradicting \eqref{eq:V_p_dist_1}. Therefore, $V_p\not\equiv 1\equiv V$ on $\Phi^{-1}(Q)$, and thus $u\neq \pj^p u$ on $Q$.

Now, we consider the case where $\calP_p$ denotes the space of polynomials of total degree $p$. Since the space of polynomials of partial degree $k$ is contained in $\calP_p$ whenever $k\leq p/\dim$, we may choose $k\leq p/\dim \leq k+1$, and we find that the projector $\pj^k$ defined above has the required properties.\qed
\end{proof}

We note that the polynomial inexactness of the Babu\v{s}ka--Suri projector, as defined in \textrm{\cite{Babuvska1987,Babuvska1987a}}, is independent of the choice of the extension operator, since it results from the requirement that the extended functions have compact support. This requirement is not easily avoided, since it is used to obtain the bound $\norm{V}_{H^s(Q(\pi))}\lesssim \norm{u}_{H^s(Q)}$.

\begin{lemma}\label{lem:exact_projector_reference_element}
Let $Q\subset[-1,1]^\dim$ be either the unit hypercube or the unit simplex in $\R^\dim$, $\dim\geq 1$. For each pair of nonnegative integers $p$ and $m$, there exists a linear operator $\pj^{m,p}\colon L^2(Q)\tends \calP_{p}$, the space of polynomials with partial degree at most $p$, such that $\pj^{m,p}$ has the following properties. If $u$ is a polynomial of total degree at most $\min(m,p)$, then $\pj^{m,p} u = u$. There exists a constant $C$, independent of $p$ and $m$, such that
\begin{equation}\label{eq:exact_projector_l2stab}
\norm{\pj^{m,p}u}_{L^2(Q)} \leq C\norm{u}_{L^2(Q)} \qquad\forall\,u\in L^2(Q).
\end{equation}
For any nonnegative integer $s$, there is a constant $C$, independent of $p$ but dependent on $s$ and $m$, such that for each nonnegative integer $j\leq s$,
\begin{equation}\label{eq:exact_projector_bound}
\norm{u-\pj^{m,p}u}_{H^j(Q)}\leq C (p+1)^{-(s-j)}  \sum_{r=t}^s \abs{u}_{H^r(Q)} \quad\forall\,u\in H^s(Q),
\end{equation}
where $t\coloneqq \min(s,p+1,m+1)$.
\end{lemma}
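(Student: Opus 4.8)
The plan is to build $\pj^{m,p}$ as a polynomial‑exactness correction of the Babu\v{s}ka--Suri operator $\pj^p$ of Theorem~\ref{thm:babuska_suri}. Writing $k:=\min(m,p)$ and $t:=\min(s,p+1,m+1)$ (so $t\le s$ and $t-1\le k$), I would fix a linear projection $R$ of $L^2(Q)$ onto the space of polynomials of total degree at most $k$, so that $\mathrm{ran}\,R\subseteq\calP_p$ and $R$ acts as the identity on that space, and set
\[
\pj^{m,p}u := Ru + \pj^p(u-Ru) = \pj^p u + (\mathrm{id}-\pj^p)Ru .
\]
Linearity and the mapping property $\pj^{m,p}\colon L^2(Q)\to\calP_p$ are then immediate. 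Exactness is the easiest point: if $u$ has total degree at most $k=\min(m,p)$, then $Ru=u$ and $\pj^{m,p}u=u+\pj^p(u-u)=u$, which is the reproduction property claimed in the lemma.

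For the $L^2$‑stability \eqref{eq:exact_projector_l2stab} I would use the triangle inequality together with the $L^2$‑stability \eqref{eq:babuskasuri_l2_stability} of $\pj^p$, giving $\norm{\pj^{m,p}u}_{L^2(Q)}\le (1+2C)\,\norm{R}_{L^2\to L^2}\,\norm{u}_{L^2(Q)}$, so that the constant is independent of $p$ and $m$ as soon as $R$ is bounded on $L^2(Q)$ uniformly in its degree $k$. For the approximation bound \eqref{eq:exact_projector_bound} the key identity is $\mathrm{id}-\pj^{m,p}=(\mathrm{id}-\pj^p)(\mathrm{id}-R)$, combined with the fact that $\pj^{m,p}$ reproduces every $\pi\in\calP_{t-1}$. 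Hence, for all such $\pi$,
\[
u-\pj^{m,p}u=(\mathrm{id}-\pj^p)(\mathrm{id}-R)(u-\pi),
\]
and applying \eqref{eq:babuskasuri_approximation} to the function $(\mathrm{id}-R)(u-\pi)\in H^s(Q)$ and then taking the infimum over $\pi$ yields
\[
\norm{u-\pj^{m,p}u}_{H^j(Q)}\lesssim (p+1)^{-(s-j)}\bigl(1+\norm{R}_{H^s\to H^s}\bigr)\inf_{\pi\in\calP_{t-1}}\norm{u-\pi}_{H^s(Q)}.
\]
The Deny--Lions form of the Bramble--Hilbert lemma then bounds the infimum by $\sum_{r=t}^s\abs{u}_{H^r(Q)}$, with a constant depending only on $s$ and $Q$ because $t\le s$ forces the degree $t-1$ to range over a finite set; this delivers \eqref{eq:exact_projector_bound}, again provided $R$ is bounded on $H^s(Q)$ uniformly in $k$.

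The whole argument therefore reduces both required properties to a single demand on the corrector: I need a projection $R$ onto $\calP_k$ that is uniformly (in $k$) bounded on \emph{both} $L^2(Q)$ and $H^s(Q)$. This simultaneous bi‑stability is the main obstacle, and it rules out the obvious candidates. The $L^2$‑orthogonal projection is $L^2$‑stable with constant one but is \emph{not} uniformly $H^s$‑stable: a Legendre‑coefficient computation (for instance $u=L_{k+1}-L_{k-1}$ on an interval, where $(\mathrm{id}-\Pi_k)u=L_{k+1}$ while $u$ itself enjoys strong cancellation) shows that its $H^1\!\to\!H^1$ norm grows like $k^{1/2}$, which would multiply the right‑hand side of \eqref{eq:exact_projector_bound} by a power of $p$; dually, the $H^s$‑orthogonal projection is $H^s$‑stable but not uniformly $L^2$‑stable.

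I would resolve this within the trigonometric framework already set up for $\pj^p$. Transporting to the periodic variables via $\Phi$ (Lemma~\ref{lem:polynomial_transformation}), truncation of the Fourier series at degree $k$ is an orthogonal projection in \emph{every} periodic Sobolev norm, hence is automatically bounded on each $H^r(Q)$, $0\le r\le s$, uniformly in $k$; the remaining subtlety is exactly the one noted for $\pj^p$, namely that the compactly supported extension $E$ prevents this truncation from reproducing $\calP_k$. The hard part of the proof is thus to upgrade this all‑order‑stable truncation to a genuine projection onto $\calP_k$ without losing the uniform bi‑stability, which is where the explicit polynomial--trigonometric correspondence of Lemma~\ref{lem:polynomial_transformation} must be used to handle the polynomial part exactly. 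Once such an $R$ is in hand, the three displayed steps assemble directly into \eqref{eq:exact_projector_l2stab}--\eqref{eq:exact_projector_bound}.
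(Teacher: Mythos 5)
Your construction $\pj^{m,p}u=Ru+\pj^p(u-Ru)$, the identity $\mathrm{id}-\pj^{m,p}=(\mathrm{id}-\pj^p)(\mathrm{id}-R)$, and the reduction of \eqref{eq:exact_projector_bound} to a best-approximation estimate are exactly the paper's argument. The problem is that you stop short of a complete proof at the point you call ``the hard part'': you demand a corrector $R$ that is bounded on both $L^2(Q)$ and $H^s(Q)$ \emph{uniformly in its degree} $k=\min(m,p)$, reject the $L^2$-orthogonal projection on those grounds, and then only sketch a trigonometric construction without carrying it out. That uniform-in-$k$ requirement is a phantom obstacle. Reread the statement: the constant in \eqref{eq:exact_projector_bound} is required to be independent of $p$ but is explicitly allowed to depend on $s$ \emph{and on $m$}. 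Since $k=\min(m,p)\leq m$, the degree of the corrector ranges over the finite set $\{0,\dots,m\}$; whatever growth in $k$ the $H^s$-operator norm of the $L^2$-orthogonal projection (or the Deny--Lions/Bramble--Hilbert constant for degree $k$) exhibits is therefore bounded by its maximum over $k\in\{0,\dots,m\}$, which is a constant depending only on $m$ and $s$. Meanwhile \eqref{eq:exact_projector_l2stab} only uses the $L^2\to L^2$ norm of $R$, which for the orthogonal projection is $1$ independently of everything.

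This is precisely what the paper does: it takes $R=\pj^{\min(m,p)}_{L^2}$, the $L^2$-orthogonal projection onto polynomials of total degree at most $\min(m,p)$, applies \eqref{eq:babuskasuri_approximation} to $u-\pj^{\min(m,p)}_{L^2}u$, and invokes the Bramble--Hilbert Lemma with a constant depending on $\min(m,p)$, observing that by treating the cases $p<m$ and $p\geq m$ separately this constant can be taken to depend on $m$ alone. Your own computation showing that the $H^1\to H^1$ norm of $\Pi_k$ grows like $k^{1/2}$ is correct but irrelevant here, since $k$ never exceeds $m$. If you simply instantiate $R$ as the $L^2$-orthogonal projection, every remaining step of your argument goes through verbatim and the proof is complete; the trigonometric upgrade you propose is unnecessary.
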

\begin{proof}
For nonnegative integers $m$ and $p$, let $\pj^p$ be the Babu\v{s}ka--Suri projector as given by Theorem~\ref{thm:babuska_suri}, and let $\pj^{\min(m,p)}_{L^2}\colon L^2(Q)\tends \calP_{\min(m,p)}$ denote the $L^2$ projection into the space of polynomials of total degree at most $\min(m,p)$. Then, define
\begin{equation}
\pj^{m,p} u \coloneqq \pj^{\min(m,p)}_{L^2} u + \pj^{p} \left( u-\pj^{\min(m,p)}_{L^2} u\right), \quad u\in L^2(Q).
\end{equation}
It follows that $\pj^{m,p}$ is a well-defined linear operator mapping $L^2(Q)$ into $\calP_p$. Since $\pj^p$ is a linear operator, we see that $\pj^{m,p}$ is exact on the space of polynomials of total degree at most $\min(m,p)$. To show \eqref{eq:exact_projector_l2stab}, we use the triangle inequality
\begin{equation}
\norm{\pj^{m,p}u}_{L^2(Q)}\leq \norm{\pj^{\min(m,p)}_{L^2} u}_{L^2(Q)} + \norm{\pj^p}_{L^2(Q)\tends L^2(Q)}\norm{u-\pj^{\min(m,p)}_{L^2} u}_{L^2(Q)},
\end{equation}
and we note that, by \eqref{eq:babuskasuri_l2_stability}, $\norm{\pj^p}_{L^2(Q)\tends L^2(Q)}\leq C$, with $C$ independent of $p$, and that $\norm{\pj^{\min(m,p)}_{L^2}}_{L^2(Q)\tends L^2(Q)}\leq 1$. Now, let $j\leq s$ be nonnegative integers, and apply \eqref{eq:babuskasuri_approximation} to obtain
\begin{equation}\label{eq:exact_projector_bound_1}
\norm{u-\pj^{m,p}u}_{H^j(Q)}\leq C (p+1)^{-(s-j)}\norm{u-\pj^{\min(m,p)}_{L^2} u}_{H^s(Q)} \quad\forall\,u\in H^s(Q),
\end{equation}
where $C$ is independent of $p$ and $m$ but dependent on $s$. Since $Q$ is the unit simplex or unit hypercube, the Bramble--Hilbert Lemma \cite{Brenner2008} shows that
\begin{equation}\label{eq:exact_projector_bound_2}
\norm{u-\pj^{\min(m,p)}_{L^2}u}_{H^s(Q)}\leq C \sum_{r=t}^s \abs{u}_{H^r(Q)}\quad\forall\,u\in H^s(Q),
\end{equation}
where $t\coloneqq\min(s,\min(m,p)+1)$ and $C$ depends on $s$, $\min(m,p)$ and on $Q$. Moreover, by considering seperately the cases $p<m$ and $p\geq m$, it is seen that we may choose the constant in \eqref{eq:exact_projector_bound_2} to depend only on $m$, and not on $p$. We thus obtain \eqref{eq:exact_projector_bound} by combining \eqref{eq:exact_projector_bound_1} and \eqref{eq:exact_projector_bound_2}, and noting that the constant may be chosen to be independent of $p$.
\qed
\end{proof}

\paragraph{Definition of fractional order Sobolev spaces} For a domain $K$ and a real number $s>0$ such that $s \in (r,r+1)$ for a nonnegative integer $r$, we define
\begin{equation}
H^s(K)\coloneqq \left( H^r(K), H^{r+1}(K)\right)_{s-r,2;J}.
\end{equation}
Here, we use the standard norm on $H^r(K)$ when $r$ is an integer.
It follows from the Equivalence Theorem \cite{Adams2003} that $H^s(K) = \left(H^r(K), H^{r+1}(K)\right)_{s-r,2;K}$, where the constant in the equivalence of norms depends only on $s$. Also, in view of the Re-iteration Theorem, we note that 
\begin{equation}
\left(H^r(K),H^{r+1}(K)\right)_{s-r,1;J}\hookrightarrow H^s(K) \hookrightarrow \left(H^r(K),H^{r+1}(K)\right)_{s-r,\infty;K},
\end{equation}
where the embedding constants depend only on $s$, see \cite[Thm.~7.16, Cor.~7.20]{Adams2003}. We remark that it is important in the following that these constants are independent of the domain $K$.

In the following, $a \lesssim b$ for $a,b \in \R$ means that there exists a constant $C$ such that $a \leq C\,b$, where $C$ is independent of discretisation parameters, such as the element sizes of the meshes and the polynomial degrees of finite element spaces, but otherwise possibly dependent on other fixed quantities, such as the shape-regularity parameters of the mesh, for example.

\begin{theorem}\label{thm:polynomial_approximation}
Let $\Om\subset \R^\dim$ be a bounded Lipschitz polytopal domain, and let $\{\calT_h\}_h$ be a shape-regular sequence of simplicial or parallelepipedal meshes on $\Om$. For each mesh $\calT_h$, suppose that $h=\max_{K\in\calT_h}h_K$, where $h_K\coloneqq \diam K$ for all $K\in\calT_h$. For each mesh $\calT_h$, let $\bo m= (m_K;\; K\in\calT_h)$ and $\bo p=( p_K;\; K\in\calT_h)$ be vectors of nonnegative integers. Then, there exists a sequence of linear operators $\{\pmp\}_h$, such that $\pmp \colon L^2(\Om)\tends \Vh$, with $\eval{\pmp u}{K}=\eval{u}{K}$ if $\eval{u}{K}$ is a polynomial of total degree at most $\min(m_K,p_K)$, and such that, for each $K\in\calT_h$,
\begin{equation}\label{eq:pmp_l2_stab}
\norm{\pmp u}_{L^2(K)} \lesssim \norm{u}_{L^2(K)} \qquad\forall\,u\in L^2(K).
\end{equation}
Also, for each $K\in\calT_h$, $s_K \in \R_{\geq 0}$, each nonnegative integer $j\leq s_K$ and, if $s_K>1/2$, for each multi-index $\beta$, with $\abs{\beta}<s_K-1/2$, we have
\begin{align}
\norm{u-\pmp u}_{H^j(K)} &\lesssim \frac{h_K^{t_K-j}}{(p_K+1)^{s_K-j}}\norm{u}_{H^{s_K}(K)} & &\forall\, u\in H^{s_K}(K),
\label{eq:pmp_approximation_bound}
\\ \norm{D^\beta(u-\pmp u)}_{L^2(\p K)} &\lesssim \frac{h_K^{t_K-\abs{\beta}-1/2}}{(p_K+1)^{s_K-\abs{\beta}-1/2}} \norm{u}_{H^{s_K}(K)} & &\forall \,u\in H^{s_K}(K),\label{eq:pmp_trace_bound}
\end{align}
where $t_K\coloneqq \min(s_K,p_K+1,m_K+1)$.
\end{theorem}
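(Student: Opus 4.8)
The plan is to build $\pmp$ elementwise by transplanting the reference projector of Lemma~\ref{lem:exact_projector_reference_element} through an affine map, and then to obtain every stated bound by combining the reference estimates with affine scaling and interpolation. For each $K\in\calT_h$ I fix an affine bijection $F_K\colon Q\tends K$ from the reference simplex or hypercube $Q$, with $F_K(\hat x)=B_K\hat x+b_K$; shape-regularity gives $\norm{B_K}\simeq h_K$, $\norm{B_K^{-1}}\simeq h_K^{-1}$ and $\abs{\det B_K}\simeq h_K^{\dim}$. I then set $(\pmp u)\circ F_K\coloneqq \pj^{m_K,p_K}(u\circ F_K)$ on each $K$, where $\pj^{m_K,p_K}$ is the operator of Lemma~\ref{lem:exact_projector_reference_element}. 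Since affine maps preserve total degree, the exactness of $\pj^{m_K,p_K}$ on polynomials of total degree at most $\min(m_K,p_K)$ transfers verbatim to $\pmp$, and the $L^2$-stability \eqref{eq:pmp_l2_stab} follows from \eqref{eq:exact_projector_l2stab} together with the scaling $\norm{v}_{L^2(K)}\simeq h_K^{\dim/2}\norm{v\circ F_K}_{L^2(Q)}$.

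First I would prove \eqref{eq:pmp_approximation_bound} for integer $s_K$ and integer $j\le s_K$. With $\hat u\coloneqq u\circ F_K$ and the affine scaling $\abs{v}_{H^r(K)}\simeq h_K^{\dim/2-r}\abs{v\circ F_K}_{H^r(Q)}$ valid for each integer $r$, applying \eqref{eq:exact_projector_bound} at target order $j$ on $Q$, bounding its left-hand side below by the $H^j(Q)$-seminorm, and pulling back gives
\[
\abs{u-\pmp u}_{H^j(K)}\lesssim (p_K+1)^{-(s_K-j)}\sum_{r=t_K}^{s_K} h_K^{r-j}\abs{u}_{H^r(K)},
\]
so that, since $h_K\le h\lesssim 1$ and $r\ge t_K$, the factor $h_K^{t_K-j}$ dominates; applying this for every integer $0\le j'\le j$ and summing recovers the full $H^j(K)$-norm and yields \eqref{eq:pmp_approximation_bound} at integer orders. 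To reach arbitrary real $s_K$ with $j$ still an integer, I would interpolate \emph{in the source}. The point is to carry $(p_K+1)^{-(s_K-j)}$ through the interpolation with a constant independent of $p_K$, which is done on the fixed reference element where no $h$-factors appear: regarding $\hat E\coloneqq I-\pj^{m_K,p_K}$ as a bounded operator from each of $H^{\lfloor s_K\rfloor}(Q)$ and $H^{\lceil s_K\rceil}(Q)$ into $H^j(Q)$ (both source orders are $\ge j$), the operator interpolation inequality for the $J$-method gives $\norm{\hat E}_{H^{s_K}(Q)\tends H^j(Q)}\lesssim (p_K+1)^{-(s_K-j)}$, with constant depending only on $s_K$ and $j$; crucially this interpolates the operator, never requiring a single $u$ to lie above $H^{s_K}$. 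Reintroducing the $h$-dependence by affine scaling, tracked through the $J$-functional defining $H^{s_K}(K)$, then produces the factor $h_K^{t_K-j}$, completing \eqref{eq:pmp_approximation_bound}.

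Finally, for the trace bound \eqref{eq:pmp_trace_bound} I would apply the Besov trace theorem (Theorem~\ref{thm:mesh_besov_trace_theorem}) to $D^\beta(u-\pmp u)$, which is admissible because $\abs{\beta}<s_K-1/2$ guarantees that $u-\pmp u$, which inherits the regularity of $u$ since $\pmp u$ is polynomial, satisfies $D^\beta(u-\pmp u)\in B^{1/2}_{2,1}(K)$. This gives
\[
\norm{D^\beta(u-\pmp u)}_{L^2(\p K)}\lesssim \norm{D^\beta(u-\pmp u)}_{B^{1/2}_{2,1}(K)}+h_K^{-1/2}\norm{D^\beta(u-\pmp u)}_{L^2(K)}.
\]
The low-order term is handled by \eqref{eq:pmp_approximation_bound} with $j=\abs{\beta}$: its $h$-power is exactly $h_K^{t_K-\abs{\beta}-1/2}$ and its $p$-power is better than required, so it is dominated. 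The Besov term I would bound by \emph{simultaneously} interpolating, on the reference element, the two integer-order estimates of $\hat E$ at target orders $\abs{\beta}$ and $\abs{\beta}+1$: choosing the auxiliary source exponents so that the source interpolates to $H^{s_K}$ while the target interpolates, with fine index one, to the half-integer Besov level, one obtains the half-order shift $(p_K+1)^{-(s_K-\abs{\beta}-1/2)}$. The hypothesis $\abs{\beta}<s_K-1/2$ is precisely what keeps the lower auxiliary source exponent at least $\abs{\beta}$, so that the target-order-$\abs{\beta}$ estimate remains available; affine scaling of the $B^{1/2}_{2,1}$-functional then supplies the matching power $h_K^{t_K-\abs{\beta}-1/2}$.

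The main obstacle throughout is keeping every interpolation constant independent of $h_K$ and $p_K$. This is delicate because $\pmp$ is only $L^2$-stable and is \emph{not} $H^{s_K}$-stable uniformly in $p_K$ (high-degree polynomials have $p$-growing high-order Sobolev norms), so one cannot interpolate naively using a stability bound at the top endpoint. Instead all interpolation must be anchored on the fixed reference element at integer orders, where $p_K$ is the only surviving parameter, and the $h_K$-dependence is reintroduced afterwards by affine scaling whose effect on the fractional $H^{s_K}$- and $B^{1/2}_{2,1}$-functionals must be tracked explicitly. The simultaneous source-and-target interpolation required for the trace estimate, together with this scaling bookkeeping, is the technically most demanding step.
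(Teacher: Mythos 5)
Your construction and your treatment of \eqref{eq:pmp_l2_stab} and \eqref{eq:pmp_approximation_bound} follow the paper's argument in all essentials: pull back to the reference element, apply Lemma~\ref{lem:exact_projector_reference_element}, absorb the sum over $r\geq t_K$ into the lowest power of $h_K$, and reach real $s_K$ by interpolation. (The paper interpolates the two integer-order error bounds directly on $K$, exploiting that its $J$-method definition of $H^{s}(K)$ has equivalence constants independent of the domain; your variant of interpolating the operator on $Q$ and rescaling afterwards is workable, but the step you defer to ``scaling bookkeeping'' --- recovering $h_K^{t_K-j}$ rather than $h_K^{-j}$ when the pulled-back fractional source norm is expressed on $K$ --- is precisely where the polynomial exactness must be re-invoked, so nothing is gained over interpolating on $K$ directly.)

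The trace bound is where your proposal has a genuine gap. First, the obstacle you identify is not one: while $\pj^{m,p}$ itself is not uniformly $H^s$-bounded in $p$, the error operator $I-\pj^{m,p}$ is, by \eqref{eq:exact_projector_bound} with $j=s$, and consequently $\norm{u-\pmp u}_{H^{s_K}(K)}\lesssim\norm{u}_{H^{s_K}(K)}$ is available and is exactly the top-endpoint estimate the paper uses. Second, your ``simultaneous source-and-target'' operator interpolation cannot reach general real $s_K$: to land in $B^{\abs{\beta}+1/2}_{2,1}(Q)=\bigl(H^{\abs{\beta}}(Q),H^{\abs{\beta}+1}(Q)\bigr)_{\theta,1}$ you are forced to take $\theta=1/2$, and then the source couple $\bigl(H^{\sigma_0}(Q),H^{\sigma_1}(Q)\bigr)_{1/2,2}$ with integer $\sigma_0,\sigma_1$ only produces $H^{s_K}(Q)$ when $2s_K\in\Z$; enlarging the gap between the target orders does not help, since the target fine index pins $\theta$. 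The paper's route avoids both issues: it first disposes of the case $\abs{\beta}\leq s_K-1$ with the multiplicative trace inequality \eqref{eq:mesh_mult_trace_ineq}, and for $s_K-\abs{\beta}\in(1/2,1)$ it applies Theorem~\ref{thm:mesh_besov_trace_theorem} and then interpolates the single function $D^\beta(u-\pmp u)$ between $L^2(K)$, where the full error bound \eqref{eq:pmp_approximation_bound} with $j=\abs{\beta}$ holds, and $H^{s_K-\abs{\beta}}(K)$, where the error is merely bounded, using the Re-iteration Theorem to identify $\bigl(L^2(K),H^{s_K-\abs{\beta}}(K)\bigr)_{\lambda,1;J}=B^{1/2}_{2,1}(K)$ with $\lambda=1/(2(s_K-\abs{\beta}))$; the elementary inequality $(t_K-\abs{\beta})(1-\lambda)\geq t_K-\abs{\beta}-1/2$ then yields exactly the powers in \eqref{eq:pmp_trace_bound}. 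You would need to replace your two-couple interpolation by this one-function argument (or an equivalent device) for the bound to hold for all real $s_K>\abs{\beta}+1/2$.
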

\begin{proof}
Since the meshes $\{\calT_h\}$ consist of simplices or parallelepipeds, each element $K$ is affine-equivalent to the unit simplex or unit hypercube, with a corresponding affine mapping $F_K\colon K\tends Q$. For each $K\in\calT_h$, define $\hat{u}=u\circ F_K^{-1}$ and $\eval{\pmp u}{K}=\left(\pj^{m_K,p_K} \hat{u}\right)\circ F_K \in \calP_{p_K}$,
where $\pj^{m_K,p_K}$ is the operator given by Lemma~\ref{lem:exact_projector_reference_element}.
The stability bound \eqref{eq:pmp_l2_stab} then follows from the shape-regularity of the mesh and from the bound \eqref{eq:exact_projector_l2stab} of Lemma~\ref{lem:exact_projector_reference_element}.
Also, for any nonnegative integers $j\leq s$, we have
\begin{equation}\label{eq:pmp_bound_1}
\norm{u-\pmp u}_{H^j(K)} \lesssim \frac{h_K^{t_K-j}}{(p_K+1)^{s_K-j}} \norm{u}_{H^{s_K}(K)}\quad\forall\,u\in H^{s_K}(K),
\end{equation}
where $t_K=\min(s_K,p_K+1,m_K+1)$ and where the constant depends only on $s_K$, $m_K$, on $\max h$ the maximum mesh size over all meshes, on the reference element and on the shape-regularity of $\{\calT_h\}$.
We remark that the additional dependence on $\max h$ stems from the fact that we use the bound $h_K^{t_K-i}\leq  \max h^{j-i}\, h_K^{t_K-j}$, $i\leq j$, to obtain \eqref{eq:pmp_bound_1}.
The Exact Interpolation Theorem~\cite{Adams2003} shows that \eqref{eq:pmp_bound_1} extends to each nonnegative integer $j$ and each nonnegative real number $s_K$ such that $j\leq s_K$, thus giving \eqref{eq:pmp_approximation_bound}.

We now show \eqref{eq:pmp_trace_bound}. Let $s_K>1/2$ and $\beta$ be a multi-index with $\abs{\beta}<s_K-1/2$. First, consider the case where $\abs{\beta}\leq s_K-1$. Then, \eqref{eq:pmp_trace_bound} follows from \eqref{eq:pmp_approximation_bound} and from the multiplicative trace inequality \eqref{eq:mesh_mult_trace_ineq}.
Now, consider the case where $s_K-\abs{\beta}\in (\half,1)$. Theorem~\ref{thm:mesh_besov_trace_theorem} shows that, for any $u\in H^{s_K}(K)$,
\[
\norm{D^{\beta}(u-\pmp u)}_{L^2(\p K)}\lesssim \norm{D^{\beta}( u-\pmp u)}_{B^{1/2}_{2,1}(K)}+h_K^{-1/2} \norm{u-\pmp u}_{H^{\abs{\beta}}(K)}.
\]
Given \eqref{eq:pmp_approximation_bound} for the case $j=\abs{\beta}$, we can obtain \eqref{eq:pmp_trace_bound} provided that we can show that, for any $u\in H^{s_K}(K)$,
\begin{equation}\label{eq:pmp_bound_2}
\norm{D^{\beta}(u-\pmp u)}_{B^{1/2}_{2,1}(K)}\lesssim \frac{h_K^{t_K-\abs{\beta}-1/2}}{(p_K+1)^{s_K-\abs{\beta}-1/2}} \norm{u}_{H^{s_K}(K)}.
\end{equation}
The Exact Interpolation Theorem and \eqref{eq:pmp_bound_1} show that $\norm{u-\pmp u}_{H^{s_K}(K)}\lesssim \norm{u}_{H^{s_K}(K)}$ for any $u\in H^{s_K}(K)$.
The Re-iteration Theorem \cite{Adams2003} shows that
\[
B^{1/2}_{2,1}(K)=\left(L^2(K),H^{s_K-\abs{\beta}}(K)\right)_{\lambda,1;J},
\]
where $\lambda \coloneqq \tfrac{1}{2(s_K-\abs{\beta})}$, and where the constant in the equivalence of norms depends only on $s_K-\abs{\beta}$. Therefore, for any $u\in H^{s_K}(K)$, there holds
\[
\norm{D^\beta(u-\pmp u)}_{B^{1/2}_{2,1}(K)} \lesssim \left( \frac{h_K^{t_K-\abs{\beta}}}{(p_K+1)^{s_K-\abs{\beta}}}\right)^{1-\lambda} \norm{u}_{H^{s_K}(K)}.
\]
Since $t_K\leq s_K$, we have $(t_K-\abs{\beta})(1-\lambda)\geq t_K-\abs{\beta}-1/2$, and therefore we deduce \eqref{eq:pmp_bound_2} and \eqref{eq:pmp_trace_bound}.
\qed
\end{proof}

\subsection{Polynomial approximation in Bochner spaces}\label{sec:bochner_approximation}
To simplify the notation in the following approximation results, let the spaces $\{\Xell\}_{\ell=0}^2$ be defined by
\begin{equation*}
\begin{aligned}
X_0&\coloneqq L^2(\Om),
& X_1&\coloneqq H^1_0(\Om),
& X_2 &\coloneqq H=H^2(\Om)\cap H^1_0(\Om).
\end{aligned}
\end{equation*}
The approximation theory for Sobolev spaces can be extended to Bochner spaces as follows.
\begin{lemma}\label{lem:bochner_eigenfunction_decomp}
Let $I$ be an open interval and let $\Om\subset\R^\dim$ be a bounded convex domain. Let $\{\psi_k\}_{k=1}^\infty \subset H\coloneqq H^2(\Om)\cap H^1_0(\Om)$ be an orthonormal basis of $L^2(\Om)$, such that $\{\psi_k\}_{k=1}^\infty$ is also an orthogonal basis of $H^1_0(\Om)$ and of $H$, which satisfies
\[
\begin{aligned}
\int_\Om \psi_k\,\psi_j \,\d x &= \delta_{kj},
&\int_\Om \nabla \psi_k \cdot \nabla \psi_j\,\d x & = \lambda_k \,\delta_{kj},
&\int_\Om \Delta \psi_k \, \Delta \psi_j \, \d x & = \lambda_k^2\, \delta_{kj},
\end{aligned}
\]
where $\lambda_k > 0$ for eack $k\in \N$. Then, for any $\ell\in\{0,1,2\}$, and any $u\in L^2(I;\Xell)$, we have $u=\sum_{k=1}^{\infty} u_k \,\psi_k$, where $u_k(t)\coloneqq  \pair{u(t)}{\psi_k}_{L^2(\Om)}$, and where the series converges in $L^2(I;\Xell)$.
For any integer $s\geq 0$, any $u\in H^s(I;\Xell)$, we have the generalised Parseval Identity
\begin{equation}\label{eq:bochner_eigenfunction_norms}
\abs{u}_{H^s(I;\Xell)}^2 = \sum_{k=1}^\infty \lambda_{k}^\ell \,\abs{u_k}_{H^s(I)}^2.
\end{equation}
\end{lemma}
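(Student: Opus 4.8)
The plan is to use that $\{\psi_k\}_{k=1}^\infty$ is simultaneously an orthogonal basis of each space $\Xell$, $\ell\in\{0,1,2\}$, and then to lift this spatial spectral decomposition to the Bochner setting. First I would record the precise orthogonal structure: writing $v_j\coloneqq\pair{v}{\psi_j}_{L^2(\Om)}$, the stated identities show that equipping $\Xell$ with the inner product $\pair{v}{w}_{\Xell}\coloneqq\sum_j\lambda_j^\ell v_j w_j$ recovers the norms $\norm{\cdot}_{L^2(\Om)}$, $\abs{\cdot}_{H^1(\Om)}$ and $\norm{\Delta\,\cdot}_{L^2(\Om)}$ for $\ell=0,1,2$ respectively, and makes $\{\psi_k\}$ orthogonal with $\norm{\psi_k}_{\Xell}^2=\lambda_k^\ell$. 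The key consistency to note is that the expansion coefficient of $v\in\Xell$ relative to $\psi_k$ computed in the $\Xell$-inner product equals the $L^2(\Om)$-coefficient $v_k$, since $\pair{v}{\psi_k}_{\Xell}=\lambda_k^\ell v_k$ and $\norm{\psi_k}_{\Xell}^2=\lambda_k^\ell$. Consequently, for a.e.\ $t\in I$ one has $u(t)=\sum_k u_k(t)\psi_k$ in $\Xell$, together with the pointwise Parseval identity $\norm{u(t)}_{\Xell}^2=\sum_k\lambda_k^\ell\abs{u_k(t)}^2$.

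To establish convergence of the series and the case $s=0$ of \eqref{eq:bochner_eigenfunction_norms}, I would set $S_N\coloneqq\sum_{k=1}^N u_k\psi_k$ and compute
\[
\norm{u-S_N}_{L^2(I;\Xell)}^2=\int_I\sum_{k>N}\lambda_k^\ell\abs{u_k(t)}^2\,\d t.
\]
The integrand decreases monotonically to $0$ for a.e.\ $t$ and is dominated by $\norm{u(t)}_{\Xell}^2\in L^1(I)$, so the monotone (equivalently, dominated) convergence theorem yields $\norm{u-S_N}_{L^2(I;\Xell)}\to 0$; this proves convergence in $L^2(I;\Xell)$. The same interchange of summation and time integration, justified by Tonelli's theorem since all terms are nonnegative, gives $\norm{u}_{L^2(I;\Xell)}^2=\sum_k\lambda_k^\ell\norm{u_k}_{L^2(I)}^2$, which is \eqref{eq:bochner_eigenfunction_norms} for $s=0$.

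For general integer $s\geq 0$ the strategy is to reduce to the case $s=0$ applied to $\p_t^s u$. The crux is the commutation identity $(\p_t^s u)_k=\p_t^s u_k$, that is, extracting the $k$-th coefficient commutes with the weak time-derivative. For each fixed $k$, the functional $\phi_k\colon v\mapsto\pair{v}{\psi_k}_{L^2(\Om)}=v_k$ is bounded on $\Xell$, since $\abs{v_k}=\lambda_k^{-\ell}\abs{\pair{v}{\psi_k}_{\Xell}}\leq\lambda_k^{-\ell/2}\norm{v}_{\Xell}$. Testing the definition of the weak Bochner derivative against scalar test functions $\varphi\in C_c^\infty(I)$ and using that $\phi_k$ is linear and continuous, hence commutes with Bochner integration, shows that $u_k=\phi_k\circ u\in H^s(I)$ with $\p_t^j u_k=\phi_k(\p_t^j u)=(\p_t^j u)_k$ for $0\leq j\leq s$. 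Since $u\in H^s(I;\Xell)$ gives $\p_t^s u\in L^2(I;\Xell)$, applying the already-proven $s=0$ identity to $\p_t^s u$ yields
\[
\abs{u}_{H^s(I;\Xell)}^2=\norm{\p_t^s u}_{L^2(I;\Xell)}^2=\sum_k\lambda_k^\ell\norm{(\p_t^s u)_k}_{L^2(I)}^2=\sum_k\lambda_k^\ell\abs{u_k}_{H^s(I)}^2.
\]
The only place where genuine care is required is the commutation $(\p_t^s u)_k=\p_t^s u_k$; the remainder is bookkeeping with the spectral expansion together with the monotone and dominated convergence theorems.
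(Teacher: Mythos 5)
Your proof is correct and follows essentially the same route as the paper's: establish the $s=0$ case from the stated orthogonality relations and the convergence of the partial sums $\sum_{k\leq N}u_k\psi_k$, then reduce integer $s\geq 1$ to $s=0$ by showing that extraction of the $k$-th coefficient commutes with the weak time derivative (tested against $\varphi\in C_c^\infty(I)$). The only cosmetic difference is in the $s=0$ step, where you identify the limit via the pointwise-in-$t$ Parseval identity in $\Xell$ together with dominated convergence, whereas the paper shows the partial sums are Cauchy in $L^2(I;\Xell)$ via the Bessel inequality and identifies the limit through an a.e.\ convergent subsequence and completeness of $\{\psi_k\}$ in $L^2(\Om)$; the two arguments are equivalent.
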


\begin{proof}
Let $\ell\in\{0,1,2\}$ and let the function $u\in L^2(I;\Xell)$. Then, $u_k$ defined above is a measurable real-valued function, and $\norm{u_k(t)}_{L^2(I)}\leq \norm{u}_{L^2(I;X_0)}$ for each $k\in \N$. For each $m\in\N$, define the function $v_m\in L^2(I;X_2)$ by
$v_m\coloneqq \sum_{k=1}^m u_k\,\psi_k$. Then, orthogonality of the $\{\psi_k\}_{k=1}^{\infty}$ in $\Xell$ implies the Bessel Inequality $\sum_{k=1}^m \lambda_{k}^\ell\, \norm{u_k}_{L^2(I)}^2= \norm{v_m}_{L^2(I;\Xell)}^2 \leq \norm{u}_{L^2(I;\Xell)}^2$.
It can then be shown that $\{v_m\}_{m=1}^{\infty}$ is a Cauchy sequence in $L^2(I;\Xell)$, with limit denoted by $v$. Moreover, there exists a subsequence of $\{v_m\}_{m=1}^{\infty}$ which converges to $v$ in $\Xell$ pointwise almost everywhere on $I$. Thus, it follows from the definition of the functions $v_m$ that $\pair{v(t)}{\psi_k}_{L^2(\Om)}=u_k(t)=\pair{u(t)}{\psi_k}_{L^2(\Om)}$ for each $k\in \N$, for a.e.\ $t\in I$, which shows that $v=u$, since $\{\psi_k\}_{k=1}^\infty$ is an orthonormal basis of $L^2(\Om)$. This proves that $u=\sum_{k=1}^{\infty} u_k \,\psi_k$ and shows Parseval's Identity \eqref{eq:bochner_eigenfunction_norms} for the case $s=0$.
Now, let $s\geq 1$ be an integer, and suppose $u\in H^s(I;\Xell)$ for some $\ell\in\{0,1,2\}$. Let $\phi\in C^{\infty}_0(I)$, and compute $\int_I u_k \,\p_t^{s} \phi \, \d t =  \int_{I} \pair{u}{\p_t^{s}(\phi\,\psi_k)}_{L^2(\Om)}\,\d t= (-1)^s\int_{I}\pair{\p_t^{s} u}{\psi_k}_{L^2(\Om)} \phi\,\d t$.
Therefore, the weak derivative $\p_t^s u_k $ exists in $L^2(I)$ and $\p_t^{s} u_k = \pair{ \p_t^{s} u}{\psi_k}_{L^2(\Om)}$. So, the generalised Parseval Identity \eqref{eq:bochner_eigenfunction_norms} for integer $s\geq 1$ is found by applying \eqref{eq:bochner_eigenfunction_norms} for $s=0$ to the function $\p_t^s u$.\qed
\end{proof}

Recall that for a Banach space $X$ and a nonnegative integer $q$, the space of univariate $X$-valued polynomials of degree at most $q$ is denoted by $\calQ_q(X)$.

\begin{lemma}\label{lem:general_bochner_approximation}
Let $\Om\subset \R^\dim$ be a bounded convex domain, let $I$ be an open interval of length $\tau_0$, and let $r$ and $q$ be nonnegative integers. Then, for each open interval $J\subset I$ of length $\tau\leq \tau_0$, there exists a linear operator $\ptau$ defined on $L^2(J;L^2(\Om))$ with the following properties. The operator $\ptau \colon L^2(J;\Xell)\tends \calQ_q(\Xell)$ for each $\ell\in\{0,1,2\}$, with $\ptau u=u$ if $u\in \calQ_{\min(r,q)}(\Xell)$. Furthermore,
\begin{equation}\label{eq:bochner_approx_l2_stab}
\norm{\ptau u}_{L^2(J;\Xell)} \lesssim \norm{u}_{L^2(J;\Xell)} \quad\forall\,u\in L^2(J;\Xell),
\end{equation}
where the constant is independent of all quantities.
For any real $\sigma\geq 0$ and any nonnegative integer $j\leq \sigma$,
\begin{equation}\label{eq:bochner_approx_bound}
\abs{u-\ptau u}_{H^j(J;\Xell)}\lesssim \frac{\tau^{\varrho - j}}{(q+1)^{\sigma-j}} \norm{u}_{H^{\sigma}(J;\Xell)}\quad\forall\,u\in H^{\sigma}(J;\Xell),
\end{equation}
where $\varrho\coloneqq \min(\sigma,r+1,q+1)$, and where the constant depends only on $\tau_0$, $\sigma$ and $r$.
\end{lemma}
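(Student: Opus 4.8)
The plan is to build $\ptau$ from a scalar univariate polynomial projector applied coefficientwise in the eigenfunction expansion supplied by Lemma~\ref{lem:bochner_eigenfunction_decomp}. The decisive point is that the generalised Parseval Identity \eqref{eq:bochner_eigenfunction_norms} turns every integer-order Bochner--Sobolev (semi)norm into a weighted $\ell^2$-sum of scalar temporal (semi)norms, with weights $\lambda_k^\ell$ that are common to all coefficients; consequently a single scalar temporal projector, applied to each coefficient $u_k$, automatically inherits stability and approximation estimates in each of the three spaces $X_0$, $X_1$ and $X_2$ simultaneously.

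Concretely, let $\{\psi_k\}_{k=1}^\infty$ be the basis of Lemma~\ref{lem:bochner_eigenfunction_decomp}, and for $u\in L^2(J;\Xell)$ write $u_k(t)\coloneqq\pair{u(t)}{\psi_k}_{L^2(\Om)}$. Applying Lemma~\ref{lem:exact_projector_reference_element} in the case $\dim=1$, $m=r$, $p=q$ on the reference interval and transforming by the affine map onto $J$, I obtain a scalar operator $\pi_\tau\colon L^2(J)\tends\calQ_q(\R)$ that is exact on $\calQ_{\min(r,q)}(\R)$, is $L^2(J)$-stable with a constant independent of $\tau$ and $q$, and satisfies, for integer $\sigma\geq 0$ and integer $j\leq\sigma$,
\begin{equation*}
\norm{v-\pi_\tau v}_{H^j(J)}\lesssim \frac{\tau^{\varrho-j}}{(q+1)^{\sigma-j}}\norm{v}_{H^\sigma(J)},\qquad \varrho=\min(\sigma,r+1,q+1),
\end{equation*}
the $\tau$-powers arising from the usual scaling of the reference estimate \eqref{eq:exact_projector_bound}, exactly as in the proof of Theorem~\ref{thm:polynomial_approximation}. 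I then set $\ptau u\coloneqq\sum_{k=1}^\infty (\pi_\tau u_k)\,\psi_k$. The $L^2(J)$-stability of $\pi_\tau$ together with Parseval's Identity for $s=0$ shows that the partial sums are Cauchy in $L^2(J;\Xell)$ whenever $u\in L^2(J;\Xell)$, so $\ptau$ is well defined, linear, and maps into the closed subspace $\calQ_q(\Xell)$; the same computation yields the stability bound \eqref{eq:bochner_approx_l2_stab}. Exactness on $\calQ_{\min(r,q)}(\Xell)$ is immediate, since for such $u$ each coefficient $u_k$ is a scalar polynomial of degree at most $\min(r,q)$, whence $\pi_\tau u_k=u_k$ and $\ptau u=u$.

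For the approximation bound I would first treat integer $\sigma$. Summing the scalar estimate against the weights $\lambda_k^\ell$ and invoking \eqref{eq:bochner_eigenfunction_norms} twice gives
\begin{equation*}
\abs{u-\ptau u}_{H^j(J;\Xell)}^2=\sum_{k}\lambda_k^\ell\,\abs{u_k-\pi_\tau u_k}_{H^j(J)}^2\lesssim \frac{\tau^{2(\varrho-j)}}{(q+1)^{2(\sigma-j)}}\sum_k\lambda_k^\ell\norm{u_k}_{H^\sigma(J)}^2=\frac{\tau^{2(\varrho-j)}}{(q+1)^{2(\sigma-j)}}\norm{u}_{H^\sigma(J;\Xell)}^2,
\end{equation*}
which is \eqref{eq:bochner_approx_bound} for integer $\sigma$.

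The main remaining obstacle is the fractional-order case, and this I would handle by real interpolation: regarding $I-\ptau$ as a bounded operator from $H^{\sigma_i}(J;\Xell)$ into $H^j(J;\Xell)$ at the consecutive integers $\sigma_0=\lfloor\sigma\rfloor$ and $\sigma_1=\lceil\sigma\rceil$ (both $\geq j$, since $j$ is an integer with $j\leq\sigma$), and using that $H^\sigma(J;\Xell)$ is the corresponding real interpolation space, the Exact Interpolation Theorem produces the geometric mean of the two endpoint bounds. The $(q+1)$-exponent interpolates linearly to $-(\sigma-j)$, while a short case analysis, according to whether $\sigma$ lies below or above the integer $\min(r+1,q+1)$, shows that the interpolated $\tau$-exponent equals exactly $\varrho-j$ with $\varrho=\min(\sigma,r+1,q+1)$. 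The only genuinely delicate points are verifying that the $\min$ defining $\varrho$ is compatible with interpolation, which is precisely why consecutive integer endpoints are used, so that the integer $\min(r+1,q+1)$ never lies strictly between $\sigma_0$ and $\sigma_1$, and confirming that the fractional Bochner--Sobolev norm is indeed the interpolation norm between its integer neighbours; both are routine once the scalar theory and the Parseval decoupling are in hand.
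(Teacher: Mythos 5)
Your proposal is correct and follows essentially the same route as the paper's proof: coefficientwise application of the reference-interval projector of Lemma~\ref{lem:exact_projector_reference_element} in the eigenfunction expansion of Lemma~\ref{lem:bochner_eigenfunction_decomp}, with Parseval transferring stability, exactness and the integer-order estimates, and the fractional case obtained by interpolation (which the paper states in one line and you spell out in slightly more detail).
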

\begin{proof}
Let $u\in L^2(J;L^2(\Om))$ and define $u_k$, $k\in\N$, as in Lemma~\ref{lem:bochner_eigenfunction_decomp}. Let $F$ denote the affine mapping from the reference element $(-1,1)$ to $J$. Then, for each $k\in\N$, define the univariate real-valued polynomial $\ptau u_k\coloneqq \left(\pj^{r,q} \hat{u}_{k}\right)\circ F^{-1}$, where $\hat{u}_{k}\coloneqq u_k\circ F$, and where $\pj^{r,q}$ is the approximation operator on the reference element given by Lemma~\ref{lem:exact_projector_reference_element} for $\dim=1$.
For each $k\in\N$, $\ptau u_k$ has degree at most $q$.
It follows from Lemma~\ref{lem:exact_projector_reference_element} that $\norm{\ptau u_k}_{L^2(J)} \lesssim \norm{u_k}_{L^2(J)}$, where the constant is independent of all other quantities.
Therefore, Lemma~\ref{lem:bochner_eigenfunction_decomp} implies that $\ptau u\coloneqq \sum_{k=1}^\infty \ptau u_k\,\psi_k $ is well-defined in $L^2(J,L^2(\Om))$.
Furthermore,  if $u\in L^2(J;\Xell)$ for some $\ell\in \{0,1,2\}$, then Lemma~\ref{lem:bochner_eigenfunction_decomp} shows that
\[
\norm{\ptau u}_{L^2(J;X)}^2=\sum_{k=1}^\infty \lambda_{k}^\ell \norm{ \ptau u_k}_{L^2(J)}^2 \lesssim \norm{u}_{L^2(J;\Xell)}^2,
\]
where the constant is independent of all quantities, thereby showing \eqref{eq:bochner_approx_l2_stab}. This also implies that $\prq \colon L^2(J;\Xell)\tends \calQ_q(\Xell)$ for each $\ell\in \{0,1,2\}$.
Moreover, if $u\in \calQ_{\min(r,q)}(\Xell)$, then $\ptau u_k = u_k$ for each $k\in\N$ by Lemma~\ref{lem:exact_projector_reference_element}, which implies that $\ptau u=u $ by Lemma~\ref{lem:bochner_eigenfunction_decomp}.

Let $j\leq \sigma$ be nonnegative integers and let $u\in H^\sigma(J;\Xell)$ for some $\ell\in\{0,1,2\}$. Then, Lemmas~\ref{lem:exact_projector_reference_element}~and~\ref{lem:bochner_eigenfunction_decomp} imply that \begin{multline}\label{eq:bochner_approx_bound_1}
\abs{u-\ptau u}_{H^{j}(J;\Xell)}^2 = \sum_{k=1}^{\infty} \lambda_{k}^\ell\,\abs{u_k-\ptau u_k}_{H^{j}(J)}^2
\\  \lesssim \sum_{\nu=\varrho}^{\sigma} \frac{\tau^{2(\nu-j)}}{(q+1)^{2(\sigma-j)}}  \sum_{k=1}^{\infty} \lambda_{k}^\ell\, \abs{u_k}_{H^\nu(J)}^2
 \lesssim \frac{\tau^{2(\varrho-j)} \max\left(1,\tau_0^{2(\sigma-\varrho)}\right)}{(q+1)^{2(\sigma-j)}} \sum_{\nu=\varrho}^{\sigma}  \abs{u}_{H^\nu(J;\Xell)}^2,
\end{multline}
where the constant depends only on $\sigma$ and $r$, thereby giving the bound \eqref{eq:bochner_approx_bound} for the case where $\sigma$ is an integer. Therefore, the bound \eqref{eq:bochner_approx_bound} for general $\sigma\in\R_{\geq 0}$ follows from \eqref{eq:bochner_approx_bound_1} and the theory of interpolation of function spaces.
\qed
\end{proof}

\begin{footnotesize}

\end{footnotesize}
\end{document}